\newcommand{\bbN}{{\mathbb N}}
\newcommand{\bbR}{{\mathbb R}}
\newcommand{\bbZ}{{\mathbb Z}}
\newcommand{\bbH}{\mathbb{H}}
\newcommand{\bbF}{\mathbb{F}}
\newcommand{\id}{\operatorname{id}}
\newcommand{\vol}{\operatorname{vol}}
\newcommand{\ind}{\operatorname{ind}}
\newcommand{\proj}{\operatorname{proj}}
\newcommand{\SL}{\operatorname{SL}}
\newcommand{\charac}{\operatorname{char}}
\newcommand{\Aut}{\operatorname{Aut}}
\newcommand{\PSL}{\operatorname{PSL}}
\newcommand{\GL}{\operatorname{GL}}
\newcommand{\SO}{\operatorname{SO}}
\newtheorem{mthm}{Theorem}
\newtheorem{theorem}{Theorem}[section]
\newtheorem{lemma}[theorem]{Lemma}
\newtheorem{corollary}[theorem]{Corollary}
\newtheorem{cor}[theorem]{Corollary}
\newtheorem{proposition}[theorem]{Proposition}
\theoremstyle{definition}
\newtheorem{example}[theorem]{Example}
\newtheorem{remark}[theorem]{Remark}
\numberwithin{equation}{section}
\newcommand{\efface}[1]{}
\pgfplotsset{compat=1.14} 
\begin{document}

\title[(Non)-escape of mass and equidistribution on trees]{(Non)-escape of mass and equidistribution for horospherical actions on trees}

\author{Corina Ciobotaru}
\author{Vladimir Finkelshtein}
\author{Cagri Sert}
\address{Department of Mathematics, University of Fribourg, Chemin du Mus\'{e}e 23, 1700 Fribourg, Switzerland}
\email{corina.ciobotaru@gmail.com}
\address{Mathematisches Institut, Georg-August-Universit\"{a}t G\"{o}ttingen, Bunsenstra\ss e 3-5, 37073 G\"{o}ttingen, Germany}
\email{filyok@gmail.com}
\address{Institut f\"{u}r Mathematik, Universit\"{a}t Z\"{u}rich, 190, Winterthurerstrasse, 8057 Z\"{u}rich, Switzerland}
\email{cagri.sert@math.uzh.ch}

\begin{abstract}
Let $G$ be a large group acting on a biregular tree $T$ and $\Gamma \leq G$ a geometrically finite lattice.
In an earlier work, the authors classified orbit closures of the action of the horospherical subgroups on $G/\Gamma$. In this article we show that there is no escape of mass and use this to prove that, in fact, dense orbits equidistribute to the Haar measure on $G/\Gamma$. On the other hand, we show that new dynamical phenomena for horospherical actions appear on quotients by non-geometrically finite lattices: we give examples of non-geometrically finite lattices where an escape of mass phenomenon occurs and where the orbital averages along a F\o lner sequence do not converge. In the last part, as a by-product of our methods, we show that projections to $\Gamma \backslash T$ of the uniform distributions on large spheres in the tree $T$ converge to a natural probability measure on $\Gamma \backslash T$. Finally, we apply this equidistribution result to a lattice point counting problem to obtain counting asymptotics with exponential error term.
\end{abstract}

\subjclass[2010]{22D40,20E08}

\maketitle

\section{Introduction}
Let $T$ be a $(d_1,d_2)$-biregular tree with $d_1,d_2\geq 3$. Denote by $\Aut(T)$ the group of automorphisms acting without edge inversion. Let $G$ be a non-compact, closed subgroup of $\Aut(T)$ acting transitively on the boundary of the tree $\partial T$. Let $\Gamma \leq G$ be a lattice and $X=G/\Gamma$. 
 
This parallels the classical setting of homogeneous dynamics, where one studies the actions of certain subgroups on a quotient of a linear algebraic group by a lattice. These two worlds intersect, for example, when $G=\SL_2(k)$, where $k$ is a non-archimedean local field, in which case $G$ naturally acts on the associated Bruhat--Tits tree. However, our geometric setting also comprises many groups $G\leq \Aut(T)$, including $\Aut(T)$ itself, that are not linear \cite{caprace-reid-willis}.


We first focus on the homogeneous space $X=G/\Gamma$, where $\Gamma$ is a geometrically finite lattice. The dynamics of discrete geodesic flow on $X$ was considered by Paulin in \cite{paulin.cont.frac}, and is related, among others, to the theory of continued fractions in non-archimedean local fields. We recall that when $G$ is linear, by works of Raghunathan and Lubotzky \cite{raghunathan.geofin,lubotzky.gafa}, any lattice therein is geometrically finite.

In our geometric setup, the role of Ad-unipotent subgroups in classical homogeneous dynamics is played by the horospherical subgroups $G_\eta^0$ of $G$, for $\eta \in \partial T$. In the earlier work \cite{CFS}, the authors classified Borel probability measures on $G/\Gamma$ invariant under $G_\eta^0$-action for large class of groups $G$ and general lattices $\Gamma$, establishing an analogue of Dani's result in \cite{dani.classification}. Moreover, it was shown that when $\Gamma$ is geometrically finite, $G_\eta^0$-orbits are either compact or dense, as in the classical result of Hedlund \cite{hedlund} on the horocycle flow on finite volume hyperbolic surfaces.

\subsection{Non-escape of mass}\label{subsec.intro.equidist}

The horospherical group $G_\eta^0$ is amenable and one can easily construct F{\o}lner sequences therein: let $a \in G$ be a hyperbolic element that has $\eta$ as its attracting fixed point on $\partial T$ and let $M$ be the compact subgroup of $G^0_\eta$ that fixes pointwise the translation axis of $a$ in $T$. Then for any $M$-invariant compact subset $O$ with non-empty interior in $G^0_\eta$, the sequence $(O_t:=a^tOa^{-t})_{t \in \mathbb{N}}$ constitutes a F{\o}lner sequence in $G^0_\eta$ (see e.g.\ \cite[Lemma 2.10]{CFS}). In the sequel, we shall refer to such sequences $O_t$ as good F{\o}lner sequences. F{\o}lner sequences allow one to average along larger and larger pieces of the orbits. For $x\in X$, we define $\nu_{x,t} = m_{O_t} * \delta_x$, where $m_{O_t}$ is the normalized restriction of the Haar measure $m_{G^0_\eta}$ to $O_t$; in other words for $f \in C_c(X)$,
\begin{equation*}
 \int_X f(y)  d\nu_{x,t}(y) = \int_{O_t} f(ux) dm_{O_t}(u).
\end{equation*}

The probability measures $\nu_{x,t}$ are called the orbital measures. 

In general, one can have a qualitative information on the statistical behaviour of the \textit{typical} points $x \in X$. This can be done using the Howe--Moore property, established in our setting in \cite{burger-mozes.product} and amenable ergodic theorem \cite{lindenstrauss}. Our topological result in \cite{CFS} says, however, that \textit{all} points $x \in X$ that do not lie in a compact $G^0_\eta$-orbit have dense orbits. Therefore, the immediate question arises whether every dense orbit equidistributes to the Haar measure on $G/\Gamma$. First possible obstruction to this is the escape of mass phenomenon. Our first result states that this does not happen when $\Gamma$ is a geometrically finite lattice. 

\begin{mthm}[Non-escape of mass]\label{thm.dm.geofin}
Let $T$ be a $(d_1,d_2)$-biregular tree, with $d_1,d_2\geq 3$, and $G$ a non-compact, closed subgroup of $\Aut(T)$ acting transitively on $\partial T$. Let $\Gamma$ be a geometrically finite lattice in $G$, $\eta \in \partial T$ and $O_t$ a good F\o lner sequence in $G^0_\eta$. Then, for every $\varepsilon>0$, there exists a compact set $K=K(\varepsilon) \subset X$ such that for every $x \in X$ not contained in a compact $G^0_\eta$-orbit, there exists a positive integer $N=N(x,\varepsilon)$ with the property that for every $t \geq N$, we have
\begin{equation}\label{eq.dm.rec}
\nu_{x,t}(K) > 1-\varepsilon.
\end{equation}
\end{mthm}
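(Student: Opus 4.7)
I would adapt the classical Margulis--Dani non-divergence strategy for unipotent flows to this tree setting. Geometric finiteness of $\Gamma$ provides finitely many $\Gamma$-inequivalent parabolic boundary points $\xi_1,\ldots,\xi_k\in\partial T$ and, for each large $R$, a compact ``thick part'' $K_R\subset X$ whose complement is the disjoint union of cusp regions $\Omega_{R,i}$, each corresponding through the natural projection $X\to\Gamma\backslash T$ to the depth-$R$ horoball based at $\xi_i$. It therefore suffices to find $R=R(\varepsilon)$ such that $\nu_{x,t}(\Omega_{R,i})<\varepsilon/k$ for every $i$ and every $t\geq N(x,\varepsilon)$.

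Using $O_t=a^tOa^{-t}$ and the change of variables $u=a^tva^{-t}$, one rewrites $\nu_{x,t}(f)=\int_O f(a^tva^{-t}x)\,dm_O(v)$. Projecting to $T$ (with a fixed basepoint $x_0\in T$) and writing $x=g\Gamma$, the sample $a^tva^{-t}x$ corresponds to the tree vertex $a^tva^{-t}\cdot gx_0$, which lies on the horosphere at $\eta$ through $gx_0$; as $v$ ranges over $O$, these samples sweep out a piece of that horosphere whose combinatorial size grows with $t$. The event ``$a^tva^{-t}x\in\Omega_{R,i}$'' translates to the existence of $\gamma_v\in\Gamma$ sending this vertex into the standard depth-$R$ horoball at $\xi_i$.

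The crucial step, and the principal obstacle, is a quantitative non-divergence statement. Two geometric inputs enter. First, since $\xi_i\neq\eta$, any horosphere at $\eta$ meets any horoball at $\xi_i$ in a compact set whose combinatorial size grows only linearly with the depth $R$ (an ultrametric property of Busemann functions on $T$). Second, the parabolic subgroup $P_{\xi_i}:=\Stab_\Gamma(\xi_i)$ acts cocompactly on horospheres based at $\xi_i$, so the $\gamma_v$'s are confined to finitely many $P_{\xi_i}$-cosets. Combining these, the set $\{v\in O:a^tva^{-t}x\in\Omega_{R,i}\}$ decomposes as a finite union of translates of a set whose $m_O$-measure tends to zero with $R$, \emph{unless} the translates accumulate to a positive proportion of $O_t$---a rigidity which would force $x$ into a $P_{\xi_i}$-periodic configuration and hence into a compact $G^0_\eta$-orbit (via the orbit classification of \cite{CFS}), contrary to assumption. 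The main difficulty lies in making this dichotomy quantitative: one must bound the number of relevant $\gamma_v$-cosets by a quantity small compared to $m_{G^0_\eta}(O_t)$, with combinatorial tree geometry---specifically, a cone-covering argument on $O$ combined with the exponential contraction rate of $a$---substituting for the polynomial divergence estimates of the classical Lie-theoretic setting. Summing over the $k$ cusps then yields the claimed compact $K=K_{R(\varepsilon)}$.
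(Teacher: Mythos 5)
Your proposal takes a genuinely different route from the paper's proof. The paper projects the orbital measures $\nu_{x,t}$ onto the quotient graph $Q=\Gamma\backslash T$ and identifies them, through Lemmas \ref{lem:nu_and_sigma*}, \ref{lemma.side.sphere} and \ref{lemma.mc.spheres}, with step-$t$ distributions of an explicit Markov chain $M_n$ on the edge set $EQ$; the heart of the argument is then Lemma \ref{lemma:convergence}, a Markov-chain convergence result with moving initial distributions $\delta_{e(t)}$ valid whenever $t-|e(t)|\to\infty$, a condition that Proposition \ref{prop.irrational} supplies for every $x$ with non-compact $G^0_\eta$-orbit. Geometric finiteness enters through the deterministic drift of the chain toward the finite part of $Q$ along Nagao rays, which yields the sharp hitting-time tail bound of Lemma \ref{lem:first_return} that makes the moving-start convergence possible. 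The entire quantitative non-divergence problem on the tree is thus replaced by elementary estimates on a countable-state Markov chain.

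In your proposal, however, there is a genuine gap, which you yourself flag as the principal obstacle. The dichotomy you invoke --- either the cusp mass at depth $R$ tends to zero, or a rigidity forces $x$ into a compact $G^0_\eta$-orbit via the classification of \cite{CFS} --- is qualitative, whereas Theorem \ref{thm.dm.geofin} asks for a uniform bound $\nu_{x,t}(K)>1-\varepsilon$ for all $t\geq N$, with $K$ chosen independently of $x$. Extracting this would require a quantitative estimate: the proportion of $v\in O$ with $a^tva^{-t}x$ in a depth-$R$ cusp region must be bounded by a function of $R$ that decays uniformly in $t$ and $x$. In the Dani--Margulis argument that step is carried by the polynomial divergence of unipotent trajectories, and, as you acknowledge, no direct analogue exists here: the conjugation by $a^t$ dilates exponentially, horospheres in trees grow exponentially, and your $\gamma_v$-coset count would proliferate combinatorially with $t$ before you could conclude. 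The sketch therefore stops at exactly the step that must be supplied; the paper's Markov-chain reformulation is not merely a change of language but is precisely where that missing uniformity is manufactured.
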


The above is known as non-escape of mass. In the context of one-parameter unipotent flows on quotients of real Lie groups, it is due to Dani and Margulis \cite{dani.recurrence, dani-margulis}. Our result also applies to the linear setting, we now describe this special case. Let $k$ be a non-archimedean local field and $H$ be the group of $k$-points of a connected semisimple linear algebraic $k$-group $\mathbb{H}$ of $k$-rank one. Let $\mathbb{A}$ be a maximal $k$-split torus in $\mathbb{H}$, $\mathbb{Z}$ its centralizer, $\mathbb{U}$ a maximal unipotent subgroup, $\mathbb{P}$ the normalizer of $\mathbb{U}$, and, respectively, $A,Z, U, P$ be the groups of $k$-points. The group $H$ acts by automorphisms \cite{bruhat-tits} (see also \cite[page 411]{lubotzky.gafa}) on its Bruhat--Tits building which is a bi-regular tree $T$. If $\mathbb{H}$ is simply connected, then $H$ embeds as a closed subgroup of $\Aut(T)$. In general, $H$ might have edge inversion and in this case, we shall replace it with an index two subgroup that acts without edge inversion. Moreover, let $K$ be a good maximal compact group of $H$. The group $K$ is the stabilizer of a vertex of $T$, $P=ZU$ is the stabilizer of an end $\eta \in \partial T$ and we have the Iwasawa decomposition $H=KP$ (see \cite[\S 4]{bruhat-tits} or \cite[\S 8.2.1]{benoist-quint.book}).  Finally let $M$ be the compact subgroup $K \cap Z$ of $H$. In our geometric setting, we have $H^0_\eta=MU$ and the following result is an immediate consequence of the previous theorem:

\begin{corollary}\label{corol.non-escape}
Let $H$ and its subgroups $M,U$ be as above. Let $\Lambda$ be a lattice in $H$ and $O_t$ be a good F{\o}lner sequence in $MU$. Then, for every $\varepsilon>0$, there exists a compact set $K=K(\varepsilon) \subset X$ such that for every $x \in X$ not contained in a compact $MU$-orbit, there exists a positive integer $N=N(x,\varepsilon)$ with the property that for every $t \geq N$, we have $
\nu_{x,t}(K) > 1-\varepsilon$.
\end{corollary}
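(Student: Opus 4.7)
The plan is to verify that the setup in the corollary places us in the hypotheses of Theorem \ref{thm.dm.geofin}, and then to conclude by direct application. Since the corollary is billed as immediate, I expect no genuine difficulty beyond the reductions already sketched in the paragraph preceding its statement; the one point deserving care is edge inversion, which I would address first. If $\mathbb{H}$ is simply connected, then $H$ embeds as a closed subgroup of $\Aut(T)$ by \cite{bruhat-tits}. Otherwise, I pass to the index two subgroup $H' \leq H$ of elements acting without edge inversion, and to $\Lambda' := \Lambda \cap H'$, which remains a lattice. Since $MU$ fixes a vertex of $T$ it already lies in $H'$, so both the subgroup and the good F\o lner sequence $O_t$ are unaffected by this reduction.

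Next, I would check the remaining hypotheses of Theorem \ref{thm.dm.geofin} for the pair $(H', \Lambda')$: the Bruhat--Tits tree of $\mathbb{H}$ is a $(d_1, d_2)$-biregular tree with each $d_i$ equal to one plus a power of the residue characteristic of $k$, so $d_i \geq 3$ in the relevant cases; $H'$ is non-compact since $\mathbb{H}$ has $k$-rank one and $H$ is unbounded; and transitivity on $\partial T$ follows from the Iwasawa decomposition $H = KP$ together with the fact that $P$ is the full stabilizer of $\eta \in \partial T$ (cf.\ \cite[\S 8.2.1]{benoist-quint.book}). The key structural input is Lubotzky's theorem \cite{lubotzky.gafa}, already cited in the introduction: every lattice in $H$ is geometrically finite, hence so is $\Lambda'$. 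Finally, the identification $H^0_\eta = MU$ is recorded in the paragraph preceding the corollary, so a good F\o lner sequence in $MU$ is precisely a good F\o lner sequence in $(H')^0_\eta$ in the sense of Theorem \ref{thm.dm.geofin}.

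Having verified all the hypotheses, I would apply Theorem \ref{thm.dm.geofin} to $(H', \Lambda')$ to obtain non-escape of mass on $H'/\Lambda'$, and then transfer this to $H/\Lambda$ via the finite covering $H'/\Lambda' \to H/\Lambda$. The main (and only) obstacle I anticipate is the bookkeeping involved in showing that compact $MU$-orbits and orbital measures on the two spaces correspond under this covering: a compact set in $H'/\Lambda'$ maps to a compact set in $H/\Lambda$ and its preimage is compact, and $MU$-orbital averages are compatible with the covering map since $MU \subset H'$. Consequently the estimate $\nu_{x,t}(K) > 1 - \varepsilon$ pushes forward without loss, completing the proof.
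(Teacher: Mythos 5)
Your argument matches the paper's intended route: verify the hypotheses of Theorem \ref{thm.dm.geofin} using the identifications made in the preceding paragraph ($H$ embeds into $\Aut(T)$ of the Bruhat--Tits tree, $H^0_\eta = MU$, geometric finiteness of lattices in $H$ by Raghunathan and Lubotzky) and then apply the theorem directly. Those checks are correct, though the valencies $d_1,d_2$ are $1$ plus powers of the residue \emph{cardinality} of $k$ rather than of its residue characteristic.

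The one misstep is the concluding transfer from $H'/\Lambda'$ back to $H/\Lambda$: it is both unnecessary and not quite right as formulated. It is unnecessary because the paper's convention, stated just before the corollary, is that $H$ has \emph{already} been replaced by its index-two type-preserving subgroup when edge inversions occur; the definitions of $K$, $P$, $M$, $U$, and hence of $H^0_\eta$, are all made \emph{after} that replacement, so the corollary's $H$ is precisely your $H'$ and the corollary concerns only $H'/\Lambda'$. It is imprecise because the natural map $H'/\Lambda' \to H/\Lambda$, $h'\Lambda' \mapsto h'\Lambda$, is a continuous open \emph{injection}, not a covering, and it is surjective only when $\Lambda \not\subset H'$. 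When $\Lambda \subset H'$, the component $H''\Lambda/\Lambda$ (with $H''=H\setminus H'$) is missed, and to reach it one would need to conjugate, applying Theorem \ref{thm.dm.geofin} with $h_0^{-1}\eta$ in place of $\eta$ for a fixed $h_0\in H''$. Reading the corollary's $H$ as the already-replaced group, as the paper intends, removes this step and with it the only gap in your argument.
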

This corollary is only relevant for fields $k$ with $\charac k \neq 0$. Indeed in the zero characteristic case, by a result of Tamagawa \cite{tamagawa} (also observed in \cite{ratner:p-adic}), every lattice in $H$ is uniform. 
We also remark that the version of the previous corollary for $U$ (instead of $MU$) holds as well. Finally, we note that a related result which would imply the previous corollary was mentioned in \cite[page 467]{ghosh}.

An immediate general consequence of Theorem \ref{thm.dm.geofin} is

\begin{cor}\label{cor:convergence-of-orbit-measures}
For every $x \in X$, every weak-$\ast$ limit of the sequence $\nu_{x,t}$ as $t\to \infty$ is a $G_\eta^0$-invariant probability measure on $X$.
\end{cor}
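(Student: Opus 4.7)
The plan is to combine Theorem \ref{thm.dm.geofin} (controlling escape of mass) with the F\o lner property of $(O_t)_{t \in \bbN}$ (forcing invariance of any weak-$\ast$ limit). The proof splits into two independent verifications: (i) any weak-$\ast$ accumulation point of $(\nu_{x,t})$ is a probability measure, and (ii) any such accumulation point is $G^0_\eta$-invariant.

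For (i), I first dispose of the case when $x$ lies in a compact $G^0_\eta$-orbit: then $\supp \nu_{x,t} \subseteq G^0_\eta \cdot x$, a fixed compact subset of $X$, so tightness of the sequence $(\nu_{x,t})_{t\in\bbN}$ is automatic. Otherwise, Theorem \ref{thm.dm.geofin} supplies, for each $\varepsilon>0$, a compact set $K(\varepsilon) \subset X$ such that $\nu_{x,t}(K(\varepsilon)) > 1 - \varepsilon$ for all $t$ sufficiently large --- again tightness. Standard Prokhorov-type reasoning then forces any weak-$\ast$ accumulation point of $(\nu_{x,t})$ in the cone of positive Radon measures on $X$ to have total mass one.

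For (ii), fix $g \in G^0_\eta$ and $f \in C_c(X)$. Unwinding the definition of $\nu_{x,t}$, the difference
\begin{equation*}
\int_X f \, d(g_* \nu_{x,t}) - \int_X f \, d\nu_{x,t} = \frac{1}{m_{G^0_\eta}(O_t)} \int_{G^0_\eta} f(ux) \bigl(\mathbbm{1}_{g^{-1}O_t}(u) - \mathbbm{1}_{O_t}(u)\bigr)\, dm_{G^0_\eta}(u)
\end{equation*}
is bounded in absolute value by $\|f\|_\infty \cdot m_{G^0_\eta}(g^{-1}O_t \triangle O_t)/m_{G^0_\eta}(O_t)$, which tends to $0$ by the defining F\o lner property of the good F\o lner sequence $(O_t)$. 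Passing to any convergent subsequence $\nu_{x,t_k} \to \nu$, we deduce $g_* \nu = \nu$ for every $g \in G^0_\eta$, i.e.\ $\nu$ is $G^0_\eta$-invariant.

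The entire substantive content of the corollary resides in (i) and has already been absorbed into Theorem \ref{thm.dm.geofin}; step (ii) is a routine F\o lner averaging argument. In particular, the only conceptual obstacle --- the non-escape of mass --- is disposed of by the main theorem, so no further work is needed.
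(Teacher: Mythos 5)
Your proof is correct and matches the intended (implicit) argument in the paper: the corollary is labelled an immediate consequence of Theorem \ref{thm.dm.geofin}, and the two-step verification you give --- tightness via non-escape of mass, and invariance via the F\o lner averaging estimate --- is exactly the standard route. One cosmetic slip: after the change of variables justifying invariance the indicator should read $\mathbbm{1}_{gO_t}$ rather than $\mathbbm{1}_{g^{-1}O_t}$, but since the F\o lner property for $(O_t)$ holds for every $g \in G^0_\eta$ (hence for $g^{-1}$ as well), the conclusion $g_\ast\nu = \nu$ for all $g$ is unaffected.
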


In the proof of Theorem \ref{thm.dm.geofin}, exploiting the underlying geometric setting, we translate the problem of understanding the  distribution of $G^0_\eta$-orbit in $G/\Gamma$ to the language of Markov chains, where it appears as a problem of controlling the distributions of a Markov chain with changing starting distributions. We then rely on two main ingredients: the first is a qualitative description of the behaviour of the discrete geodesic flow on $G/\Gamma$, as studied in \cite{CFS}. This allows us to understand the behaviour of starting distributions of the Markov chain. The second ingredient is, naturally, a set of Markov chain theoretical tools. The proof is then carried out by combining the two ingredients.

\subsection{Equidistribution of orbits}
For example, when $G=\Aut(T)$ and for $x \in X$ lying in a compact $G^0_\eta$-orbit, by standard arguments, all weak-$\ast$ limits of $\nu_{x,t}$ are $G_\eta^0$-invariant probability measures supported on the homogeneous orbit. This orbit supports a unique  $G^0_\eta$-invariant measure and, hence, $\nu_{x,t}$ equidistribute to the homogeneous measure supported on the orbit closure.

Under the additional topological simplicity assumption on $G$, our second result yields a complete qualitative description of statistical behaviour of every $x \in X$ not contained in a compact $G^0_\eta$-orbit and for such $x \in X$, it identifies the limit of $\nu_{x,t}$ as the Haar measure:
\begin{mthm}[Equidistribution]\label{thm.equidist}
Let $T$ be a $(d_1,d_2)$-biregular tree, with $d_1,d_2\geq 3$, and $G$ a non-compact, closed, topologically simple subgroup of $\Aut(T)$ acting transitively on $\partial T$. Let $\Gamma$ be a geometrically finite lattice in $G$ and $O_t$ be a good F\o lner sequence in $G^0_\eta$. Assume $x\in X$ does not belong to a compact $G^0_\eta$-orbit. Then, the orbital measures $\nu_{x,t}$ equidistribute to the normalized Haar measure $m_X$ as $t\to \infty$, in other words, for every $f \in C_c(X)$, we have
$$
\int_{O_{t}} f(ux) dm_{O_{t}}(u) \underset{t \to \infty}{\longrightarrow} \int_X f(y) dm_X(y).
$$
\end{mthm}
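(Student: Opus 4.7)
The plan is to combine Theorem~\ref{thm.dm.geofin} with the measure classification of $G_\eta^0$-invariant probability measures on $X$ established in \cite{CFS}, together with a quantitative non-concentration estimate near the compact $G_\eta^0$-orbits.

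By Theorem~\ref{thm.dm.geofin} (and Corollary~\ref{cor:convergence-of-orbit-measures}), the family $\{\nu_{x,t}\}_t$ is tight and every weak-$*$ subsequential limit $\mu$ is a $G_\eta^0$-invariant probability measure, so it suffices to show that every such $\mu$ equals $m_X$. At this point, I would invoke the measure-theoretic counterpart of the topological classification from \cite{CFS}: under the topological simplicity of $G$, every $G_\eta^0$-invariant ergodic probability measure on $X$ is either the Haar measure $m_X$ itself or the unique $G_\eta^0$-invariant probability measure supported on an individual compact $G_\eta^0$-orbit. By ergodic decomposition, any subsequential limit $\mu$ then decomposes as $\mu = \alpha\, m_X + (1-\alpha)\, \mu'$, where $\alpha \in [0,1]$ and $\mu'$ is concentrated on the countable union $\mathcal{C} \subset X$ of compact $G_\eta^0$-orbits. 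The conclusion then reduces to showing $\mu(\mathcal{C}) = 0$: once this is established for every subsequential limit, the unique accumulation point is $m_X$, and weak-$*$ compactness of probability measures promotes subsequential convergence to the full statement $\nu_{x,t} \to m_X$.

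The main obstacle is precisely the non-concentration step $\mu(\mathcal{C}) = 0$. My plan here is to upgrade the Markov chain machinery built for Theorem~\ref{thm.dm.geofin} from a ``non-escape to the cusps'' bound to a ``non-concentration near an individual compact $G_\eta^0$-orbit'' bound. Each compact orbit $Y \in \mathcal{C}$ projects under $X \to \Gamma \backslash T$ to a distinguished cuspidal structure, whose arbitrarily thin neighborhoods pull back to a specific family of thin states of the chain encoding the $G_\eta^0$-action. Using that $G_\eta^0 \cdot x$ is dense in $X$ (the topological dichotomy for geometrically finite lattices from \cite{CFS}, applicable since $x$ is not on a compact orbit), together with the geometric non-escape from Theorem~\ref{thm.dm.geofin}, one aims to show that the chain started at the projection of $x$ visits any such thin neighborhood of $Y$ with frequency tending to $0$ with the thickness of the neighborhood, uniformly in $t$ sufficiently large. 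An exhaustion of $\mathcal{C}$ by a countable family of shrinking neighborhoods then yields $\mu(\mathcal{C}) = 0$ and completes the proof. I expect this quantitative non-concentration near each $Y \in \mathcal{C}$ to be the genuinely new input beyond Theorem~\ref{thm.dm.geofin}; everything else is formal consequence of the classification results of \cite{CFS}.
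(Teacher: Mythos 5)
Your overall architecture is the same as the paper's: use Theorem~\ref{thm.dm.geofin} to get tightness so every weak-$\ast$ limit $\mu$ of $\nu_{x,t}$ is a $G_\eta^0$-invariant probability, then feed this into a measure classification plus ergodic decomposition, and reduce everything to showing that $\mu$ gives no mass to the countable union $\mathcal{C}$ of compact $G_\eta^0$-orbits. Two remarks on the classification input: the result you quote from \cite{CFS} is proved there under extra hypotheses (Tits independence and a transitivity condition) that are not assumed in Theorem~\ref{thm.equidist}; the paper sidesteps this by proving a separate Proposition~\ref{prop.geofin.measure}, valid under only topological simplicity, via Howe--Moore and the pointwise ergodic theorem for amenable groups. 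That substitution is needed for the generality stated here.

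The genuine gap in your proposal is exactly where you flag it: you leave $\mu(\mathcal{C})=0$ as a plan, proposing to build a new quantitative non-concentration estimate for the Markov chain near thin neighborhoods of each compact orbit. That would be a substantial new piece of work, and it is not what the paper does. The paper's argument for $\mu(\mathcal{C})=0$ is soft and short. Suppose $m(C_{i_0,j_0}) = 2\epsilon > 0$. Crucially, the compact set $K = K(\epsilon)$ produced by Theorem~\ref{thm.dm.geofin} is \emph{independent} of the starting point $x$, so it serves simultaneously for $x$ and for $a^{-\ell}x$ (which is also not on a compact orbit). Since $a C_{i,j} = C_{i,j+1}$ and $a^{-\ell}C_{i,j}$ leaves every compact set as $\ell \to \infty$, one picks $\ell$ with $a^{-\ell}C_{i_0,j_0}\cap K = \emptyset$. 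The identity $a^{-\ell}O_t a^\ell = O_{t-\ell}$ gives $\lim_t \nu_{a^{-\ell}x,t} = a^{-\ell}_\ast m$ (along the subsequence), so $a^{-\ell}_\ast m$ both has mass $\geq 1-\epsilon$ in $K$ and mass $2\epsilon$ on $a^{-\ell}C_{i_0,j_0}\subset X\setminus K$, a contradiction. So the key ingredient you are missing is the equivariance of the orbital averages under conjugation by the hyperbolic $a$, combined with the uniformity in $x$ of the compact set in Theorem~\ref{thm.dm.geofin} and the escape to infinity of $a^{-\ell}C_{i,j}$; this replaces entirely the quantitative non-concentration you propose to develop.
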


The previous theorem has the following immediate consequence on the statistical behaviour of $G^0_\eta$-orbits. Let $L$ be a closed subgroup of $G$. A probability measure $\mu$ on $X$ is called $L$-homogeneous if it is the unique $L$-invariant probability measure on a closed $L$-orbit. It is said to be homogeneous if it is $L$-homogeneous for some closed subgroup $L<G$. A point $x \in X$ is called generic for $G^0_\eta$ (see \cite[Definition 1]{ratner:p-adic}) if for some (equivalently for any) good F\o lner sequence $O_t$, the sequence $\nu_{x,t}$ of orbital measures equidistributes to a homogeneous measure.
\begin{corollary}\label{corol.full}
Keep the hypotheses of Theorem \ref{thm.equidist}. 
Any $x\in X$ is generic for $G^0_\eta$.
\end{corollary}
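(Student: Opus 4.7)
The plan is to distinguish two cases according to whether $x$ lies in a compact $G^0_\eta$-orbit, and identify the limit of $\nu_{x,t}$ as a homogeneous measure in each. In the first case, when $x$ is not contained in a compact $G^0_\eta$-orbit, Theorem~\ref{thm.equidist} delivers $\nu_{x,t} \to m_X$ directly. I would then observe that $m_X$ is $G$-homogeneous in the sense of the definition: $G$ acts transitively on $X = G/\Gamma$ (so $X$ is itself a closed $G$-orbit), and because $\Gamma$ is a lattice, $m_X$ is the unique $G$-invariant probability measure on $X$. Hence $m_X$ is homogeneous and $x$ is generic.

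In the second case, suppose $x$ lies in a compact $G^0_\eta$-orbit $Y := G^0_\eta \cdot x$. Then $Y$ is closed (being compact) and each $\nu_{x,t}$ is supported on $Y$, so by Corollary~\ref{cor:convergence-of-orbit-measures} every weak-$*$ accumulation point of $\nu_{x,t}$ is a $G^0_\eta$-invariant probability measure on $X$, necessarily supported on $Y$. I would then identify $Y$ with the compact homogeneous space $G^0_\eta/\Stab_{G^0_\eta}(x)$ via the orbit map: a continuous bijection from the latter onto the compact Hausdorff space $Y$ is automatically a homeomorphism. A transitive locally compact $G^0_\eta$-space admitting at least one invariant probability measure admits a unique one (by uniqueness of invariant Radon measures on $G/H$ up to scalar, normalised by the total mass)---call it $\mu_Y$---so every subsequential weak-$*$ limit of $\nu_{x,t}$ equals $\mu_Y$, forcing $\nu_{x,t} \to \mu_Y$. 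Since $\mu_Y$ is $G^0_\eta$-homogeneous by construction, $x$ is again generic.

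There is no real obstacle: the substantive dynamics sit entirely inside Theorem~\ref{thm.equidist} and Corollary~\ref{cor:convergence-of-orbit-measures}; the present corollary only needs to identify the limit measure in each of the two possible scenarios as homogeneous and to upgrade sub-sequential weak-$*$ convergence to full convergence by invoking uniqueness of the invariant probability measure on the relevant closed orbit.
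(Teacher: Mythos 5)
Your proof is correct and matches what the paper intends: the non-compact case is exactly Theorem \ref{thm.equidist}, and the compact-orbit case is the ``standard argument'' alluded to in the paragraph preceding Theorem \ref{thm.equidist}. One small slip worth correcting: the topological fact you invoke is stated in the wrong direction. A continuous bijection is automatically a homeomorphism when the \emph{domain} is compact and the codomain Hausdorff; compactness of the \emph{codomain} alone does not suffice (the usual counterexample being the continuous bijection $[0,2\pi)\to S^1$). In the situation at hand the conclusion you want still holds, but for a different reason: $G^0_\eta$ is locally compact and second countable, $Y$ is a locally compact (indeed compact) orbit in the Hausdorff space $X$, and the orbit map $G^0_\eta/\Stab_{G^0_\eta}(x)\to Y$ is then a homeomorphism by the standard Baire-category (Effros-type) argument. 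Alternatively, you can simply cite \cite[Theorem 1.6]{CFS}, which already identifies the compact $G^0_\eta$-orbits as $G^0_\eta$-homogeneous, after which uniqueness of the invariant probability measure on the homogeneous space finishes the argument exactly as you wrote.
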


In the context of unipotent flows on $\SL_2(\bbR)/\Gamma$, this result goes back to Dani--Smillie \cite{dani-smillie}. Since then, Ratner \cite{ratner.equidist,ratner.rpadic}, Shah \cite{shah.equidist} and others have obtained very general results in Lie groups or algebraic groups over local fields of characteristic zero, but even in the case of a semisimple linear group $G$ of rank one over a local field of positive characteristic, e.g.\ $\SL_2(k)$ with $k=\mathbb{F}_q((X^{-1}))$, this result does not appear in the literature. However, we remark that for arithmetic quotients of linear groups, one may deduce such an equidistribution result by combining the work of Mohammadi \cite{mohammadi} and the result mentioned by Ghosh in \cite[page 467]{ghosh}. In the linear setting, the previous results have the following immediate consequence: 

\begin{corollary}
Keep the hypotheses and the notation of Corollary \ref{corol.non-escape}. The statement of Corollary \ref{corol.full} holds when $G^0_\eta$ is replaced with the subgroup $MU$ of $H$. 
\end{corollary}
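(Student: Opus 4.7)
The plan is to reduce to Theorem \ref{thm.equidist}. If $x \in X$ lies in a compact $MU$-orbit, any weak-$\ast$ limit of $\nu_{x,t}$ is an $MU$-invariant probability supported on this closed orbit; such a measure is unique and $MU$-homogeneous, so $\nu_{x,t}$ equidistributes to it.

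Now assume $x$ does not lie in a compact $MU$-orbit. Two hypotheses are needed to invoke Theorem \ref{thm.equidist}: geometric finiteness of the lattice, and topological simplicity of the ambient subgroup of $\Aut(T)$. The first holds in our linear setting by the theorems of Raghunathan \cite{raghunathan.geofin} and Lubotzky \cite{lubotzky.gafa}, asserting that every lattice in a rank-one semisimple group over a non-archimedean local field is geometrically finite. The second may fail, since the action of $H$ on $T$ can have a non-trivial kernel $N$; however, $N$ lies in the vertex stabilizer $K$ and is thus compact, and the quotient $G := H/N$ is topologically simple (by Tits' simplicity theorem applied in rank one), non-compact, closed in $\Aut(T)$, and transitive on $\partial T$ via the Iwasawa decomposition $H=KP$.

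Let $\pi\colon H\to G$ be the projection and $\Gamma := \pi(\Lambda)$. Compactness of $N$ ensures that $\Gamma$ is a lattice in $G$ (geometrically finite), and $\pi(MU) = G_\eta^0$ with $\pi(O_t)$ a good F\o lner sequence therein. The induced quotient $\bar\pi\colon X = H/\Lambda \to H/N\Lambda = G/\Gamma$ pushes $\nu_{x,t}$ to $\nu_{\pi(x),t}$, because $N\subseteq M$ and $O_t$ is $M$-invariant. Since $x$ is not in a compact $MU$-orbit, $\pi(x)$ is not in a compact $G_\eta^0$-orbit, so Theorem \ref{thm.equidist} yields $\bar\pi_*\nu_{x,t}\to m_{G/\Gamma}$. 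By Corollary \ref{cor:convergence-of-orbit-measures}, every weak-$\ast$ limit of $\nu_{x,t}$ is an $MU$-invariant, hence $N$-invariant, probability on $X$. Disintegration along the compact fibers of $\bar\pi$ identifies the unique such lift of $m_{G/\Gamma}$ as the Haar measure $m_X$, which is $H$-homogeneous; thus $x$ is generic.

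The main obstacle is the reduction to a topologically simple group via the compact kernel $N$, and the verification that orbital measures and compact orbits transfer faithfully along $\bar\pi$; this relies on compactness of $N$ and its containment in $M$, with the remainder being a direct appeal to Theorem \ref{thm.equidist}.
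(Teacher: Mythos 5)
Your proposal is correct and, since the paper presents this corollary as an ``immediate consequence'' with no proof supplied, it usefully records the reduction the authors leave implicit. The key point you isolate — that $H$ may act on its Bruhat--Tits tree with a non-trivial compact kernel $N$ (equal to the center $Z(H)$, e.g.\ $\{\pm I\}\subset\SL_2(k)$ when $\charac k\neq 2$), so Theorem~\ref{thm.equidist} cannot be applied to $H$ verbatim — is a genuine one, and your remedy (pass to $G=H/N$, which is topologically simple by Tits' theorem, apply Theorem~\ref{thm.equidist} there, then lift back to $H/\Lambda$ using $N$-invariance of all weak-$\ast$ limits coming from Corollary~\ref{corol.non-escape} and the containment $N\subseteq Z(H)\subseteq K\cap Z=M$) is the natural and correct one. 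Two small points worth making explicit if this were written up: (i) the pushforward $\bar\pi_\ast\nu_{x,t}$ equals the orbital measure $\nu_{\bar\pi(x),t}$ for the good F\o lner sequence $\pi(O_t)$ in $G^0_\eta$ precisely because $O_t$ is $N$-biinvariant ($N$ being central and contained in $M$), so that $\pi_\ast m_{O_t}$ is normalized Haar on $\pi(O_t)$; and (ii) the saturation $MUx=\bar\pi^{-1}\bigl(G^0_\eta\bar\pi(x)\bigr)$, which again uses $N\subseteq M$ together with normality of $N$, is what guarantees that non-compactness of the $MU$-orbit of $x$ descends to non-compactness of the $G^0_\eta$-orbit of $\bar\pi(x)$. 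Your reference to Corollary~\ref{cor:convergence-of-orbit-measures} should really be to Corollary~\ref{corol.non-escape} (the linear version of non-escape), though the content is the same. Modulo these cosmetic points, the argument is sound.
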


For example, for $H=\SL_2(\bbF_q((X^{-1})))$, one can take $\Gamma$ to be the non-uniform lattice $\SL_2(\bbF_q[X])$ and the groups $M$ and $U$ to be
\[
M=\begin{pmatrix}
\bbF_q[[X^{-1}]]^* & 0 \\
0 & \bbF_q[[X^{-1}]]^* 
\end{pmatrix},\,  \; \; \;   
 U = \begin{pmatrix}
1 &  \bbF_q((X^{-1})) \\
0 & 1
\end{pmatrix}.
\]
We remark that for uniform lattices, one can use Margulis' orbit thickening argument to show that $U$-action is uniquely ergodic (see Mohammadi \cite{mohammadi}, Ellis--Perrizo \cite{ellis-perrizo} or \cite[Lemma 6.3]{CFS}). It is also worth noting that for non-uniform quotients, using our geometric approach, one can show the version of the previous corollary for the $U$-action (instead of $MU$). Finally, we mention the work of Vatsal \cite{vatsal} in which the equidistribution results of Ratner \cite{ratner:p-adic,ratner.rpadic} for unipotent dynamics in the $p$-adic case were applicable with a geometric approach similar to ours (see \cite[Page 9-10]{vatsal}).

Regarding the proof of Theorem \ref{thm.equidist}, it is proven by using Theorem \ref{thm.dm.geofin}, the classification of $G_\eta^0$-orbits, given in \cite{CFS} and the Howe--Moore property established in \cite{burger-mozes.product}.

\subsection{New non-linear homogeneous dynamical phenomena} 
So far, the results obtained in Theorems \ref{thm.dm.geofin} and \ref{thm.equidist} for geometrically finite lattices parallel the more classical results in linear homogeneous dynamics. However, the family of tree lattices is very rich and, as opposed to the linear setting, there exist many non-geometrically finite lattices. These exhibit wilder behaviors than their linear counterparts giving rise to several interesting phenomena that do not appear in the classical setting. Various aspects  of these differences, as well as analogies, were studied by many, including Serre \cite{bass-serre}, Tits \cite{tits}, Bass--Kulkarni \cite{bass-kulkarni}, Burger--Mozes \cite{burger-mozes.local,burger-mozes.product}, Lubotzky \cite{lubotzky.gafa}, Bass--Lubotzky \cite{bass-lubotzky}, Paulin \cite{paulin.geo.fin}, Bekka--Lubotzky \cite{bekka-lubotzky} etc. The following results add a new \textit{dynamical aspect} to these non-linear phenomena showing that horospherical orbits on quotients by non-geometrically finite lattices can exhibit escape of mass, which does not occur in homogeneous dynamics in the linear setting.


\begin{mthm}[Escape of mass]\label{thm.escape}
For any $q\geq 2$, there exist a lattice $\Gamma$ in $G=\Aut(T_{2q+2})$ and $\eta \in \partial T_{2q+2}$ such that  for the trivial coset $x=e\Gamma \in X$, any compact $K\subset X$ and any good F\o lner sequence $(O_t)_{t \in \mathbb{N}}$ in $G^0_\eta$, we have
\[ \lim_{t\to \infty} \nu_{x,t}(K) = 0. \]
\end{mthm}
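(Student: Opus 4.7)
The plan is to build an explicit non-geometrically finite lattice $\Gamma\le G=\Aut(T_{2q+2})$ whose quotient $\Gamma\backslash T$ is an infinite ray with carefully tuned edge indices, choose $\eta$ pointing into the unique cusp, and show that the projection of the orbital measures $\nu_{e\Gamma,t}$ to this ray follows a biased nearest-neighbor walk that escapes into the cusp. By the Bass--Kulkarni realization of edge-indexed graphs as tree lattices \cite{bass-kulkarni,bass-lubotzky}, take $\Gamma$ corresponding to the ray $\mathcal{R}=\{w_0,w_1,w_2,\ldots\}$ with edges $e_n=(w_n,w_{n+1})$ labelled by indices $(\alpha_n,\beta_{n+1})$ subject to $\alpha_0=2q+2$ and $\alpha_n+\beta_n=2q+2$ for $n\ge 1$. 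Choose $\alpha_n\in\{q+2,q+3\}$ (so $\beta_n\in\{q-1,q\}$ for $n\ge 1$) with $(\alpha_n)_{n\ge 0}$ \emph{not} eventually constant---for instance $\alpha_n=q+3$ when $n$ is a triangular number and $\alpha_n=q+2$ otherwise. Then $|G_{w_{n+1}}|/|G_{w_n}|=\alpha_n/\beta_{n+1}\ge (q+2)/q>1$ forces geometric growth of vertex groups and hence finite covolume $\sum_n|G_{w_n}|^{-1}<\infty$, while the non-constancy of the ratios prevents $\Gamma\backslash T$ from having a standard geometric cusp, so $\Gamma$ is non-geometrically finite. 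Realize the graph of groups inside $\Aut(T_{2q+2})$ by local vertex groups acting on the link with two orbits of the prescribed sizes (e.g.\ $G_{w_n}\cong\Sym(\alpha_n)\times\Sym(\beta_n)$) and compatible edge groups.

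Fix a preimage $v_0\in T$ of $w_0$ and an end $\eta\in\partial T$ such that the geodesic $v_0,v_{-1},v_{-2},\ldots$ from $v_0$ toward $\eta$ projects monotonically to $w_0,w_1,w_2,\ldots$, and pick $a\in G$ hyperbolic with axis through $v_0$ and attracting fixed point $\eta$, so that $a^tv_0=v_{-t}$ projects to $w_t$. For $x=e\Gamma$ and any good F\o lner sequence $O_t=a^tOa^{-t}$, the orbit $O_t\cdot v_0=a^tOv_t$ is, up to lower-order boundary effects, the set of $(2q+1)^t$ vertices in the horocycle $\mathcal{H}_0$ whose geodesic to $\eta$ passes through $v_{-t}$, and $\nu_{x,t}$ distributes uniformly on this set. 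The map $\pi\colon X\to\Gamma\backslash T$ defined by $g\Gamma\mapsto \Gamma g^{-1}v_0$ has compact preimages over finite vertex-sets (each $\pi^{-1}(\Gamma v_n)$ being the image of the compact-open stabilizer $\Stab_G(v_n)$ in $X$), so escape of mass in $X$ reduces to showing $(\pi_*\nu_{x,t})(\{w_0,\ldots,w_N\})\to 0$ for every fixed $N$. Now $\pi_*\nu_{x,t}$ is the time-$t$ distribution of the uniform non-backtracking random walk on $T$ started at $v_{-t}$ and moving at each step to one of the $2q+1$ non-$\eta$-parent neighbors of the current vertex, projected to $\mathcal{R}$. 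On the oriented edges of $\mathcal{R}$ the projection is Markovian, with transition probabilities read off from the edge indices: at $w_n$ arriving from $w_{n-1}$, the walker moves to $w_{n+1}$ with probability $\alpha_n/(2q+1)$ and to $w_{n-1}$ with probability $(\beta_n-1)/(2q+1)$, while arriving from $w_{n+1}$ the probabilities become $(\alpha_n-1)/(2q+1)$ and $\beta_n/(2q+1)$. Under $\alpha_n\ge q+2$, both conditional expected increments are bounded below by $1/(2q+1)>0$; with uniform positive drift and starting position $w_t$, an Azuma--Hoeffding estimate yields $\Pr[\text{position at time }t\le w_N]\le e^{-ct}$ for some $c=c(q)>0$ and $t$ large, giving total escape of mass.

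The main obstacle lies in the construction step: producing a genuine closed subgroup of $\Aut(T_{2q+2})$---not just an abstract group acting on a tree---realizing the prescribed non-periodic edge-indexed ray with finite covolume while retaining the uniform drift hypothesis $\alpha_n\ge q+2$. This requires an explicit specification of local vertex and edge groups whose amalgams embed in $\Aut(T)$; Bass--Kulkarni provides the general machinery, but the explicit verification (together with the choice of the axis of $a$ projecting monotonically into the cusp, which uses strong transitivity of $G$ on $\partial T$) is the critical point. The Markov chain analysis is then a careful but essentially routine tracking of non-backtracking walks on oriented edges of $\mathcal{R}$, the new input being that non-geometric-finiteness allows the ratios to be bounded away from the balanced value $\alpha_n=\beta_n=q+1$ that would otherwise force recurrence as predicted by Theorem \ref{thm.dm.geofin}.
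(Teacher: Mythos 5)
Your construction does not produce a lattice. For an edge $e$ from $v$ to $w$ the paper's conventions give $|\Gamma_{\tilde w}|/|\Gamma_{\tilde v}|=\Delta(e)=\ind(\bar e)/\ind(e)$. With your labelling $(\alpha_n,\beta_{n+1})$ on the edge $(w_n,w_{n+1})$, this is $\beta_{n+1}/\alpha_n$, \emph{not} $\alpha_n/\beta_{n+1}$. Since you take $\alpha_n\ge q+2$ and $\beta_{n+1}\le q$, the ratio is $<1$, so $N_o(w_n)=\prod_{k<n}\bigl(\beta_{k+1}/\alpha_k\bigr)\to 0$ and the covolume $\vol_o(Q,\ind)=\sum_n N_o(w_n)^{-1}$ diverges: you get a discrete group of infinite covolume. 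This is not a repairable sign slip. On a single-edge ray the ratio $\ind(\bar e)/\ind(e)$ controls simultaneously the covolume and the drift of the projected non-backtracking walk, and in opposite directions: finiteness of volume forces the inward indices to dominate, which makes the walk drift \emph{toward} the finite part. The uniformly positive outward drift that your Azuma--Hoeffding step needs is therefore incompatible with the lattice condition on such a ray.

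The paper resolves the tension by a different mechanism. Its lattice (Fig.~\ref{fig:2q-ray}) has a ray with \emph{doubled} edges, each outward edge indexed $1$ and each inward edge indexed $q$; the covolume is finite and the parallel edges prevent the tail from being a Nagao ray, so $\Gamma$ is non-geometrically-finite. The associated chain $M_n$ still drifts toward the finite part---indeed it is positive recurrent by Proposition~\ref{prop.pos.rec}---but with speed strictly less than one, because from any inward-pointing edge there is probability $1/(2q+1)$ of taking an outward step. A walker started at $x_t$ and run for $t$ steps therefore remains at distance of order $t$. The paper makes this quantitative not via a martingale estimate but by a direct lifting count: cusp-ward steps in $Q$ lift in at most $2$ ways to $T$ while origin-ward steps lift in at most $2q$, so of the $(2q+1)^t$ vertices in $F_t\tilde o$ the number projecting to a fixed $x_l$ is at most $\binom{t}{l/2}(2q)^{t-l/2}2^{l/2}$, an exponentially small fraction. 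If you want to salvage your route you must (i) choose indices that actually give finite covolume, which forces inward drift, and (ii) replace the positive-outward-drift large deviation with a subcritical-speed argument showing that a walker at distance $t$ with inward speed $<1$ cannot return within $t$ steps. Your framing of the quotient walk as a Markov chain and the reduction to escape of the projected measures are correct and agree with the paper's Lemmas~\ref{lem:nu_and_sigma*}, \ref{lemma.side.sphere}, and \ref{lemma.mc.spheres}; the gap is in the construction and the sign of the drift.
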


Recall that in the setting of unipotent dynamics on linear homogeneous spaces, by now classical results of Ratner \cite{ratner.class, ratner.equidist,ratner.rpadic}, Mozes, Shah \cite{mozes-shah, shah.equidist} and others show that the orbital averages along unipotent group actions always converge towards an invariant probability measure. The following result contrasts the classical situation by giving an example where we see not only an escape of mass phenomenon, but also a failure of convergence of the orbital averages along F\o lner sequences.



\begin{mthm}[Escape of mass and equidistribution]\label{thm.escape.and.equidist}
There exists a non-uniform lattice $\Gamma<\Aut(T_6)$ with the property that for any $\eta \in \partial T$ there exist points $x \in X=\Aut(T_6)/\Gamma$ such that for any good F\o lner sequence $(O_t)_{t\in \mathbb{N}}$ in $G^0_\eta$, the set of accumulation points of the sequence of orbital averages $\nu_{x,t}$ contains the zero measure and $m_X$.
\end{mthm}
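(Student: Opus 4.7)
The plan is to enhance the cuspidal construction used for Theorem \ref{thm.escape} by installing an \emph{alternating} edge-index profile along the cusp so that the horospherical orbital averages oscillate between near-total concentration deep in the cusp and near-uniform spread over $X$. Since $G=\Aut(T_6)$ is transitive on $\partial T$, a single construction will handle every $\eta$; after conjugation by $G$, one may assume the end $\eta$ is the one selected by the construction. The lattice $\Gamma$ will be realized via Bass--Serre theory as the fundamental group of a graph of finite groups whose underlying graph contains a cuspidal ray $v_0, v_1, v_2,\ldots$, on which the pair of edge indices at each step is carefully prescribed.

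First I would fix two rapidly increasing integer sequences $(t_n)_{n\geq 1}$ and $(s_n)_{n\geq 1}$ satisfying $t_n \ll s_n \ll t_{n+1}$, corresponding to alternating depths in the cusp. On the intervals $(t_n, s_n)$ I install a strongly unbalanced ``trapping'' profile, i.e.\ much larger index going into the cusp than coming out, as in the lattice built for Theorem \ref{thm.escape}; on the intervals $(s_n, t_{n+1})$ I install a nearly balanced profile so that the cross-section looks like a regular subtree over these scales. Finite covolume is then enforced by requiring the trapping phases to dominate the total edge-indexed volume at large scales, which still leaves a great deal of flexibility in calibrating the two block lengths.

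Next I would invoke the Markov-chain description of horospherical orbits used in the proofs of Theorems \ref{thm.dm.geofin} and \ref{thm.escape}. Choose $x$ so that its preimage in $T$ sits deep in the cusp at a depth tuned to $s_n$. Writing $\nu_{x,t}=a^t(m_O * \delta_{a^{-t}x})$, the measure $\nu_{x,t}$ is described as the distribution at time $t$ of a horospherical Markov chain started at $a^{-t}x$, then pushed forward by $a^t$. Along the subsequence $t=s_n$, the chain spends its last stretch entirely in the trapping profile and, by a direct mass-concentration estimate analogous to the one used for Theorem \ref{thm.escape}, concentrates in a horoball neighborhood of depth tending to infinity; this yields $\nu_{x,s_n}(K)\to 0$ for every compact $K\subset X$, so the zero measure lies in the set of accumulation points.

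Along the alternating subsequence $t=t_n$, the chain traverses a long balanced block just before being observed, so the horospherical orbit spreads essentially uniformly over the cross-section of a fixed compact region of $X$. Passing to a weak-$\ast$ accumulation measure $\mu$ of $\nu_{x,t_n}$ and invoking the Howe--Moore property from \cite{burger-mozes.product} as in the proof of Theorem \ref{thm.equidist}, the portion of $\mu$ of positive total mass supported in the bulk is $G$-invariant and therefore proportional to $m_X$; since the mixing-block construction forces this portion to be arbitrarily close to total mass along an appropriate sub-subsequence, $m_X$ itself is an accumulation point. The main obstacle is to calibrate $(s_n,t_n)$ precisely so that the mixing-block length suffices for Howe--Moore to take over at time $t_n$, while the trapping-block length is long enough to overwhelm the residual spread from the previous mixing phase at time $s_n$. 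Both conditions amount to quantitative bounds that are polynomial in the block lengths, so a sufficiently fast super-exponential choice of $(s_n,t_n)$ reconciles them and produces the required single point $x$ realizing both accumulation phenomena.
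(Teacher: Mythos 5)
The paper's construction is genuinely different from what you propose, and one of your key mechanisms is based on a misreading of Theorem~\ref{thm.escape}. You want to alternate along the cusp between a ``strongly unbalanced trapping profile, as in the lattice built for Theorem~\ref{thm.escape}'' and a ``nearly balanced'' one, with escape happening during the trapping phases. But the escape of mass in Theorem~\ref{thm.escape} is not caused by a drift toward the cusp: in that multi-edge example the associated Markov chain in fact drifts strongly \emph{toward} the origin (the chain $M_n$ is positively recurrent for every lattice by Proposition~\ref{prop.pos.rec}). Escape there comes from an exponential counting argument over the sphere $F_t\tilde o$, which depends on the doubled edges and has no analogue in a single-ray profile. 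Moreover, a profile with ``much larger index going into the cusp than coming out'' would force $N_o(x_j)$ to decrease along the ray, contradicting the finite-volume condition \eqref{eq:volume-formula}. In the paper, the actual trapping mechanism is the opposite of your intuition: the blocks where escape occurs are the \emph{balanced} ones ($\ind=3$ on both sides), which make $M_n$ a driftless symmetric walk inside the block, so a subgaussian concentration estimate keeps the distribution inside a block whose distance to the origin diverges. The drift toward the origin is concentrated at isolated ``black'' vertices that ensure recurrence.

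The second structural difference is that the paper uses a \emph{single} self-similar edge-indexed ray and instead varies the \emph{itinerary of the point}: the choice of $x=g\Gamma$ is encoded by the sequence $e(t)=\pi(g^{-1}(y_t,y_{t+1}))$ in $EQ_\alpha$, which is arranged to return to $x_0$ infinitely often while also making progressively deeper excursions into the balanced blocks. Escape occurs along times $t_{i_k}$ at which $e(t_{i_k})$ sits in the middle of a large block; equidistribution occurs along times $t_k$ at which $e(t_k)$ lies in a fixed finite set. Your approach instead fixes the point and alternates the \emph{lattice profile}, which is a different object entirely and would need its own verification of finite covolume, positive recurrence, and the quantitative trapping estimate.

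Finally, the equidistribution half of your argument is too loose to close. You claim that after a long balanced block the chain ``spreads uniformly over a compact region'' and that Howe--Moore then makes the bulk of any accumulation measure $G$-invariant; but balanced blocks are exactly the slowly-diffusing ones, so they do not spread the chain back to the origin, and Howe--Moore does not by itself render a weak-$\ast$ limit $G$-invariant. In the paper, equidistribution along the recurrent subsequence $t_k$ is established in two steps that are both missing here: (i) tightness of $(\nu_{x,t_k})$, obtained from positive recurrence of $M_n$ (Proposition~\ref{prop.pos.rec}) together with the fact that $e(t_k)$ stays in a fixed finite set $F\subset EQ$, so that any accumulation point is an honest $G^0_\eta$-invariant probability measure; and (ii) the conclusion $m=m_X$, obtained from the mixing of a hyperbolic element $a$ on $(X,m_X)$ via \cite[Lemma 6.3]{CFS}, using the key geometric input that $a^{-n_k}g\Gamma$ remains in a fixed compact set $K$ (cf.\ \eqref{eq.in.K}). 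Without the recurrence hypothesis on $e(t_k)$, neither step is available, and your two quantitative requirements on $(s_n,t_n)$ are not the ones that actually arise: there is no polynomial-in-block-length bound to tune. The correct calibration in the paper is the condition $\liminf_t t^{\beta}/|e(t)|=0$ for some $\beta\in(2/3,1)$, whose precise range comes from making the iterated subgaussian estimate close.
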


The proof of this theorem is carried out in Section \ref{sec.escape} and consists of several parts. In fact, it yields an uncountable number of non-isomorphic such lattices in $\Aut(T_6)$. The construction of these lattices has a similar flavor as the constructions of Bass--Lubotzky in \cite{bass-lubotzky} to show that there are lattices of arbitrarily small covolumes in $\Aut(T)$. Once the candidate lattices are constructed, the escape of mass phenomenon is proven by exploiting further the aforementioned connection between the Markov chain theory and distributions of horospherical orbits. This step uses the relatively finer ingredient of subgaussian concentration estimates for geometrically ergodic Markov chains (see e.g.\ Dedecker-Gou\"{e}zel \cite{dedecker-gouezel}). Finally, the proofs of the uniqueness of the $G^0_\eta$-invariant probability measure and the equidistribution along some orbital averages rely, among others, on the mixing of the discrete geodesic flow and the positive recurrence of the associated Markov chain.



\subsection{Equidistribution of spheres} To describe the general problem that we study here, consider a morphism of graphs $\pi: T \to Q$, where $T$ is a biregular tree. For a vertex $\tilde{v} \in VT$, let $S(\tilde{v},n)$ be the set of vertices of $T$ at distance $n$ from $\tilde{v}$. Let $\rho_n$ be the uniform distribution on $S(\tilde{v},n)$. We are interested in the distributions $\pi_* \rho_n$ on $VQ$: do they have a limiting distribution and, if yes, can one identify it? Questions about equidistribution of spheres are well-studied in many homogeneous quotients: Euclidean spheres in $\bbR^d/\bbZ^d$ in  \cite{randol}
or hyperbolic spheres in quotients of hyperbolic space  $\mathbb{H}^d/\Gamma$, where $\Gamma$ is a lattice in $\SO(d,1)$  (see \cite[Theorem 3.3]{bekka-mayer}, \cite{randol} and \cites{shah-spheres, rudnick-duke-sarnak, eskin-mcmullen} for more general results with applications to various counting problems). In the following result, we answer such a question for the natural quotient $Q$ of the tree associated to the $\Gamma$-action, where $\Gamma$ is a general lattice in $\Aut(T)$.

\begin{mthm}[Equidistribution of spheres in quotients by tree lattices]\label{thm.dist.spheres}
Let $T$ be a biregular tree, $\Gamma \leq \Aut(T)$ a tree lattice. Denote by $Q=\Gamma \backslash T$. 
\begin{enumerate}
    \item (Non-escape of mass) For any $\epsilon>0$, there exists a finite subset $K\subset VQ$, such that for all $n \in \mathbb{N}$ we have
$$
\pi_* \rho_n(K) \geq 1-\epsilon.
$$
\item (Limiting distribution) There exists an integer $p$, and limiting probability distributions $\mu_0, ..., \mu_{p-1}$ on $VQ$ such that for all $v\in VT$ and for all $0\leq j <p$ we have 
\[   \pi_* \rho_{pn+j} \to \mu_j,  \quad \text{ as } n\to \infty .\]
    \item (Exponential convergence) If, in addition, $\Gamma$ is geometrically finite, we can take $p=2$ and there exists $r>1$ such that
    \[ \| \pi_* \rho_{2n+j} - \mu_j \| = o(r^{-n}),\]
where $\|.\|$ denotes the total variation norm.
\end{enumerate}
\end{mthm}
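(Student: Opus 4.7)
The plan is to recast $(\pi_*\rho_n)$ as the marginals of an explicit positive recurrent Markov chain on the countable graph $Q$ and then apply standard Markov chain theory; geometric finiteness will only enter in (3), to produce a spectral gap.

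First, I would encode $\rho_n$ as the time-$n$ law of the non-backtracking random walk on $T$ starting at $\tilde v$: from $\tilde v$ move to a uniformly random neighbor, and at each subsequent step pick uniformly among the $\deg(\cdot)-1$ forward neighbors. Since $T$ is a tree, each vertex of $S(\tilde v,n)$ is reached along a unique path, so this time-$n$ law is exactly $\rho_n$. The walk lifts naturally to a Markov chain on the set $\mathcal{E}(T)$ of oriented edges of $T$; being $\Aut(T)$-equivariant, it descends to a Markov chain $(X_n)$ on the countable set $\mathcal{E}(Q)$ of oriented edges of $Q$, whose $VQ$-marginal via the target-vertex map is exactly $\pi_*\rho_n$. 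I would then check that the measure $e\mapsto |\Gamma_e|^{-1}$ on $\mathcal{E}(Q)$ is invariant for $(X_n)$, using the invariance of the uniform measure on $\mathcal{E}(T)$ together with the orbit--stabilizer relation. The lattice hypothesis on $\Gamma$ is equivalent, up to bounded valence factors, to the finiteness of this measure; after normalising one obtains the unique invariant probability $\pi_\infty$, and the chain is irreducible because $Q$ is connected.

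Parts (1) and (2) would then follow from classical theorems on irreducible positive recurrent Markov chains on countable state spaces. Positive recurrence yields tightness of $\{\mathrm{Law}(X_n)\}_n$ started from any fixed state, which on projection to $VQ$ gives assertion (1). Writing $p$ for the period of $(X_n)$, the decomposition of $\mathcal{E}(Q)$ into $p$ cyclic classes combined with the Markov convergence theorem gives $\mathrm{Law}(X_{pn+j})\to\pi_\infty^{(j)}$ in total variation for each residue $j$, and pushforward to $VQ$ produces the measures $\mu_j$ of (2). The biregularity of $T$ forces $p=2$, since the walk alternates between the two vertex types.

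For part (3), under geometric finiteness $Q$ splits as a finite core $C$ together with finitely many cuspidal rays, along each of which the stabilizer orders $|\Gamma_e|$ grow geometrically. Exploiting the explicit cusp description already used in the proof of Theorem \ref{thm.dm.geofin}, I would verify a Foster--Lyapunov geometric drift condition $PV\leq\lambda V+b\mathbf{1}_{C'}$, with $C'$ the oriented edges lying over $C$, $\lambda<1$, and $V$ growing exponentially in the cuspidal depth. The Meyn--Tweedie criteria then yield geometric ergodicity of $(X_n)$, i.e.\ exponential convergence in total variation to $\pi_\infty$ at a uniform rate $r^{-n}$ with $r>1$, which together with the period-$2$ decomposition above proves (3). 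The main difficulty will be the construction of the Lyapunov function: one must extract a net return bias towards $C$ purely from the folding by $\Gamma_\eta$ in each cusp, since the non-backtracking walk on $T$ itself drifts deterministically away from its starting vertex. The fine cusp geometry of geometrically finite tree lattices and the Markov-chain tools already developed for Theorems \ref{thm.dm.geofin} and \ref{thm.equidist} are precisely what one needs to make this work.
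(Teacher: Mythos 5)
Your proposal is correct and its overall architecture matches the paper's: recast $\pi_*\rho_n$ as the law of the non-backtracking edge chain $M_n$ on $EQ$ projected via $\partial_1$, prove positive recurrence to get (1)--(2), and prove geometric ergodicity via a Foster--Lyapunov drift inequality for geometrically finite $\Gamma$ to get (3) with rate $r>1$.

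Where you genuinely diverge from the paper's primary argument is the proof of positive recurrence of $M_n$. You propose to directly verify that $\nu(e)=|\Gamma_{\tilde e}|^{-1}$ is stationary (using that the counting measure on $ET$ is stationary for the non-backtracking walk and is $\Gamma$-invariant), and to read off finiteness of $\nu$ from the volume condition $\sum_v N_o(v)^{-1}<\infty$. This works: a short computation with the transition kernel and $\sum_{e:\partial_0 e=v}\ind(e)=\deg(v)$ confirms stationarity, and $\sum_e|\Gamma_{\tilde e}|^{-1}\leq\max\{d_1,d_2\}\sum_v|\Gamma_{\tilde v}|^{-1}<\infty$. The paper's Proposition 6.4 instead introduces an auxiliary vertex chain $\hat M_n$ (the projection of the \emph{simple} random walk on $VT$), finds its reversible stationary measure, and then transfers positive recurrence to $M_n$ using Kingman's subadditive ergodic theorem and the positivity of the speed $r>0$ of the simple random walk on $T$. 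Your route is cleaner in that it avoids Kingman and the vertex chain entirely; it is in fact recorded in the paper as the referee's alternative proof in the remark immediately following Proposition 6.4 (the unimodularity argument on $\Gamma\backslash G/G_x\cup\Gamma\backslash G/G_y$). Two small points to watch if you flesh this out: (i) the assertion that ``biregularity forces $p=2$'' only shows the period is even since the source vertex of $M_n$ alternates between the two $\Aut(T)$-orbits on $VT$; to pin down $p=2$ the paper uses Lemma 3.4, which needs two consecutive edges with index $>1$ — automatic in the geometrically finite case, which is all that part (3) requires; and (ii) for (3) the Lyapunov function cannot simply grow with cusp depth, because on each Nagao ray the transitions are deterministic on edges pointing inward and only biased with probability $(q-1)/q$ on outward-pointing edges, so $V$ must treat the two orientations asymmetrically (the paper uses $V(e)=q^{0.1|e|}$ inward and $V(e)=2^{-|e|}q^{0.9|e|}$ outward). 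You flag exactly this difficulty, which is the right instinct.
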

In geometrically finite case $(3)$, the measures $(\mu_j)_{j=0,1}$ coincide with the projection of the Haar measure $m_X$ by the natural map $\proj: \Aut(T)/\Gamma \to VQ$ by two different base points. The exponential rate of convergence $1/r$ in this result can be made effective, using the effective version of geometric ergodic theorem for Markov chains as in \cite{baxendale}.

The proof of the previous result relies on the tools we develop to prove Theorem \ref{thm.dm.geofin}. Indeed, the Markov chain that we construct to track the statistical behaviour of horospherical averages easily allows one to understand the spherical averages provided one proves a (positive) geometric recurrence property for (non-) geometrically finite lattice quotients. This is carried out in Section \ref{sec.spheres}. To draw an analogy, the overall proof can be seen to parallel, in considerably simpler fashion, the deduction of Theorem \cite[Theorem 4.4]{eskin-margulis-mozes} from Theorem 4.1 in that work.

\begin{remark}[Diophantine exponent vs.\ speed of equidistribution]\label{rk.intro.dioph}
In fact, in the geometrically finite case, using the geometric recurrence of the associated Markov chain (Lemma \ref{lemma:geom-erg}), one can show the version of the equidistribution in Theorem \ref{thm.equidist} on the quotient $VQ$ additionally with a speed as in (3) above. The equidistribution itself directly follows by projecting the measures $m_{O_t}$ and $m_X$ in Theorem \ref{thm.equidist} by the map $\proj$. The speed of equidistribution depends on a geometric diophantine exponent (see e.g.\ \cite[(1.6)]{strombergsson} and \cite{paulin.geo.fin, fishman-simmons-urbanski}) of the boundary point $g^{-1}\eta$ where $x=g\Gamma$. From this perspective, Theorem \ref{thm.dist.spheres} (3) can also be seen as a particular case based on the fact of hyperbolic geometry that large circles are well-approximated by horocycles \cite[p.116]{eskin-margulis-mozes} (see also Remark \ref{rk.equidist.in.X}).
\end{remark}


\subsection{Counting lattice points}
Another classical question closely related to the equidistribution of spheres is the problem of counting lattice points. To describe the general problem, consider a lattice $\Gamma$ (or more generally a discrete subgroup) in some locally compact topological group endowed with a non-negative functional $\|\cdot\|$. One is interested in describing the asymptotics of
$$  N(R) = | \{ \gamma \in \Gamma : \| \gamma \| \leq R  \} |.$$
This problem goes back to Gauss who was interested in the case $\bbZ^d \leq \bbR^d$ with Euclidean norm as the functional $|| \cdot ||$. This particular problem is known as Gauss circle problem and the sharp error rates are still unknown. For  $\Gamma \leq \SL_2(\bbR)$, one can take $|| \cdot ||$ to be the operator norm induced by the Euclidean norm on $\mathbb{R}^2$, in which case, we have $\| g\| = \exp(\frac{1}{2} d_{\bbH^2}(g.i, i))$. This was already studied by Delsarte \cite{delsarte} in 40's, who obtained the first non-euclidean counting results. In the same setting, lattice point counting problem is also closely related to the counting of closed geodesics on hyperbolic surfaces. For an extensive historical survey and overview of methods used, we refer to \cite{gorodnik-nevo}, where the authors also develop spectral techniques to study the lattice point counting problem in a large generality.

Coming back to our setting, in analogy with the real hyperbolic case, it is natural to consider the functional $\| g\| = d(g\tilde{o},\tilde{o})$, where
$\tilde{o}\in VT$ is some basepoint and $d$ the graph distance on the tree. Clearly, for a discrete $\Gamma$, $N(R)$ is finite and non-decreasing in $R$. The following result describes the growth asymptotics of $N(R)$ with exponential error term for a geometrically finite lattice $\Gamma$:

\begin{mthm}\label{thm.counting}
Let $T$ be a biregular tree, $\Gamma \leq \Aut(T)$ a geometrically finite tree lattice. Let $m$ be an Haar measure on $\Aut(T)$ and $m_X$ the induced finite measure on $X=\Aut(T)/\Gamma$. Fix a basepoint $\tilde{o}\in VT$ and for $R \in \mathbb{N}$, let 
$$  N(R) = | \{ \gamma \in \Gamma : d(\gamma\tilde{o}, \tilde{o}) \leq R  \} |.$$
Denote by $B_T(R)$ the cardinality of the set of vertices at an even distance from $\tilde{o}$ that is at most $R$.
Then, there exists $c \in (0,1)$ such that
$$ 
\left|\frac{N(2R)}{B_T(2R)} -  \frac{m(G_{\tilde{o}})}{m_X(X)} \right| < o(c^{2R}).
$$
\end{mthm}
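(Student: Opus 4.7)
The plan is to deduce the counting asymptotics directly from the equidistribution of spheres in Theorem \ref{thm.dist.spheres}(3) via the classical orbit-counting identity. Since $\Aut(T)$ acts without edge inversion, the orbit $\Gamma\tilde{o}$ sits entirely in the part of the natural bipartition of $VT$ containing $\tilde{o}$, so every $\gamma\tilde{o}$ lies at an even distance from $\tilde{o}$. Moreover, $\Gamma_{\tilde{o}} := \Gamma\cap G_{\tilde{o}}$ is finite, being a discrete subgroup of the compact group $G_{\tilde{o}}$, and each vertex of $\Gamma\tilde{o}$ admits exactly $|\Gamma_{\tilde{o}}|$ preimages under $\gamma\mapsto\gamma\tilde{o}$. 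Writing $o := \pi(\tilde{o})\in VQ$, the fibre $\pi^{-1}(o) = \Gamma\tilde{o}$ gives
\begin{equation*}
N(2R) \;=\; |\Gamma_{\tilde{o}}|\sum_{n=0}^{R}\bigl|\Gamma\tilde{o}\cap S(\tilde{o},2n)\bigr| \;=\; |\Gamma_{\tilde{o}}|\sum_{n=0}^{R}\pi_*\rho_{2n}(\{o\})\,|S(\tilde{o},2n)|,
\end{equation*}
while $B_T(2R) = \sum_{n=0}^{R}|S(\tilde{o},2n)|$.

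Next, I would identify the candidate limit $|\Gamma_{\tilde{o}}|\mu_0(\{o\})$, where $\mu_0$ is the probability measure on $VQ$ appearing in Theorem \ref{thm.dist.spheres}(3) for the parity matching $\tilde{o}$. According to that theorem, $\mu_0$ is the normalized projection of $m_X$ under $\proj\colon X\to VQ$. A direct calculation identifies the fibre $\proj^{-1}(o)\subset X$ with $G_{\tilde{o}}/\Gamma_{\tilde{o}}$, which carries $m_X$-mass equal to $m(G_{\tilde{o}})/|\Gamma_{\tilde{o}}|$. Dividing by $m_X(X)$ gives $\mu_0(\{o\}) = m(G_{\tilde{o}})/(|\Gamma_{\tilde{o}}|\,m_X(X))$, so $|\Gamma_{\tilde{o}}|\mu_0(\{o\}) = m(G_{\tilde{o}})/m_X(X)$, matching the claimed constant.

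Finally, I would extract the exponential error. Set $a_n := \pi_*\rho_{2n}(\{o\})$, $L := \mu_0(\{o\})$ and $b_n := |S(\tilde{o},2n)|$, so that
\begin{equation*}
\frac{N(2R)}{B_T(2R)} - \frac{m(G_{\tilde{o}})}{m_X(X)} \;=\; |\Gamma_{\tilde{o}}|\cdot\frac{\sum_{n=0}^{R}(a_n-L)\,b_n}{\sum_{n=0}^{R}b_n}.
\end{equation*}
By Theorem \ref{thm.dist.spheres}(3), $|a_n - L|\leq \|\pi_*\rho_{2n}-\mu_0\| = o(r^{-n})$ for some $r>1$. In a $(d_1,d_2)$-biregular tree one has $b_n\asymp\lambda^n$ with $\lambda := (d_1-1)(d_2-1)>1$, so the denominator grows like $\lambda^R$. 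Splitting the numerator at $n = \lfloor R/2\rfloor$ and bounding $|a_n-L|\leq 2$ on the initial block while using the exponential decay on the tail gives contributions $O(\lambda^{-R/2})$ and $o(r^{-R})$ respectively, each of which is $o(c^{2R})$ provided $c>\max(\lambda^{-1/4},r^{-1/2})$. The main obstacle here is essentially absorbed into the already-established Theorem \ref{thm.dist.spheres}(3); the remaining step is a bookkeeping exercise in Abel-type summation, whose only subtle point is preserving the little-$o$ through the weighted average.
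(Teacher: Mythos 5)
Your proposal follows essentially the same route as the paper: the orbit-counting identity $N(2R)=|\Gamma_{\tilde o}|\sum_{n\le R}|S(\tilde o,2n)|\,\pi_*\rho_{2n}(o)$, the identification of the limit $\mu_0(o)=\proj_*m_X(o)/m_X(X)$ with $\proj_*m_X(o)=m(G_{\tilde o})/|\Gamma_{\tilde o}|$, and then Theorem~\ref{thm.dist.spheres}(3) to control the error. The paper leaves the final summation bookkeeping implicit, whereas you spell it out by splitting at $n=\lfloor R/2\rfloor$; that is fine, though your stated threshold $c>\max(\lambda^{-1/4},r^{-1/2})$ only covers the case $\lambda\ge r$ (when $\lambda<r$ the tail decays like $(\lambda r)^{-R/2}$ rather than $r^{-R}$), which does not affect the existence of some $c\in(0,1)$ and hence the theorem.
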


We stress that unlike before, we do not normalize the measure $m_X$ to be a probability measure. We also remark that the main term $\frac{m(G_{\tilde{o}})}{m_X(X)}$ can alternatively be expressed as $(\sum_{v \in V'Q} \frac{1}{|\Gamma \cap G_{\tilde{v}}|} )^{-1}$, where for every vertex $v$ of $Q=\Gamma \setminus T$, $\tilde{v} \in VT$ denotes a lift of $v$, $G_{\tilde{v}}$ is the maximal compact subgroup of $\Aut(T)$ fixing $\tilde{v}$ and $V'Q$ denotes the set of vertices $Q$ at even distance from $\pi(\tilde{o})$. Finally, we note that $\Aut(T)$ acts without edge inversion and this implies that for every $g \in \Aut(T)$ and $\tilde{v} \in VT$, $d(g\tilde{v},\tilde{v}) \in \mathbb{N}$ is an even number. This is the reason why, in the previous statement, we only consider vertices at even distance from each other.

We remark that this theorem also follows from the main result of Kwon in \cite{kwon} and from the work of Roblin \cite[Chapitre 4, Corollaire 2]{roblin}. Our proof relies on our previous result on the equidistribution of spheres (Theorem \ref{thm.dist.spheres}) and is a relatively straightforward consequence thereof. An exponential error rate $c \in (0,1)$ can also be effectively calculated.

\bigskip

The article is organized as follows. We recall some preliminary material mostly on lattices in groups acting on trees and set our notation in \S \ref{sec.preliminary}. In \S \ref{sec.rec}, we associate a natural Markov chain to an edge-indexed graph, study its properties and use these to prove Theorem \ref{thm.dm.geofin} for geometrically finite lattices. In \S \ref{sec.equidist}, we prove Theorem \ref{thm.equidist}. Theorems \ref{thm.escape} and \ref{thm.escape.and.equidist} are proven in \S \ref{sec.escape}. In \S \ref{sec.spheres}, we introduce an auxiliary Markov chain and use this to study the edge-indexed graph associated to a general lattice and prove Theorems \ref{thm.dist.spheres} and \ref{thm.counting}.

\subsection*{Acknowledgements}
The authors are thankful to Marc Burger and Manfred Einsiedler for helpful discussions. The authors also thank an anonymous referee for a careful reading, several remarks clarifying the exposition and helpful bibliographical suggestions. V.F.\ is supported by ERC Consolidator grant 648329 (GRANT).  C.S.\ is supported by SNF grants 178958 and 182089.

\section{Preliminaries}\label{sec.preliminary}

\subsection{Basic notation}\label{subsec.basic}
We denote by $T$ a $(d_1,d_2)$-regular tree, with $d_1,d_2\geq 3$, with $VT$ its set of vertices and $ET$, its edges. All edges are directed and $\partial_0, \partial_1 : ET \to VT$ are, respectively, the initial and the terminal vertex maps. An (ordered) pair of edges $e_1,e_2$ is called \textit{consecutive} if $\partial_1(e_1)=\partial_0(e_2)$. A sequence of consecutive edges $e_1,...,e_n$ is called a \textit{path} of length $n$. We also refer to it as a path between $\partial_0(e_1)$ and $\partial_1(e_n)$. The distance $d(\cdot,\cdot)$ between two vertices of the graph is  defined as the minimal length of a path between these vertices. 



We denote by $\Aut(T)$ the group of tree automorphisms acting  without edge inversion, i.e.~the group of automorphisms $g$ such that $d(gv,v)=0 \pmod 2$ for one (equivalently every) vertex $v \in VT$. When $d_1=d_2$, this is an index two subgroup of full group of automorphisms. Endowed with pointwise convergence topology, it is a locally compact, second countable group. In this article $G$ always stands for a non-compact, closed subgroup of $\Aut(T)$ that acts transitively on the boundary $\partial T$ of $T$. 

Throughout the rest of the article, we fix a basepoint $\tilde{o}\in VT$ and a distinguished end $\eta \in \partial T$, and denote by $(y_0, y_1, y_2 ,...)$ the vertices of the infinite path converging to $\eta$, where $y_0=\tilde{o}$.

For a subset $S\subset T$, and a subgroup $H<\Aut(T)$, $H_S$ denotes the pointwise stabilizer of $S$ in $H$. Given $\eta \in \partial T$, we define
\[  G_\eta^0 : =\{ g\in G ~|~ \exists N  , \forall n\geq N \quad  g(y_n)=y_n   \}. \]
The group $G_\eta^0$ is called \textbf{the horospherical subgroup} (see \cite[Section 2]{CFS} for more details on horospherical subgroups). It is a closed and amenable subgroup of $G$ and as mentioned in the introduction, one can construct many good F\o lner sequences in $G^0_\eta$. The following sequence of compact open subgroups of $G_\eta^0$ yields a good and tempered F\o lner sequence that is particularly convenient for our geometric approach. For $t \in \mathbb{N}$, we set
\[  F_t := \{ g\in G_\eta^0 ~|~   g(y_t)=y_t   \}. \]
In fact, as we shall see, thanks to the structure of good F\o lner sequences, it will be sufficient to prove our results only for the sequence $F_t$. Denote by $m_G$ and $m_{G_\eta^0}$ the Haar measures on $G$ and $G_\eta^0$, respectively. By $m_{F_t}$ we denote the Haar probability measure on $F_t$ which clearly coincides with the normalized restriction of $m_{G^0_\eta}$ to $F_t$.

\subsection{Lattices and theirs associated edge-indexed graphs}\label{subsec.edge.indexed}

It is well-known that a subgroup $\Gamma \leq G$ is discrete if and only if all vertex stabilizers $\Gamma_v$ for $v\in VT$ are finite. A discrete subgroup $\Gamma \leq G$ is called a lattice if $X=G/\Gamma$ admits a $G$-invariant Borel probability  measure, in which case we denote this measure by $m_X$.  By our standing assumption of boundary transitivity of $G$, the quotient graph $G\backslash T$ has two vertices. Indeed by \cite[Lemma 3.1.1]{burger-mozes.local}, $G$ acts two-transitively on $\partial T$ which in turn implies that $G$ has precisely two orbits on $VT$. Moreover, since $G$ acts without edge inversions, it acts transitively on the set of vertices of even distance. In this case, $\Gamma$ is a lattice in $G$ if and only if it is a lattice in $\Aut(T)$. Therefore, all lattices we will consider are tree lattices, i.e.\ lattices in $\Aut(T)$. For convenience, we will often call them lattices without specifying the ambient group. We refer to \cite{bass-lubotzky} for more details on tree lattices and edge-indexed graphs.

Given a discrete subgroup $\Gamma$, there is a useful construction \cite{bass-kulkarni} of a graph $Q$ and map $\ind: EQ \to \mathbb{N}$ as follows: the graph $Q$ is the quotient graph $\Gamma \backslash T$, which is well-defined, since $\Gamma$ acts without edge inversion. Denote by $\pi:T\to Q$ the projection map. 
The index map $\ind:EQ \to \bbN$ is given by $ \ind(e) = [ \Gamma_{\partial_0(\tilde{e})}  :\Gamma_{\tilde{e}}  ]$, where $\tilde{e}\in ET$ is any edge with $\pi(\tilde{e})=e$. This clearly does not depend on the choice of the lift $\tilde{e}$. The pair $(Q,\ind)$ is called the \textbf{edge-indexed graph} associated to $\Gamma <\Aut(T)$.

For $v\in VQ$, we define $\deg(v)$ to be the valency of any of its lifts $\tilde{v}$. By definition of the map $\ind$, 
\begin{equation}\label{eq:valency-gen}
    \deg(v)=\sum_{e\in EQ : \partial_0(e)=v}\ind(e).
\end{equation}

Further, let $\Delta: EQ \to \bbR$ given by $\Delta(e)=\frac{\ind(\overline{e})}{\ind(e)}$ and for $u,v\in VQ$ define
\begin{equation}\label{eq.defn.N}
N_u (v) = \Delta(e_1)...\Delta(e_n),
\end{equation}
where $(e_1,...,e_n)$ is a path from $u$ to $v$. For an  an edge-indexed graph $(Q,\ind)$ associated with a discrete subgroup $\Gamma$, the value of $N_u(v)$ does not depend on the choice of the path. Fixing a basepoint $o\in VQ$ (for convenience, we use $o=\pi(\tilde{o})$),  the discrete subgroup $\Gamma$ is a lattice in $G$ (see \cite[\S 1.1.5]{bass-lubotzky}) if and only if

\begin{equation}\label{eq:volume-formula}
\vol_o(Q,\ind):= \sum_{v\in VQ} N_o(v)^{-1} < \infty,     
\end{equation}
where $d(.,.)$ denotes the graph distance on $Q$. We shall refer to this quantity as the volume of the edge-indexed graph $(Q,\ind)$ based at $o$. 
We also remark that changing the base point from $o$ to $o'$ has the effect of multiplying the previous sum by the rational number $\frac{\Delta(o')}{\Delta(o)}$, therefore does not affect its finiteness. 

Conversely, one can define an abstract edge-indexed graph $(Q,\ind)$ as a tuple consisting of a graph $Q$ and map $\ind: EQ \to \mathbb{N}$. Under natural assumptions on the associated maps $\Delta$ and $N$ as above, there exists a discrete subgroup $\Gamma$ whose associated edge-indexed graph coincides with $(Q,\ind)$ and the function $N$ is proportional to $v \mapsto |\Gamma_{\tilde{v}}|$, where $\tilde{v}$ is any lift of $v$ (see \cite[page 23]{bass-lubotzky} or \cite{bass-kulkarni}).



For a discrete group $\Gamma \leq G$, we define the projection map $\proj : G / \Gamma \to VQ$ by $\proj(g \Gamma):= \pi(g^{-1} \tilde{o})= \Gamma g^{-1}\tilde{o}$. The map $\proj$ is clearly continuous and has compact fibers in $G/\Gamma$: for each $v \in VQ$ and $g\in G$ such that $\proj(g \Gamma)=v$, we have $\proj^{-1}(v)= G_{\tilde{o}} g \Gamma$. 
Moreover, the measure of each fiber is 
\begin{equation*}
     m_X(G_{\tilde{o}}g\Gamma) = m_X(g^{-1}G_{\tilde{o}}g\Gamma)= m_X (G_{g^{-1}\tilde{o}}\Gamma) = m_X(G_{\tilde{o}}\Gamma)\frac{|\Gamma_{\tilde{o}}|}{|\Gamma_{g^{-1}\tilde{o}}|}
\end{equation*}
In other words, using the definition \ref{eq.defn.N} of the map $N_o$, we have 
\begin{equation}\label{eq:measure-of-po-fiber}
    \proj_*m_X(v) =\frac{1}{N_o(v)} \proj_*m_X(o).
\end{equation}

\subsection{Geometrically finite lattices}
Following \cite{bass-serre,bass-lubotzky}, we define a \textbf{Nagao ray} to be an edge-indexed graph $(Q,\ind)$ whose underlying graph $Q$ is an infinite ray and the map $\ind$ takes value $1$ on all edges directed towards the infinity except the edge emanating from the vertex $o$ at the origin. All edges $e$ directed away from infinity are indexed by $\deg(\partial_1(e))-1$. Here, an edge $e \in EQ$ is said to be directed towards infinity if $d(\partial_1(e),o)>d(\partial_0(e),o)$, and directed away from infinity otherwise. See Fig.~\ref{fig.nagao} for an example of Nagao ray in $(q_1+1,q_2+1)$-biregular tree. An \textbf{open Nagao ray} is obtained by removing the origin vertex from a Nagao ray.

\begin{figure}[h!]
\centering

\tikzset{every picture/.style={line width=0.75pt}} 

\begin{tikzpicture}[x=0.75pt,y=0.75pt,yscale=-1,xscale=1.4]

\draw    (101.85,131) -- (149.5,131) ;
\draw [shift={(99.5,131)}, rotate = 0] [color={rgb, 255:red, 0; green, 0; blue, 0 }  ][line width=0.75]      (0, 0) circle [x radius= 3.35, y radius= 3.35]   ;
\draw    (153.85,131) -- (201.5,131) ;
\draw [shift={(151.5,131)}, rotate = 0] [color={rgb, 255:red, 0; green, 0; blue, 0 }  ][line width=0.75]      (0, 0) circle [x radius= 3.35, y radius= 3.35]   ;
\draw    (205.85,131) -- (253.5,131) ;
\draw [shift={(203.5,131)}, rotate = 0] [color={rgb, 255:red, 0; green, 0; blue, 0 }  ][line width=0.75]      (0, 0) circle [x radius= 3.35, y radius= 3.35]   ;
\draw    (257.85,131) -- (305.5,131) ;
\draw [shift={(255.5,131)}, rotate = 0] [color={rgb, 255:red, 0; green, 0; blue, 0 }  ][line width=0.75]      (0, 0) circle [x radius= 3.35, y radius= 3.35]   ;
\draw    (309.85,131) -- (357.5,131) ;
\draw [shift={(307.5,131)}, rotate = 0] [color={rgb, 255:red, 0; green, 0; blue, 0 }  ][line width=0.75]      (0, 0) circle [x radius= 3.35, y radius= 3.35]   ;

\draw (393,131) node  [align=left] {. . .};
\draw (160,142) node  [align=left] {$\displaystyle 1$};
\draw (216,142) node  [align=left] {$\displaystyle 1$};
\draw (265,142) node  [align=left] {$\displaystyle 1$};
\draw (318,142) node  [align=left] {$\displaystyle 1$};
\draw (106,142) node  [align=left] {$\displaystyle *$};
\draw (137,144) node  [align=left] {$\displaystyle q_1$};
\draw (192,144) node  [align=left] {$\displaystyle q_2$};
\draw (241,144) node  [align=left] {$\displaystyle q_1$};
\draw (295,144) node  [align=left] {$\displaystyle q_2$};
\draw (347,144) node  [align=left] {$\displaystyle q_1$};

\end{tikzpicture}
\caption{Nagao ray, when $T$ is $(q_1+1,q_2+1)$-biregular. By convention, for edge $e$, the index $\ind(e)$ is written next to the vertex $\partial_0(e)$}
\label{fig.nagao}
\end{figure}
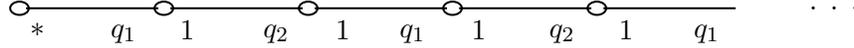

Following Paulin \cite{paulin.geo.fin}, a tree lattice $\Gamma$ is called \textbf{geometrically finite} if its associated edge-indexed graph $(Q,\ind)$ contains a finite subgraph $F$ whose set theoretic complement in $Q$ is a disjoint union of finitely many open Nagao rays. The \textbf{finite part} of $(Q,\ind)$ is the smallest non-empty finite subgraph $F$ with this property.
When $T$ is a $(q+1)$-regular tree, a tree lattice $\Gamma$ is called of \textbf{Nagao type} if the associated  edge-indexed graph is a Nagao ray (see \cite[Chapter 10]{bass-lubotzky}). Fig.~\ref{fig:nagaolattice} illustrates the corresponding edge-indexed graph. Another example of geometrically finite  lattice, where $T$ is $(3,10)$-biregular tree, is given in Fig.~\ref{fig:geom-fin}.

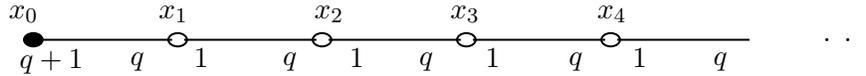
\begin{figure}[h!]
\centering

\tikzset{every picture/.style={line width=0.75pt}} 

\begin{tikzpicture}[x=0.75pt,y=0.75pt,yscale=-1,xscale=1.4]\label{fig:nagaoray}

\draw    (101.85,131) -- (149.5,131) ;
\draw [shift={(99.5,131)}, rotate = 0] [color={rgb, 255:red, 0; green, 0; blue, 0 }  ][fill={rgb, 255:red, 0; green, 0; blue, 0 }  ][line width=0.75]      (0, 0) circle [x radius= 3.35, y radius= 3.35]   ;
\draw    (153.85,131) -- (201.5,131) ;
\draw [shift={(151.5,131)}, rotate = 0] [color={rgb, 255:red, 0; green, 0; blue, 0 }  ][line width=0.75]      (0, 0) circle [x radius= 3.35, y radius= 3.35]   ;
\draw    (205.85,131) -- (253.5,131) ;
\draw [shift={(203.5,131)}, rotate = 0] [color={rgb, 255:red, 0; green, 0; blue, 0 }  ][line width=0.75]      (0, 0) circle [x radius= 3.35, y radius= 3.35]   ;
\draw    (257.85,131) -- (305.5,131) ;
\draw [shift={(255.5,131)}, rotate = 0] [color={rgb, 255:red, 0; green, 0; blue, 0 }  ][line width=0.75]      (0, 0) circle [x radius= 3.35, y radius= 3.35]   ;
\draw    (309.85,131) -- (357.5,131) ;
\draw [shift={(307.5,131)}, rotate = 0] [color={rgb, 255:red, 0; green, 0; blue, 0 }  ][line width=0.75]      (0, 0) circle [x radius= 3.35, y radius= 3.35]   ;
\draw (393,131) node  [align=left] {. . .};
\draw (150,118) node  [align=left] {$x_1$};
\draw (160,140) node  [align=left] {$1$};
\draw (216,140) node  [align=left] {$1$};
\draw (206,118) node  [align=left] {$x_2$};
\draw (265,140) node  [align=left] {$1$};
\draw (255,118) node  [align=left] {$x_3$};
\draw (318,140) node  [align=left] {$1$};
\draw (308,118) node  [align=left] {$x_4$};
\draw (106,142) node  [align=left] {$q+1$};
\draw (96,118) node  [align=left] {$x_0$};
\draw (137,142) node  [align=left] {$q$};
\draw (192,142) node  [align=left] {$q$};
\draw (241,142) node  [align=left] {$q$};
\draw (295,142) node  [align=left] {$q$};
\draw (347,142) node  [align=left] {$q$};

\end{tikzpicture}

\caption{Edge-indexed graph of a lattice of Nagao type. The index of $(x_0,x_1)$ is determined by \eqref{eq:valency-gen}. The finite part consists of the single vertex $x_0$. }
\label{fig:nagaolattice}
\end{figure}

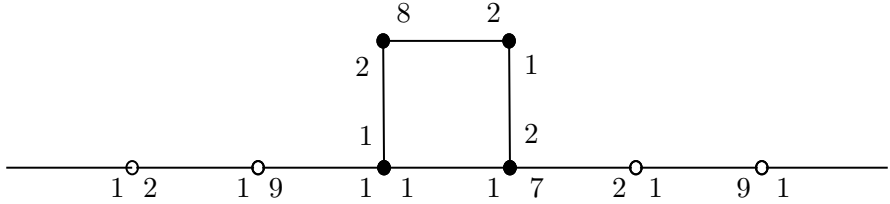
\begin{figure}[h!]
\tikzset{every picture/.style={line width=0.75pt}} 

    \centering

\begin{tikzpicture}[x=0.75pt,y=0.75pt,yscale=-1,xscale=0.87]

\draw    (507.85,182) -- (578.5,182) ;

\draw [shift={(505.5,182)}, rotate = 0] [color={rgb, 255:red, 0; green, 0; blue, 0 }  ][line width=0.75]      (0, 0) circle [x radius= 3.35, y radius= 3.35]   ;
\draw    (434.85,182) -- (503.15,182) ;
\draw [shift={(505.5,182)}, rotate = 0] [color={rgb, 255:red, 0; green, 0; blue, 0 }  ][line width=0.75]      (0, 0) circle [x radius= 3.35, y radius= 3.35]   ;
\draw [shift={(432.5,182)}, rotate = 0] [color={rgb, 255:red, 0; green, 0; blue, 0 }  ][line width=0.75]      (0, 0) circle [x radius= 3.35, y radius= 3.35]   ;
\draw    (361.85,182) -- (430.15,182) ;
\draw [shift={(432.5,182)}, rotate = 0] [color={rgb, 255:red, 0; green, 0; blue, 0 }  ][line width=0.75]      (0, 0) circle [x radius= 3.35, y radius= 3.35]   ;
\draw [shift={(359.5,182)}, rotate = 0] [color={rgb, 255:red, 0; green, 0; blue, 0 }  ][line width=0.75]      (0, 0) circle [x radius= 3.35, y radius= 3.35]   ;
\draw    (359.02,120.35) -- (359.48,179.65) ;
\draw [shift={(359.5,182)}, rotate = 89.55] [color={rgb, 255:red, 0; green, 0; blue, 0 }  ][line width=0.75] [fill={rgb, 255:red, 0; green, 0; blue, 0 }  ]     (0, 0) circle [x radius= 3.35, y radius= 3.35]   ;
\draw [shift={(359,118)}, rotate = 89.55] [color={rgb, 255:red, 0; green, 0; blue, 0 }  ][line width=0.75][fill={rgb, 255:red, 0; green, 0; blue, 0 }  ]      (0, 0) circle [x radius= 3.35, y radius= 3.35]   ;
\draw    (288.85,182) -- (357.15,182) ;
\draw [shift={(359.5,182)}, rotate = 0] [color={rgb, 255:red, 0; green, 0; blue, 0 }  ][line width=0.75]      (0, 0) circle [x radius= 3.35, y radius= 3.35]   ;
\draw [shift={(286.5,182)}, rotate = 0] [color={rgb, 255:red, 0; green, 0; blue, 0 }  ][line width=0.75]      (0, 0) circle [x radius= 3.35, y radius= 3.35]   ;
\draw    (215.85,182) -- (284.15,182) ;
\draw [shift={(286.5,182)}, rotate = 0] [color={rgb, 255:red, 0; green, 0; blue, 0 }  ][line width=0.75]      (0, 0) circle [x radius= 3.35, y radius= 3.35]   ;
\draw [shift={(213.5,182)}, rotate = 0] [color={rgb, 255:red, 0; green, 0; blue, 0 }  ][line width=0.75]  (0, 0) circle [x radius= 3.35, y radius= 3.35]   ;
\draw    (142.85,182) -- (211.15,182) ;
\draw [shift={(213.5,182)}, rotate = 0] [color={rgb, 255:red, 0; green, 0; blue, 0 }  ][line width=0.75]      (0, 0) circle [x radius= 3.35, y radius= 3.35]   ;
\draw [shift={(140.5,182)}, rotate = 0] [color={rgb, 255:red, 0; green, 0; blue, 0 }  ][line width=0.75]      (0, 0) circle [x radius= 3.35, y radius= 3.35]   ;
\draw    (286.48,179.65) -- (286.02,120.35) ;
\draw [shift={(286,118)}, rotate = 269.55] [color={rgb, 255:red, 0; green, 0; blue, 0 }  ][line width=0.75]  [fill={rgb, 255:red, 0; green, 0; blue, 0 }  ]    (0, 0) circle [x radius= 3.35, y radius= 3.35]   ;
\draw [shift={(286.5,182)}, rotate = 269.55] [color={rgb, 255:red, 0; green, 0; blue, 0 }  ][line width=0.75]  [fill={rgb, 255:red, 0; green, 0; blue, 0 }  ]    (0, 0) circle [x radius= 3.35, y radius= 3.35]   ;
\draw    (67.5,182) -- (140.5,182) ;

\draw    (288.35,118) -- (356.65,118) ;
\draw [shift={(359,118)}, rotate = 0] [color={rgb, 255:red, 0; green, 0; blue, 0 }  ][line width=0.75]      (0, 0) circle [x radius= 3.35, y radius= 3.35]   ;
\draw [shift={(286,118)}, rotate = 0] [color={rgb, 255:red, 0; green, 0; blue, 0 }  ][line width=0.75]      (0, 0) circle [x radius= 3.35, y radius= 3.35]   ;

\draw (132,192) node  [align=left] {$\displaystyle 1$};
\draw (151,192) node  [align=left] {$\displaystyle 2$};
\draw (205,192) node  [align=left] {$\displaystyle 1$};
\draw (444,192) node  [align=left] {$\displaystyle 1$};
\draw (518,192) node  [align=left] {$\displaystyle 1$};
\draw (224,192) node  [align=left] {$\displaystyle 9$};
\draw (276,192) node  [align=left] {$\displaystyle 1$};
\draw (300,192) node  [align=left] {$\displaystyle 1$};
\draw (276,166) node  [align=left] {$\displaystyle 1$};
\draw (274,131) node  [align=left] {$\displaystyle 2$};
\draw (298,104) node  [align=left] {$\displaystyle 8$};
\draw (350,104) node  [align=left] {$\displaystyle 2$};
\draw (372,165) node  [align=left] {$\displaystyle 2$};
\draw (372,130) node  [align=left] {$\displaystyle 1$};
\draw (350,192) node  [align=left] {$\displaystyle 1$};
\draw (375,192) node  [align=left] {$\displaystyle 7$};
\draw (423,192) node  [align=left] {$\displaystyle 2$};
\draw (495,192) node  [align=left] {$\displaystyle 9$};
\end{tikzpicture}

\caption{Edge-indexed graph associated with a geometrically finite lattice in $\Aut(T)$, where $T$ is $(3,10)$-biregular tree. The finite part contains  the solid nodes and the edges between them.}
    \label{fig:geom-fin}
\end{figure}


When $\Gamma$ is geometrically finite, we have a very useful characterization of compact $G_\eta^0$-orbits in $G/\Gamma$ (see \cite[Lemma 6.2]{CFS} or \cite[Proposition 3.1]{paulin.geo.fin}).

\begin{proposition}\label{prop.irrational}
Let $\Gamma\leq G$ geometrically finite lattice.  Let $g\in G$ be such that the $G_\eta^0$-orbit of $g\Gamma$ is not compact in $G/\Gamma$. Let $F$ denote the finite part of $Q=\Gamma \backslash T$. Then $\pi(g^{-1}y_t)$ belongs to $F$ for infinitely many values of $t$, in particular $t-d(\pi(g^{-1}y_t),F )$ is monotone non-decreasing and unbounded.
\end{proposition}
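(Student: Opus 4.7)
The plan is to first establish the monotonicity, then derive the infinitely-many-visits claim by contrapositive via a structural analysis of the preimage of a Nagao ray in $T$.

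Monotonicity is immediate: consecutive vertices $\pi(g^{-1}y_t)$ and $\pi(g^{-1}y_{t+1})$ are adjacent in $Q$ (since $\Gamma$ acts without edge inversion), so their graph distances to the subgraph $F$ differ by at most $1$. Hence
\[
(t+1) - d(\pi(g^{-1}y_{t+1}), F) \;\geq\; t - d(\pi(g^{-1}y_t), F),
\]
making $t \mapsto t - d(\pi(g^{-1}y_t), F)$ non-decreasing.

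For the main claim, assume toward a contradiction that $\pi(g^{-1}y_t) \notin F$ for all $t \geq T_0$. Because the path moves along edges of $Q$, its tail is trapped in a single connected component of $Q \setminus F$, which by geometric finiteness is an open Nagao ray $R_\infty = \{v_1, v_2, \ldots\}$ attached to $F$ via an edge $(v_0, v_1)$ with $v_0 \in F$. Let $\tilde{R}_\infty \subset T$ be the connected component of $\pi^{-1}(R_\infty)$ containing $g^{-1}y_{T_0}$. The Nagao indices impose a rigid structure on the subtree $\tilde{R}_\infty$: at every vertex over $v_k$, the outward index $1$ yields a \emph{unique} neighbor over $v_{k+1}$, while the inward neighbors in $\tilde{R}_\infty$ number $\deg(v_k)-1$ for $k \geq 2$ and $0$ for $k = 1$ (since all $\deg(v_1)-1$ inward neighbors of a level-$1$ vertex project to $v_0 \in F$ and thus lie outside $\tilde{R}_\infty$). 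Consequently the vertices over $v_1$ are leaves of $\tilde{R}_\infty$, the inward depth from any vertex is finite, and any infinite geodesic ray in $\tilde{R}_\infty$ must eventually progress strictly outward. Uniqueness of the outward neighbor then forces all such rays to converge to a common boundary point $\xi \in \partial T$. Applying this to the geodesic ray $(g^{-1}y_t)_{t \geq T_0}$ yields $g^{-1}\eta = \xi$.

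Finally, $\xi$ is a parabolic fixed point of $\Gamma$: by the standard theory of edge-indexed graphs (cf.\ \cite[Lemma 6.2]{CFS} or \cite[Proposition 3.1]{paulin.geo.fin}), $\Gamma \cap G_\xi^0$ is cocompact in $G_\xi^0$. Conjugating by $g$, the group $G_\eta^0 \cap g\Gamma g^{-1}$ is cocompact in $G_\eta^0$, so the $G_\eta^0$-orbit of $g\Gamma$ in $X$ is compact, contradicting the hypothesis. Thus $\pi(g^{-1}y_t) \in F$ infinitely often, at which times $d(\pi(g^{-1}y_t), F) = 0$, so combined with the monotonicity already established, $t - d(\pi(g^{-1}y_t), F)$ is unbounded. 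The main obstacle will be the structural lemma that $\tilde{R}_\infty$ is a ``horoball'' with a unique boundary endpoint; once that is in place, the identification of that endpoint as parabolic and the deduction of compactness of the orbit are then routine consequences of the characterisation of compact horospherical orbits.
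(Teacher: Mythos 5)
Your proof is correct. The paper itself does not supply an argument for Proposition \ref{prop.irrational}: it is presented as a recollection of the characterization of compact $G_\eta^0$-orbits from \cite[Lemma 6.2]{CFS} and \cite[Proposition 3.1]{paulin.geo.fin}. Your write-up reconstructs the underlying geometric argument in full. The monotonicity step is sound (adjacent vertices in $T$ project to adjacent --- not equal --- vertices in $Q$ because $\Gamma$ acts without edge inversion, so the distance to $F$ changes by at most one per step). For the contrapositive, the structural analysis of the preimage component $\tilde{R}_\infty$ is correct: the index $1$ on each outward edge gives a unique upward neighbor over each $v_k$, so $\tilde{R}_\infty$ is a connected subtree in which any geodesic is unimodal in the level coordinate, level-$1$ vertices are leaves, and hence every infinite ray eventually climbs strictly and all such rays funnel to a single end $\xi$; the geodesic ray $(g^{-1}y_t)_{t\geq T_0}$, having nowhere to descend, must therefore converge to $\xi$, forcing $g^{-1}\eta=\xi$. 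Identifying $\xi$ as a bounded parabolic end (hence the orbit compact) via the cited references is not circular, since those results furnish the characterization from which the present proposition is deduced. One cosmetic remark: you write that all inward neighbours of a level-$1$ vertex ``project to $v_0 \in F$''; in general the attaching vertex need not be unique, but all that matters --- and all you actually use --- is that they project into $F$ and hence lie outside $\tilde{R}_\infty$, so this does not affect the argument.
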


\subsection{Markov chains}

We recall some terminology and basic facts of the theory of Markov chains and set our notation. For more details, we refer the reader to \cite{DMPS,meyn-tweedie,LPW}.

Let $S$ be a countable set, and $P: S \times S \to [0,1]$ a Markov kernel, i.e.\ $\sum_{y \in S} P(x,y)=1$ for every $x \in S$. By (standard) abuse of notation, we shall also denote the associated Markov operator and its dual by $P$: for a function $f$ on $S$, $Pf(x)=\sum_{y}f(y)P(x,y)$. For a measure $\mu$ on $S$, $\mu P(\cdot)=\sum_{y} \mu(x)P(x,\cdot)$. 
For $n \in \mathbb{N}$, $P^n$ denotes the $n^{th}$-convolution power of $P$. For $s\in S$, we denote by $\delta_s$ the probability measure supported on $\{s\}$: for $s_1,s_2\in S$, $P^n(s_1,s_2):=\delta_{s_1}P^n(s_2)$.

The Markov kernel $P$ is called \textit{irreducible} if for every $s,t \in S$, there exists $n \in \mathbb{N}$ with $P^n(s,t)>0$. 
The \textit{period} of an irreducible Markov kernel $P$ is defined as $\gcd\{n \in \mathbb{N}\, |\, P^n(s,s)>0\}$ for some (or equivalently all) $s \in S$. If the period is $1$, the Markov chain is called \textit{aperiodic}. Denoting the period by $p$, there exists a partition $\Omega_0,\ldots,\Omega_{p-1}$ of the state space $S$ into \textit{cyclic classes} $\Omega_i$ such that for every $s \in \Omega_i$, $P(s,\Omega_{i+1})=1$ $(i \mod p)$. If $P$ is irreducible and has period $p$, then $P^p$ restricted to each cyclic class is irreducible and aperiodic. In a standard manner \cite[Section 3.1]{DMPS}, a Markov kernel yields a canonical Markov chain on the state space $S$. Therefore, we shall equivalently speak of a Markov chain being irreducible, aperiodic etc.\

A non-negative measure $\mu$ on $S$ is said to be \textit{stationary} for the Markov kernel $P$ if $\mu P=\mu$. 
An irreducible Markov kernel $P$ is called \textit{positive recurrent} if it admits a stationary probability measure, in which case this measure is unique.
If, moreover, $P$ has period $p$ then $\mu=\frac{1}{p}\sum_{i=0}^{p-1} \mu_{|\Omega_i}$ is a stationary measure of $P$, where $\mu_{|\Omega_i}$ is the unique stationary probability measure of $P^p$ restricted to $\Omega_i$. We also have $\mu_{\Omega_i} P=\mu_{\Omega_{i+1}}$ $(i \mod p)$.


For an irreducible aperiodic positive recurrent Markov chain and any initial distribution $\mu$, $\mu P^n $ converges to the stationary probability measure as $n\to \infty$. 
In case of an irreducible Markov chain that is not positive recurrent, $\mu P^n$ converges to $0$, regardless of the period.



\section{Non-escape of mass}\label{sec.rec}

The aim of this section is to prove Theorem \ref{thm.dm.geofin}. We start by associating a Markov chain with a tree lattice $\Gamma$, study its properties and eventually link the Markov chain to the study of orbital measures in $G/\Gamma$ of horospherical subgroups. 
If $\Gamma$ is a uniform lattice, there is nothing to prove in Theorem \ref{thm.dm.geofin}, so throughout the proof, $\Gamma$ is assumed to be non-uniform.

\subsection{The Markov chain}\label{subsec:mc-def}

Let $\Gamma$ be a tree lattice and $(Q,\ind)$ be the corresponding edge-indexed graph. Define the Markov chain $M_n$ with state space $EQ$ and transition probabilities given by 
\begin{equation*}
P(e_1,e_2) = \begin{cases}
 0 & \text{if } \partial_1(e_1) \neq \partial_0(e_2), \\
 \frac{\ind(e_2)-1}{\deg(\partial_1(e_1))-1} & \text{if }  e_2=\overline{e_1}, \\
 \frac{\ind(e_2)}{\deg(\partial_1(e_1))-1} & \text{otherwise. }
\end{cases}
\end{equation*}

Note that by \eqref{eq:valency-gen} transition probabilities sum to 1 so that $P$ is a Markov kernel. As the subsequent proofs will show, we are naturally led to the study of the Markov chain $M_n$ which can simply be seen as the image by quotient map $\pi$ of the simple random walk on the set of edges of the tree $T$. It came to our knowledge that this Markov chain was considered by Burger and Mozes \cite{bm-cat} in the study of the notion of divergence groups in $\Aut(T)$ and by Kwon \cite{kwon} in the study of mixing properties of the discrete geodesic flow. 


Let us illustrate the structure of this Markov chain as well as our subsequent use of it in a simple but important situation, that is when $\Gamma$ is lattice of Nagao type.

\begin{example}\label{ex:chain.on.nagao}
Let $\Gamma$ be Nagao lattice in $G\leq \Aut(T)$, where $T$ is a $(q+1)$-regular tree (see Fig.~\ref{fig:nagaolattice} for the corresponding edge-indexed graph). In this case, the above construction of Markov chain gives rise to a state space and transition probabilities as illustrated in Fig.~\ref{fig:nagaochain}. 

\begin{figure}[H]
    \centering

\tikzset{every picture/.style={line width=0.75pt}} 

\begin{tikzpicture}[x=0.75pt,y=0.75pt,yscale=-1,xscale=1]

\draw    (440.79,211.46) -- (535.75,211.46) ;
\draw [shift={(537.75,211.46)}, rotate = 180] [color={rgb, 255:red, 0; green, 0; blue, 0 }  ][line width=0.75]    (10.93,-3.29) .. controls (6.95,-1.4) and (3.31,-0.3) .. (0,0) .. controls (3.31,0.3) and (6.95,1.4) .. (10.93,3.29)   ;
\draw [shift={(440.79,211.46)}, rotate = 0] [color={rgb, 255:red, 0; green, 0; blue, 0 }  ][fill={rgb, 255:red, 0; green, 0; blue, 0 }  ][line width=0.75]      (0, 0) circle [x radius= 3.35, y radius= 3.35]   ;

\draw [shift={(537.79,211.46)}, rotate = 0] [color={rgb, 255:red, 0; green, 0; blue, 0 }  ][fill={rgb, 255:red, 0; green, 0; blue, 0 }  ][line width=0.75]      (0, 0) circle [x radius= 3.35, y radius= 3.35]   ;
\draw [shift={(149.79,109)}, rotate = 0] [color={rgb, 255:red, 0; green, 0; blue, 0 }  ][fill={rgb, 255:red, 0; green, 0; blue, 0 }  ][line width=0.75]      (0, 0) circle [x radius= 3.35, y radius= 3.35]   ;

\draw    (343.84,211.46) -- (438.79,211.46) ;
\draw [shift={(440.79,211.46)}, rotate = 180] [color={rgb, 255:red, 0; green, 0; blue, 0 }  ][line width=0.75]    (10.93,-3.29) .. controls (6.95,-1.4) and (3.31,-0.3) .. (0,0) .. controls (3.31,0.3) and (6.95,1.4) .. (10.93,3.29)   ;
\draw [shift={(343.84,211.46)}, rotate = 0] [color={rgb, 255:red, 0; green, 0; blue, 0 }  ][fill={rgb, 255:red, 0; green, 0; blue, 0 }  ][line width=0.75]      (0, 0) circle [x radius= 3.35, y radius= 3.35]   ;
\draw    (246.89,211.46) -- (341.84,211.46) ;
\draw [shift={(343.84,211.46)}, rotate = 180] [color={rgb, 255:red, 0; green, 0; blue, 0 }  ][line width=0.75]    (10.93,-3.29) .. controls (6.95,-1.4) and (3.31,-0.3) .. (0,0) .. controls (3.31,0.3) and (6.95,1.4) .. (10.93,3.29)   ;
\draw [shift={(246.89,211.46)}, rotate = 0] [color={rgb, 255:red, 0; green, 0; blue, 0 }  ][fill={rgb, 255:red, 0; green, 0; blue, 0 }  ][line width=0.75]      (0, 0) circle [x radius= 3.35, y radius= 3.35]   ;
\draw    (149.93,211.46) -- (244.89,211.46) ;
\draw [shift={(246.89,211.46)}, rotate = 180] [color={rgb, 255:red, 0; green, 0; blue, 0 }  ][line width=0.75]    (10.93,-3.29) .. controls (6.95,-1.4) and (3.31,-0.3) .. (0,0) .. controls (3.31,0.3) and (6.95,1.4) .. (10.93,3.29)   ;
\draw [shift={(149.93,211.46)}, rotate = 0] [color={rgb, 255:red, 0; green, 0; blue, 0 }  ][fill={rgb, 255:red, 0; green, 0; blue, 0 }  ][line width=0.75]      (0, 0) circle [x radius= 3.35, y radius= 3.35]   ;
\draw    (442.79,109) -- (537.75,109) ;
\draw [shift={(537.75,109)}, rotate = 0] [color={rgb, 255:red, 0; green, 0; blue, 0 }  ][fill={rgb, 255:red, 0; green, 0; blue, 0 }  ][line width=0.75]      (0, 0) circle [x radius= 3.35, y radius= 3.35]   ;
\draw [shift={(440.79,109)}, rotate = 0] [color={rgb, 255:red, 0; green, 0; blue, 0 }  ][line width=0.75]    (10.93,-3.29) .. controls (6.95,-1.4) and (3.31,-0.3) .. (0,0) .. controls (3.31,0.3) and (6.95,1.4) .. (10.93,3.29)   ;
\draw    (345.84,109) -- (440.79,109) ;
\draw [shift={(440.79,109)}, rotate = 0] [color={rgb, 255:red, 0; green, 0; blue, 0 }  ][fill={rgb, 255:red, 0; green, 0; blue, 0 }  ][line width=0.75]      (0, 0) circle [x radius= 3.35, y radius= 3.35]   ;
\draw [shift={(343.84,109)}, rotate = 0] [color={rgb, 255:red, 0; green, 0; blue, 0 }  ][line width=0.75]    (10.93,-3.29) .. controls (6.95,-1.4) and (3.31,-0.3) .. (0,0) .. controls (3.31,0.3) and (6.95,1.4) .. (10.93,3.29)   ;
\draw    (248.89,109) -- (343.84,109) ;
\draw [shift={(343.84,109)}, rotate = 0] [color={rgb, 255:red, 0; green, 0; blue, 0 }  ][fill={rgb, 255:red, 0; green, 0; blue, 0 }  ][line width=0.75]      (0, 0) circle [x radius= 3.35, y radius= 3.35]   ;
\draw [shift={(246.89,109)}, rotate = 0] [color={rgb, 255:red, 0; green, 0; blue, 0 }  ][line width=0.75]    (10.93,-3.29) .. controls (6.95,-1.4) and (3.31,-0.3) .. (0,0) .. controls (3.31,0.3) and (6.95,1.4) .. (10.93,3.29)   ;
\draw    (151.93,109) -- (246.89,109) ;
\draw [shift={(246.89,109)}, rotate = 0] [color={rgb, 255:red, 0; green, 0; blue, 0 }  ][fill={rgb, 255:red, 0; green, 0; blue, 0 }  ][line width=0.75]      (0, 0) circle [x radius= 3.35, y radius= 3.35]   ;
\draw [shift={(149.93,109)}, rotate = 0] [color={rgb, 255:red, 0; green, 0; blue, 0 }  ][line width=0.75]    (10.93,-3.29) .. controls (6.95,-1.4) and (3.31,-0.3) .. (0,0) .. controls (3.31,0.3) and (6.95,1.4) .. (10.93,3.29)   ;
\draw    (246.89,211.46) -- (246.89,111) ;
\draw [shift={(246.89,109)}, rotate = 450] [color={rgb, 255:red, 0; green, 0; blue, 0 }  ][line width=0.75]    (10.93,-3.29) .. controls (6.95,-1.4) and (3.31,-0.3) .. (0,0) .. controls (3.31,0.3) and (6.95,1.4) .. (10.93,3.29)   ;

\draw    (343.84,211.46) -- (343.84,111) ;
\draw [shift={(343.84,109)}, rotate = 450] [color={rgb, 255:red, 0; green, 0; blue, 0 }  ][line width=0.75]    (10.93,-3.29) .. controls (6.95,-1.4) and (3.31,-0.3) .. (0,0) .. controls (3.31,0.3) and (6.95,1.4) .. (10.93,3.29)   ;

\draw    (440.79,211.46) -- (440.79,111) ;
\draw [shift={(440.79,109)}, rotate = 450] [color={rgb, 255:red, 0; green, 0; blue, 0 }  ][line width=0.75]    (10.93,-3.29) .. controls (6.95,-1.4) and (3.31,-0.3) .. (0,0) .. controls (3.31,0.3) and (6.95,1.4) .. (10.93,3.29)   ;

\draw    (148.45,209.36) .. controls (124.6,175.19) and (122.6,146.14) .. (149.93,109) ;

\draw [shift={(149.93,211.46)}, rotate = 234.28] [color={rgb, 255:red, 0; green, 0; blue, 0 }  ][line width=0.75]    (10.93,-3.29) .. controls (6.95,-1.4) and (3.31,-0.3) .. (0,0) .. controls (3.31,0.3) and (6.95,1.4) .. (10.93,3.29)   ;
\draw    (151.22,110.58) .. controls (178.71,145.18) and (164.32,181.04) .. (149.93,211.46) ;

\draw [shift={(149.93,109)}, rotate = 50.21] [color={rgb, 255:red, 0; green, 0; blue, 0 }  ][line width=0.75]    (10.93,-3.29) .. controls (6.95,-1.4) and (3.31,-0.3) .. (0,0) .. controls (3.31,0.3) and (6.95,1.4) .. (10.93,3.29)   ;

\draw (576.93,162.51) node  [align=left] {. . .};
\draw (144,226) node  [align=left] {$\displaystyle ( x_{0} ,x_{1})$};
\draw (249,226) node  [align=left] {$\displaystyle ( x_{1} ,x_{2})$};
\draw (346,226) node  [align=left] {$\displaystyle ( x_{2} ,x_{3})$};
\draw (441,226) node  [align=left] {$\displaystyle ( x_{3} ,x_{4})$};
\draw (148,90) node  [align=left] {$\displaystyle ( x_{1} ,x_{0})$};
\draw (246,90) node  [align=left] {$\displaystyle ( x_{2} ,x_{1})$};
\draw (345,90) node  [align=left] {$\displaystyle ( x_{3} ,x_{2})$};
\draw (440,90) node  [align=left] {$\displaystyle ( x_{4} ,x_{3})$};
\draw (207,199) node  [align=left] {$\displaystyle \tfrac{1}{q}$};
\draw (300,199) node  [align=left] {$\displaystyle \tfrac{1}{q}$};
\draw (390,199) node  [align=left] {$\displaystyle \tfrac{1}{q}$};
\draw (490,199) node  [align=left] {$\displaystyle \tfrac{1}{q}$};
\draw (200,119) node  [align=left] {$\displaystyle 1$};
\draw (296,119) node  [align=left] {$\displaystyle 1$};
\draw (397,119) node  [align=left] {$\displaystyle 1$};
\draw (492,119) node  [align=left] {$\displaystyle 1$};
\draw (264,160) node  [align=left] {$\displaystyle \tfrac{q-1}{q}$};
\draw (359,160) node  [align=left] {$\displaystyle \tfrac{q-1}{q}$};
\draw (458,160) node  [align=left] {$\displaystyle \tfrac{q-1}{q}$};
\draw (185,160) node  [align=left] {$\displaystyle \tfrac{q-1}{q}$};
\draw (120,156) node  [align=left] {$\displaystyle 1$};

\end{tikzpicture}

\caption{Transition probabilities of $M_n$ when $\Gamma$ is a lattice of Nagao type (for the labeling of edges, see Fig.~\ref{fig:nagaolattice}).}
    \label{fig:nagaochain}
\end{figure}
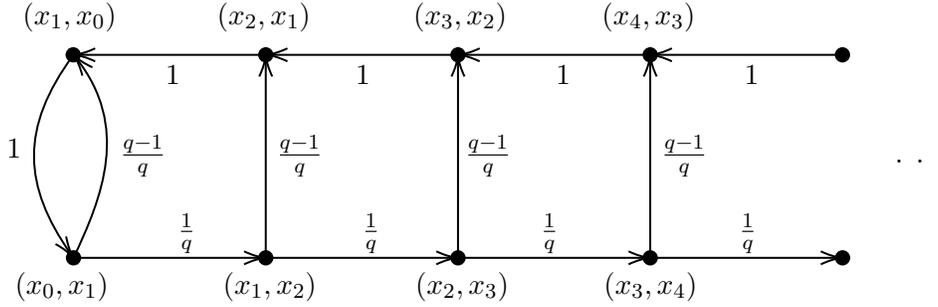

Consider a random trajectory of this Markov chain on its state space as depicted in the previous figure. The key phenomenon for us in this example is that once the trajectory turns toward the finite part (here, this corresponds to the edges facing left or up), it must deterministically walk all the way toward the finite part without a chance to turn around. This feature entails very strong recurrence properties which will allow us to control hitting times and, eventually, deduce  convergence of the Markov chain to the stationary measure (up to issues of periodicity) even with moving starting point. The latter property is crucial for Theorem \ref{thm.dm.geofin}. 
\end{example}

\subsubsection{Basic properties}

\begin{lemma}\label{lem:MCirred}
Let $\Gamma$ be a tree lattice. The associated Markov chain $M_n$ is irreducible. 
\end{lemma}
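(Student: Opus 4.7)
The plan is to interpret $M_n$ as the image under $\pi$ of the (non-backtracking) simple random walk on $ET$, and then to use density of $\Gamma$-orbits at infinity coming from $\Gamma$ being a tree lattice.

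The first step is to unpack the definition of $P$. Since $\ind(e)$ counts the number of edges of $T$ emanating from any fixed lift of $\partial_0(e)$ that project to $e$, the identity I would verify is the following: if $\tilde{e}_1 \in ET$ is any lift of $e_1 \in EQ$ and $\tilde{e}_2$ is chosen uniformly among the $\deg(\partial_1(\tilde{e}_1))-1$ edges of $T$ with $\partial_0(\tilde{e}_2)=\partial_1(\tilde{e}_1)$ and $\tilde{e}_2 \neq \overline{\tilde{e}_1}$, then the law of $\pi(\tilde{e}_2)$ equals $P(e_1,\cdot)$ (the two cases in the definition of $P$ come from whether or not $\overline{\tilde{e}_1}$ itself is among the $\ind(\overline{e_1})$ lifts of $\overline{e_1}$ at $\partial_1(\tilde{e}_1)$). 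Iterating this identity, one has $P^n(e,e') > 0$ for some $n$ if and only if, for some (equivalently, every) lift $\tilde{e}$ of $e$, some lift of $e'$ can be reached from $\tilde{e}$ by a non-backtracking path in $T$.

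The second step identifies the reachable set: in a tree, the edges reachable from $\tilde{e}$ by non-backtracking paths are exactly those in the forward half-tree $H(\tilde{e})$ consisting of $\tilde{e}$ together with all edges lying past $\partial_1(\tilde{e})$. Hence irreducibility of $M_n$ reduces to the purely geometric statement that for every $e' \in EQ$ and every lift $\tilde{e}' \in \pi^{-1}(e')$, the orbit $\Gamma\tilde{e}'$ meets $H(\tilde{e})$.

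The final, main step is to invoke the classical fact that, for a tree lattice $\Gamma$, the limit set $\Lambda(\Gamma) \subset \partial T$ equals all of $\partial T$. Granting this, the shadow of $\tilde{e}$ seen from $\partial_0(\tilde{e})$, namely the set of ends $\xi$ such that the geodesic ray from $\partial_0(\tilde{e})$ to $\xi$ passes through $\partial_1(\tilde{e})$, is a non-empty open subset of $\partial T$; density of $\Gamma\cdot \partial_0(\tilde{e}')$ in $\partial T$ then produces $\gamma \in \Gamma$ with $\gamma \partial_0(\tilde{e}')$ arbitrarily deep in the forward subtree, and then also $\gamma\partial_1(\tilde{e}')$ lies there, so $\gamma\tilde{e}' \in H(\tilde{e})$. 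The only delicate ingredient is the density claim $\Lambda(\Gamma)=\partial T$; a self-contained alternative is to apply a ping-pong argument on two hyperbolic elements of $\Gamma$, available by non-elementarity of any tree lattice, to produce a hyperbolic $\gamma_0 \in \Gamma$ whose attracting fixed point lies in the prescribed open shadow of $\tilde{e}$, so that high powers of $\gamma_0$ translate $\tilde{e}'$ into $H(\tilde{e})$.
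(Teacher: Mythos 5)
Your approach is genuinely different from the paper's. You lift to $T$, identify $M_n$ with the $\pi$-image of the non-backtracking walk on $ET$, and reduce irreducibility to the statement that for every directed edge $e'\in EQ$ the $\Gamma$-orbit of a lift of $e'$ meets the forward half-tree $H(\tilde e)$ (in the oriented sense); you then invoke $\Lambda(\Gamma)=\partial T$. The paper instead argues inside $Q$: it reduces to showing $P^n(e,\bar e)>0$ for consecutive edges, exhibits a non-backtracking path from $e$ terminating at an edge with $\ind(\bar e_n)>1$ (so the path can be retraced with positive probability), and rules out the alternative --- all such paths having $\ind(\bar e_n)=1$ --- via $N_{\partial_0(e)}\leq 1$ along the path together with the volume formula \eqref{eq:volume-formula}. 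Both ultimately rest on volume finiteness; you filter it through the limit set, the paper uses it head-on. Your identification of $M_n$ as the projected non-backtracking walk and the reduction to orbit-meets-half-tree are correct and parallel computations the paper itself uses later (cf.\ Lemma~\ref{lemma.mc.spheres}).

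Your final step, however, has two gaps. First, $\Lambda(\Gamma)=\partial T$ is true for tree lattices but is not a formality for non-uniform $\Gamma$; it is itself a volume-finiteness statement (a proper invariant subtree forces uniformly bounded vertex stabilizers along an entire infinite branch, contradicting \eqref{eq:volume-formula}). Your ``self-contained'' ping-pong alternative does not close this loop: ping-pong produces hyperbolic elements of $\Gamma$, but finding one whose attracting fixed point lands in a \emph{prescribed} open shadow of $\tilde e$ already presupposes that hyperbolic fixed points of $\Gamma$ are dense in $\partial T$, which is exactly $\Lambda(\Gamma)=\partial T$. Second, there is an orientation issue you pass over: making $\gamma\partial_0(\tilde e')$ deep in the forward subtree and noting that $\gamma\partial_1(\tilde e')$ is a neighbor only places the \emph{undirected} edge $\gamma\tilde e'$ inside the half-tree; it does not force $\gamma\tilde e'$ to point away from $\partial_0(\tilde e)$, which is what reachability by a non-backtracking path requires. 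If it points inward, you have exhibited a lift of $\bar e'$ in $H(\tilde e)$, not of $e'$. The fix is to arrange that $\gamma$ carries the shadow of $\tilde e'$ into the shadow of $\tilde e$, for instance by taking $\gamma$ hyperbolic with attracting point in the shadow of $\tilde e$ and repelling point in the shadow of $\overline{\tilde e'}$ and replacing $\gamma$ by a high power; but producing such a $\gamma$ again relies on the density statement you have not established. Both gaps are fillable, but as written the proof is not complete, whereas the paper's path-plus-index argument avoids the boundary entirely.
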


\begin{proof}
Since the graph $Q$ is connected, it is  sufficient to show that for any two edges $e,f\in EQ$, such that $\partial_1 e=\partial_0 f$, we have $P^n(e,f)>0$ for some $n\geq 1$. If $f\neq \overline{e}$, this holds for $n=1$ by definition of $P$. 

When $f= \overline{e}$, we claim that there exists $n\geq 0$ and a finite non-backtracking path of edges $(e=e_0,e_1, \ldots, e_n)$ (i.e. $\partial_1 e_i = \partial_0 e_{i+1}$ and $\partial_0 e_i \neq \partial_1 e_{i+1}$ for all $0\leq i<n$) such that $\ind(\overline{e}_n)>1$. Note that for all $i$, the transition probabilities $P(e_i,e_{i+1}), P(\overline{e_{i+1}},\overline{e_i}), P(e_n,\overline{e}_n)$ are all positive, implying $P_{2n+1}(e,\overline{e})>0$.

We show the existence of a path as above by contradiction. Suppose for any non-backtracking finite path starting at $e$ we have $\ind(\overline{e}_n)=1$. Such a path cannot end at a leaf, since then $\ind(\overline{e_n})=\deg(\partial_0 \overline{e_n})=\deg(\partial_1 e_n)>2$ by \eqref{eq:valency-gen}.  Hence, we can extend it to produce an infinite non-backtracking path with $\ind(\overline{e_i})=1$ for all $i\in \bbN$. In particular, $N_{\partial_0(e)}(e_i) \leq 1$ for all $i$, which contradicts the finiteness of the volume in \eqref{eq:volume-formula}.
\end{proof}

In the case of  geometrically finite lattices, we will prove positive recurrence of the associated Markov chain $M_n$ using Foster's drift criterion. Positive recurrence of $M_n$ in the setting of general tree lattices, which is required in the proof of Theorems \ref{thm.escape} and \ref{thm.dist.spheres}, is shown in Proposition \ref{prop.pos.rec} with a slightly more elaborate proof.
 
Assume $\Gamma$ is a geometrically finite tree lattice, $(Q,\ind)$ its associated edge-index graph and $F$ the finite part of $Q$.  For $e\in EQ$, we use the notation $|e|:=d(\partial_1(e),F)$ to indicate the distance between an edge and the finite part $F$. For $e\notin F$, we say that $e$ is \textit{oriented toward the finite part} if $d(\partial_1(e), F) < d(\partial_0(e), F)$, and \textit{oriented toward the cusp} otherwise.

\begin{lemma}\label{lem:recur-geomfin}
Let $\Gamma$ be a geometrically finite lattice. Then, the associated Markov chain $M_n$ is positive recurrent.
\end{lemma}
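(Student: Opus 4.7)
My plan is to apply Foster's drift criterion. Since Lemma \ref{lem:MCirred} gives irreducibility of $M_n$, positive recurrence will follow once I produce a non-negative function $V$ on $EQ$, a finite set $K \subset EQ$, and $\varepsilon > 0$ such that $PV(e) \le V(e) - \varepsilon$ for all $e \notin K$ while $\sup_{e \in K} PV(e) < \infty$.

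First I would unpack the chain's behavior on each open Nagao ray of $Q \setminus F$. Label the vertices of such a ray $x_0, x_1, x_2, \ldots$ with $x_0 \in F$ the attachment point, and write $e_n = (x_{n-1}, x_n)$ and $\bar e_n = (x_n, x_{n-1})$. The Nagao indexing forces $\ind(e_n) = 1$ and a value $r_n := \ind(\bar e_n) \ge 2$ that is one less than the tree-degree of $x_{n-1}$ (hence uniformly bounded below by $\min(d_1,d_2)-1 \ge 2$ by the standing hypothesis). Plugging these into the transition formula yields, for $n \ge 2$,
\[
P(e_n, e_{n+1}) = \tfrac{1}{r_n}, \qquad P(e_n, \bar e_n) = \tfrac{r_n - 1}{r_n}, \qquad P(\bar e_n, \bar e_{n-1}) = 1.
\]
Thus outside $F$ the chain descends deterministically along $F$-oriented edges, and on cusp-oriented edges it turns back toward $F$ with probability at least $1/2$.

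Next I would take $K$ to be the finite set consisting of the edges of $F$ together with the boundary edges $e_1, \bar e_1$ of each of the finitely many Nagao rays, and use the Lyapunov function
\[
V(e) = d(\partial_1 e, F) + \mathbf{1}_{\{e \text{ cusp-oriented}\}}.
\]
For a cusp edge $e_n$ with $n \ge 2$, a direct first-step computation gives
\[
PV(e_n) - V(e_n) = \frac{(r_n-1)(n-1) + (n+2)}{r_n} - (n+1) = -2 + \frac{3}{r_n} \le -\frac{1}{2},
\]
while for an $F$-oriented $\bar e_n$ with $n \ge 2$ the deterministic transition yields $PV(\bar e_n) - V(\bar e_n) = -1$. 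Outside $K$ the drift is therefore uniformly at most $-1/2$, and $PV$ is clearly bounded on the finite $K$. Foster's criterion then delivers positive recurrence.

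The one delicate point motivating this choice of $V$ is the borderline regime $r_n = 2$, which arises on Nagao rays in biregular trees with some $d_i = 3$. The simpler candidate $V(e) = d(\partial_1 e, F)$ gives drift $-1 + 2/r_n$ on cusp edges, degenerating to zero when $r_n = 2$; the cusp indicator shifts the drift by an additional $-(r_n-1)/r_n \le -1/2$, restoring strict negativity uniformly in $r_n \ge 2$. As a safety net, if the drift bookkeeping proves awkward (for instance because $r_n$ alternates along biregular rays), a robust alternative is to verify positive recurrence directly by showing that the expected hitting time of $F$ is finite from every state: on each ray the number of consecutive upward cusp steps is stochastically dominated by a geometric random variable of parameter $\ge 1/2$, so cusp excursions have finite expected length, and the standard characterization of positive recurrence for irreducible chains closes the argument.
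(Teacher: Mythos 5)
Your proof is correct and follows the same strategy as the paper: combine irreducibility (Lemma \ref{lem:MCirred}) with Foster's drift criterion. The difference lies in the choice of Lyapunov function. The paper splits into two cases, taking $V(e) = (3/2)^{|e|}$ when $d_1, d_2 \geq 4$ and a hybrid (linear on $F$-oriented edges, scaled exponential on cusp-oriented ones) when $d_1 = d_2 = 3$, precisely because of the degenerate drift you identify; your single function $V(e) = |e| + \mathbf{1}_{\{e \text{ cusp-oriented}\}}$ handles both regimes uniformly and yields a cleaner argument. One small slip in the prose: by the valency formula \eqref{eq:valency-gen}, $r_n = \ind(\bar e_n) = \deg(\partial_0 \bar e_n) - 1 = \deg(x_n) - 1$, not $\deg(x_{n-1}) - 1$; on a genuinely biregular tree these differ as the degrees alternate along the ray. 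This does not affect your computations, since you use $r_n$ only as the denominator $\deg(\partial_1 e_n) - 1 = \deg(x_n) - 1$, the lower bound $r_n \geq \min(d_1,d_2) - 1 \geq 2$ still holds, and the drift $-2 + 3/r_n \leq -1/2$ follows. The hitting-time fallback via geometric domination is also sound.
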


\begin{proof}
For $d_1,d_2 \geq 4$, one easily verifies that for any $e\in EQ$, setting
\[  V(e) = (3/2)^{|e|}\]
and letting $P$ to be the Markov operator corresponding to $M_n$, we have $PV(e) <\infty $  for all $e\in F$ and
\[  PV(e) \leq V(e) -1/8 , \quad \text{ for all } e\in EQ \setminus F. \]

In the case $d_1=d_2=3$, a slightly different function $V$ (which also works for the previous case) does the job:

Let 

\[  V(e)=\begin{cases}
0 & \text{if } e\in F, \\
 |e| & \text{if } e \text{ is oriented toward the finite part,} \\
 100(3/2)^{|e|} & \text{otherwise}.
\end{cases} \]
We have 
\[  PV(e) \leq V(e) -1 , \quad \text{ for all } e\in EQ \setminus F. \]

By Lemma \ref{lem:MCirred}, $M_n$ is irreducible. Hence, by Foster's drift Criteria \cite[Chapter 13]{meyn-tweedie}, $M_n$ is positive recurrent.
\end{proof}

A simple combinatorial observation allows us to show that when $\Gamma$ is geometrically finite, the period of the Markov chain $M_n$ is two. This is expressed in the following lemma:

\begin{lemma}\label{lemma.period}
If there exist two edges $e,f\in EQ$ such that $e\neq \overline{f}$, $\partial_1(f)=\partial_0(e)$ and $\ind(e),\ind(f)>1$, then the period of $M_n$ is $2$.
\end{lemma}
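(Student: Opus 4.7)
I would prove the lemma in two parts: first that the period is at least $2$, then that it divides $2$.

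For the lower bound, I would use the bipartite structure. Since $T$ is biregular its vertex set has a canonical $2$-coloring in which adjacent vertices receive different colors, and since $\Gamma \leq \Aut(T)$ acts without edge inversion, this descends to a bipartition $VQ = V_1 \sqcup V_2$. Correspondingly $EQ$ splits as $E_{12} \sqcup E_{21}$, where $E_{ij}$ consists of edges with $\partial_0 \in V_i$ and $\partial_1 \in V_j$. The defining condition $\partial_0(e_2) = \partial_1(e_1)$ for a valid Markov transition forces every step of $M_n$ to switch between $E_{12}$ and $E_{21}$, so every closed walk has even length and the period $p$ is even; in particular $p \geq 2$.

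For the upper bound, the plan is to exhibit two walks from $\overline{e}$ to $e$ whose lengths differ by $2$. The direct one-step walk $\overline{e} \to e$ has positive probability since $\ind(e) > 1$ implies $P(\overline{e},e) = (\ind(e)-1)/(\deg(v)-1) > 0$. The three-step walk $\overline{e} \to \overline{f} \to f \to e$ is also positive step by step: the first transition uses $\overline{f} \neq \overline{\overline{e}} = e$ (the hypothesis $e \neq \overline{f}$) together with $\ind(\overline{f}) \geq 1$; the second uses $\ind(f) > 1$; and the third uses $e \neq \overline{f}$ together with $\ind(e) \geq 1$.

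Now irreducibility of $M_n$ (Lemma \ref{lem:MCirred}) produces some $M$ with $P^M(e, \overline{e}) > 0$, and the bipartite argument forces $M$ to be odd (since $e \in E_{ij}$ and $\overline{e} \in E_{ji}$ belong to opposite classes). Concatenating this return path with each of the two walks above yields closed walks at $\overline{e}$ of lengths $1+M$ and $3+M$, both even. Since $\gcd(1+M,\,3+M) = \gcd(1+M, 2) = 2$, we deduce $p \mid 2$, and combined with $p \geq 2$ this forces $p = 2$. The only delicate point in this plan is verifying that all three transitions of the length-$3$ walk remain positive even when neither $\ind(\overline{e})$ nor $\ind(\overline{f})$ exceeds $1$; this is where the hypothesis $e \neq \overline{f}$ (rather than just $\ind(e), \ind(f) > 1$) is used essentially.
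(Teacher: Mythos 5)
Your proof is correct and follows essentially the same route as the paper: both construct a closed walk of some length $m$ (using irreducibility together with $P(\overline{e},e)>0$) and one of length $m+2$ (via the detour $\overline{e}\to\overline{f}\to f\to e$), concluding $p\mid 2$, and both use the bipartition of $VT$ to rule out $p=1$. Your version is slightly more careful in spelling out why each of the three transitions in the detour has positive probability, but the underlying idea and structure are identical to the paper's argument.
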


When $\Gamma$ is geometrically finite, one can simply take $e,f$ to be two consecutive edges in a Nagao ray oriented toward the finite part so that the lemma applies. 

\begin{proof}
Let $m-1$ be the length of a path from $e$ to $\bar{e}$ along edges with positive transition probabilities. Since $\ind(e)>1$, $P(\overline{e},e)>0$, hence there is a loop of length $m$ with positive transition probabilities along all edges. 
On the other hand, after the previous loop, one can follow the path from $e$ to $\overline{e}$, continue to $\overline{f}$, then to $f$ and finally back to $e$. This is a loop of length $m+2$. Hence, the period  divides by $m$ and $m+2$, which forces it to be $1$ or $2$. On the other hand, since $\Gamma$ action on $T$ preserves a partition into two sets of vertices (thanks to the assumption that $\Aut(T)$ acts without edge inversion), hence the period cannot be $1$, proving the claim.
\end{proof}

\subsubsection{Hitting time of the finite part}\label{subsub:returntime}
Let $F$ be the finite part of the graph $Q$. For the Markov chain $M_n$, we denote by $\tau$ the first hitting time of $F$ i.e.\ $\tau=\min \{n\in \mathbb{N} \, | \, \partial_1(M_n) \in F \}$. By positive recurrence, $\tau$ is finite almost surely. To deal with periodicity, define $\tau':= \min \{ n\in \bbN ~|~ n \geq \tau, 2|n \}$.



We start by a lemma that controls the probabilities of long hitting times of the finite part.

\begin{lemma}\label{lem:first_return}
Assume $\Gamma$ is geometrically finite. Let $\tau$ be the hitting time defined above. Then for any $e\in EQ$

\[
 \mathbb{P}_e ( \tau = i ) \leq  \begin{cases}
     q^{\lceil \frac{|e|-i}{2} \rceil}& \text{if } i\geq |e|,\\
    0              & \text{otherwise,}
\end{cases}
\]     
where $q+1=\min \{d_1 ,d_2\} $.
The same bound is true for $\tau'$ up to multiplicative constant $C=C(d_1,d_2)$.
\end{lemma}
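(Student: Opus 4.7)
The plan is to exploit the random-walk structure on an open Nagao ray. For $i < |e|$, the bound is trivial: since the chain moves $\partial_1(M_n)$ by a single edge per step, it cannot reach $F$ from distance $|e|$ in fewer than $|e|$ steps. For $i \geq |e|$, we may assume $e \notin F$ (otherwise $|e| = \tau = 0$ and the bound gives $1$). By geometric finiteness, $e$ belongs to a unique open Nagao ray $R$ with origin $o' \in F$; since all edges incident to a vertex $y_j \in R$ with $j \geq 1$ are edges of $R$, the chain $(M_n)_{n \leq \tau}$ never leaves $R$.

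The key structural fact is that on $R$, once the chain heads back toward $o'$, its descent is deterministic. Labelling the vertices of $R$ as $o' = y_0, y_1, y_2, \ldots$, the Nagao ray rule gives $\ind((y_j, y_{j+1})) = 1$ for all $j \geq 1$, and combining this with \eqref{eq:valency-gen} one computes from the definition of the transition kernel that from any edge $(y_{j+1}, y_j)$ the chain moves to $(y_j, y_{j-1})$ with probability $1$, while from an edge $(y_{j-1}, y_j)$ it continues to $(y_j, y_{j+1})$ with probability $1/(\deg(y_j) - 1) \leq 1/q$ and turns back with the complementary probability. Here $q + 1 = \min(d_1, d_2)$.

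With this structure, the enumeration of trajectories is immediate. If $e$ is oriented toward $o'$ with $|e| = k$, then $\tau = k$ deterministically. If $e$ is oriented away from $o'$, then any trajectory achieving a given $\tau = i$ consists of a unique sequence of $m$ continue-away steps, followed by one turn-back step, followed by $|e| + m - 1$ deterministic descent steps, where $m = (i - |e|)/2 \geq 0$. In particular $\tau$ has the same parity as $|e|$, and the probability of such a trajectory is at most $q^{-m} = q^{(|e| - i)/2}$, yielding $\mathbb{P}_e(\tau = i) \leq q^{\lceil (|e| - i)/2 \rceil}$ in all cases.

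Finally, for $\tau'$: since $\tau' \in \{\tau, \tau + 1\}$ and $\tau'$ is always even, $\mathbb{P}_e(\tau' = i) \leq \mathbb{P}_e(\tau = i) + \mathbb{P}_e(\tau = i - 1)$, which by the previous bound is at most $(1 + q)\, q^{\lceil (|e| - i)/2 \rceil}$, giving the result with $C = 1 + q$. I expect no serious obstacle; the only mild subtlety is keeping track of the biregular (rather than regular) alternating index structure of the Nagao ray when verifying the deterministic-descent formula, but the uniform estimate $1/(\deg(y_j)-1) \leq 1/q$ is all that is actually used in the bound.
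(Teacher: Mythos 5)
Your proof is correct and follows essentially the same strategy as the paper's: observe that until the hitting time the chain is confined to an open Nagao ray, on which any edge pointing toward $F$ produces a deterministic descent and any edge pointing toward the cusp moves one step further from $F$ with probability at most $1/q$, so a trajectory with $\tau=i$ must make $(i-|e|)/2$ low-probability forward moves before its single turn. You fill in the transition-probability computations that the paper's proof leaves implicit under the phrase ``because of the structure of Nagao rays,'' and your observation that $\tau$ shares the parity of $|e|$ (so the two terms in the union bound for $\tau'$ never both contribute) even gives a slightly sharper constant than $C=1+q$.
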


\begin{proof}
Clearly, random walk starting at $e$ can never hit $F$ in less than $|e|$ steps. Similarly when $|e|=0$, the claim is obvious. 
When $|e|>0$, by definition of geometric finiteness,  $\partial_1(e)$ belongs to some Nagao ray. Because of the structure of Nagao rays (see Example \ref{ex:chain.on.nagao}), a Markov trajectory starting at an edge oriented toward the finite part $F$ must necessarily take at least one step toward $F$. This can only change once the trajectory visits $F$. Hence, if $e$ is oriented toward finite part, we deduce that $\mathbb{P}_e(\tau=|e|)=1$ matching the upper bound in the statement.

On the other hand, if $e$ is oriented toward the cusp, in order to avoid visiting $F$ in the first $i-1$ steps, the walk must take at least $\lfloor \frac{i- \vert e\vert}{2} \rfloor $ steps toward the cusp, all with probability $q^{-1}$. This gives the bound in the lemma.
\end{proof}

\subsubsection{Convergence of the Markov chain with varying initial distribution}
As before, let $P$ be the Markov operator corresponding to $M_n$. Let $\Omega_{0},\Omega_{1} \subseteq EQ$ be its cyclic classes and for $j=0,1$, denote by $\mu_{\Omega_j}$ the unique $P^2$ stationary probability measure on $\Omega_j$.

The next lemma describes the convergence of the Markov chain with moving initial distributions. The condition on the initial distributions will be clear later on, as this convergence will play a crucial role in the proof of Theorem \ref{thm.dm.geofin}.

\begin{lemma}\label{lemma:convergence}
Let $\Omega$ be a cyclic class of $P$ and $e(t) \subset \Omega$ be a sequence of edges in the same cyclic class, such that $t-|e(t)| \to \infty$. Let $n(t)$ be such that $|t-2n(t)|$ is constant so that $\delta_{e(t)}P^{2n(t)}$ is supported in $\Omega$. Then,
\[  \| \delta_{e(t)} P^{2n(t)} - \mu_\Omega \| \longrightarrow 0, \]
where $\|\cdot \|$ denotes the total variation norm (see e.g.\ \cite[\S D.1.2]{meyn-tweedie}).
\end{lemma}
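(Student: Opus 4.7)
The plan is to apply the strong Markov property at the first hitting time $\tau$ of the finite part $F$ and thereby reduce the question to a finite-state mixing problem. Writing $m(t) := 2n(t)$, the decomposition reads
\[
\delta_{e(t)} P^{m(t)} \ =\ \sum_{i=0}^{m(t)} \sum_{f \in F} \mathbb{P}_{e(t)}(\tau = i,\ M_\tau = f)\, \delta_f P^{m(t)-i}\ +\ \nu_t,
\]
where $\nu_t$ is a sub-probability measure of total mass $\mathbb{P}_{e(t)}(\tau > m(t))$ accounting for trajectories that have not yet hit $F$ within $m(t)$ steps. One then compares this with $\mu_\Omega$ written as the same convex combination of itself.

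The first input is a uniform convergence statement on $F$. Because $F$ is finite and, by Lemma \ref{lem:recur-geomfin}, the chain $P$ is positive recurrent of period two, the restrictions $P^2|_{\Omega_0}$ and $P^2|_{\Omega_1}$ are positive recurrent and aperiodic on the respective cyclic classes. Hence for every $\varepsilon > 0$ there exists $N_0 = N_0(\varepsilon)$ such that for all $f \in F$ and all $k \geq N_0$ whose parity is such that $\delta_f P^k$ is supported in $\Omega$, one has $\|\delta_f P^k - \mu_\Omega\| < \varepsilon/2$. In our setting this parity condition is automatic: since $e(t) \in \Omega$ and $m(t)$ is even, after $\tau = i$ steps the chain sits in the cyclic class of index $i \pmod 2$ (shifted from that of $\Omega$), and the remaining $m(t) - i$ steps bring it back to $\Omega$.

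The second input is the tail estimate of Lemma \ref{lem:first_return}: summing the geometric bound gives
\[
\mathbb{P}_{e(t)}(\tau > s) \ \leq\ C\, q^{(|e(t)| - s)/2}\qquad \text{for } s \geq |e(t)|.
\]
Applied with $s = m(t) - N_0$, this tends to $0$ as $t \to \infty$, since the hypotheses $t - |e(t)| \to \infty$ and boundedness of $|t - 2n(t)|$ together force $m(t) - |e(t)| \to \infty$.

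Combining the two: split the outer sum at $i = m(t) - N_0$. The head $i \leq m(t) - N_0$ contributes at most $\varepsilon/2$ to $\|\delta_{e(t)} P^{m(t)} - \mu_\Omega\|$ by convexity of total variation together with the uniform mixing bound on $F$; the tail $i > m(t) - N_0$ together with $\nu_t$ contributes at most $2\,\mathbb{P}_{e(t)}(\tau > m(t) - N_0)$, which is $< \varepsilon/2$ for $t$ large. I expect the only genuine bookkeeping obstacle to be the parity tracking across the cyclic classes; the real substance lies in the geometric recurrence of Lemma \ref{lem:first_return} and the finiteness of $F$, both of which do the heavy lifting.
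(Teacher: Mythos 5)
Your proof is correct and follows essentially the same strategy as the paper's: condition on the hitting time $\tau$ of the finite part, apply the strong Markov property to reduce to uniform mixing from a finite set near $F$, and control the tail via the geometric estimate of Lemma~\ref{lem:first_return}. The paper conditions on $\tau'$ (the hitting time rounded up to even parity) and works with $B(F,2)$ in place of your direct use of $\tau$ and $F$; your two-piece split is slightly cleaner bookkeeping, though note that your inner sum should range over the (still finite) set of edges $f$ with $\partial_1(f)\in VF$ rather than over $f\in EF$, since $\tau$ is defined via $\partial_1(M_n)\in F$ --- a harmless correction.
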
 

In the proof, we control the distributions with non-constant starting points $e(t)$ by studying the behaviour of the Markov chain conditioned on the hitting time of the finite part. This, together with the precise control on the hitting time as provided by Lemma \ref{lem:first_return}, allows us to prove the required convergence.

\begin{proof}
By conditioning the Markov chain on the hitting time $\tau'$ (as defined in \S  \ref{subsub:returntime}), we have
\begin{equation}\label{eq:condition_hitting_times}
    \|\delta_{e(t)}P^{2n(t)}-\mu_\Omega \|\leq 
    \sum_{\overset{i=0}{2|i}}^\infty  \mathbb{P}_{e(t)}(\tau'=i) \| \mathbb{P}_{e(t)}(\delta_{e(t)}P^{2n(t)} \in \cdot | \tau'=i)-\mu_\Omega\|.
\end{equation}
Here, for every $i \in 2\mathbb{N}$ with $\mathbb{P}_{e(t)}(\tau'=i)>0$, $\mathbb{P}_{e(t)}(\delta_{e(t)}P^{2n(t)} \in \cdot | \tau'=i)$ denotes the probability measure on $EQ$ given by 
$$e \mapsto \frac{\mathbb{P}_{e(t)}(M_{2n(t)}=e \, \, \text{and} \, \,  \tau'=i)}{\mathbb{P}_{e(t)}(\tau'=i)}.$$

It follows from strong Markov property that for $2n(t)\geq i$, we have 
\begin{equation}\label{eq:strongmarkov}
\| \mathbb{P}_{e(t)}(\delta_{e(t)}P^{2n(t)} \in \cdot | \tau'=i)- \mu_\Omega \| \leq \max_{e\in B(F,2)} \| \delta_e P^{2n(t)-i} - \mu_\Omega  \|,  
\end{equation}
where $B(F,2)$ is the set of all edges at distance $\leq 2 $ from $F$.

For $i \in 2\mathbb{N}$, denote
\[  A_i := \max_{e\in B(F,2)} \| \delta_e P^{i} - \mu_\Omega  \|  \qquad \text{and} \qquad  B_i^{(t)}:= \mathbb{P}_{e(t)}(\tau'=i)\]
For odd $i \in \mathbb{N}$, we set $A_i=B_i=0$. 
We have the following
\begin{enumerate}
    \item $\| m_1-m_2\|\leq 2$ for any probability measures $m_1,m_2$ on $EQ$,
    \item For each $t$, $\sum_i B_i^{(t)}=1$. 
    \item $B_i^{(t)} = 0$ for $i< |e(t)|-2$.
    \item $B_i^{(t)} \leq Cq^{\frac{|e(t)|-i}{2}}$ for $i\geq |e(t)|$.
    \item $A_i\to 0$ as $i\to \infty$.
\end{enumerate}
Indeed, (1) , (2) are trivial, and (3), (4) are proved in Lemma \ref{lem:first_return}. (5) holds since $P^2$ is positively recurrent (Proposition \ref{prop.pos.rec}), irreducible (Lemma \ref{lem:MCirred}) and aperiodic (lemma \ref{lemma.period}).

With this notation, splitting the right-hand-side of \eqref{eq:condition_hitting_times} into three sums, we get that left-hand-side of \eqref{eq:condition_hitting_times} is bounded above by
\begin{equation}\label{eq:split}
\sum_{i< |e(t)|-2}  A_{2n(t)-i} B_i^{(t)}  + \sum_{|e(t)|-2 \leq i\leq 2n(t)}  A_{2n(t)-i} B_i^{(t)}+ \sum_{i>2n(t)}  2 B_i^{(t)},
\end{equation}
where we used \eqref{eq:strongmarkov} for the first two sums, and (1) for the third. We need to show that the above tends to $0$ as $t\to \infty$. 

By (3), the first sum is identically $0$ and as $t \to \infty$, the third sum tends to $0$ by (4) and the fact that $2n(t)-|e(t)|$ tends to $\infty$.

We focus on the middle sum of \eqref{eq:split}, which after denoting $N_t=2n(t)-|e(t)|$, we rewrite  as follows
\[  \sum_{i=|e(t)|-2}^{2n(t)}  A_{2n(t)-i} B_i^{(t)} =  \sum_{i=0}^{2n(t)-|e(t)|+2} A_i B_{2n(t)-i}^{(t)} =    \sum_{i=0}^{N_t+2} A_i B_{N_t+|e(t)|-i}^{(t)} \]

\begin{equation}\label{eq:split2}
\leq \sum_{i=0}^{\lceil N_t/2 \rceil} A_i B_{N_t+|e(t)|-i}^{(t)}
+
\sum_{i= \lceil N_t/2 \rceil }^{N_t+2} A_i B_{N_t+|e(t)|-i}^{(t)}    
\end{equation}

By (2), the second sum in \eqref{eq:split2} is bounded from above by $ \displaystyle \sup_{i \geq N_t/2} \{ A_i\}$. As $t\to \infty$, $N_t$ goes to $\infty$ so that using (5), $\displaystyle \sup_{i \geq N_t/2} \{ A_i\}$ converges to $0$ showing that the second sum in \eqref{eq:split2} converges to $0$.

On the other hand, by $(1)$, the first sum in \eqref{eq:split2} is bounded above by

\[ 2 \sum_{i=0}^{\lceil N_t/2 \rceil}  B_{N_t+|e(t)|-i}^{(t)} \leq 
2 \sum_{i=\lceil N_t/2 \rceil +|e(t)|}^{N_t+|e(t)|} B_i^{(t)} .
\]
which converges to $0$ as $t\to \infty$ by (4). This concludes the proof.
\end{proof}

\subsection{Proof of Theorem \ref{thm.dm.geofin}}

We now link the Markov chain to the study of orbital measures of horospherical orbits and use the properties of $M_n$ to prove Theorem \ref{thm.dm.geofin}. Before starting the proof, we remark that it suffices to prove the result only for the F\o lner sequence $F_t$. Indeed, let $O$ be a $M$-invariant compact subset with non-empty interior in $G^0_\eta$, $a \in G$ be a hyperbolic element with attractive fixed point $\eta$ and of (minimal) translation distance 2 and $O_{t}=a^t O a^{-t}$ be the associated good F\o lner sequence. It follows by compactness of $F_0$ and $O$ that for some $n_0 \in \mathbb{N}$ and every $t \in \mathbb{N}$, we have
\begin{equation}\label{eq.folners}
O_{t-n_0}\subseteq F_{2t} \subseteq O_{t+n_0}.    
\end{equation}
As a consequence, there exists $c \in (0,1)$ such that for every $t \in \mathbb{N}$, the sequence $F_{2t}=a^t F_0 a^{-t}$ satisfies \begin{equation}\label{eq.folners1}
0<c \leq \frac{m_{G^0_\eta}(F_{2t})}{m_{G^0_\eta}(O_{t+n_0)}}
 \leq \frac{m_{G^0_\eta}(F_{2t})}{m_{G^0_\eta}(O_{t-n_0)}} \leq \frac{1}{c} < \infty  
\end{equation}
One easily sees from these inequalities that the orbital measures $\nu_{x,t}$ associated to $F_{t}$ have non-escape of mass if and only if those associated to $O_t$ have it.

\subsubsection{Reduction to measures on the tree}\label{subsub:reduction}

For the rest of the section we fix $x = g\Gamma \in X$ with non-compact $G^0_\eta$-orbit. 

Recall that for $t \in \mathbb{N}$,  $\nu_{x,t}$ denotes the probability measure on the orbit $F_t x$ obtained by pushforward of the Haar probability measure on $F_t$ under the orbit map $u\mapsto ux$ for $u\in F_t$.  

Denote by $\sigma_t$ the uniform probability measure on the finite set $g^{-1}F_t \tilde{o} \subset VT$. 
The following observation is the first step in reducing the proof of recurrence of horospherical orbits to  studying recurrence properties of the Markov chain $M_n$ introduced earlier.

\begin{lemma}\label{lem:nu_and_sigma*}
For every $t \in \mathbb{N}^\ast$, we have
\begin{equation}\label{eq:nu_and_sigma*}
    \proj_* \nu_{x,t} = \pi_* \sigma_t .
\end{equation}  
\end{lemma}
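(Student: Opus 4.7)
The plan is to unfold both sides of the claimed identity as measures on the discrete set $VQ$ and match them on each singleton. Fix $v\in VQ$; I would first compute
\[
\proj_* \nu_{x,t}(v) \;=\; m_{F_t}\bigl(\{u\in F_t : \proj(ux)=v\}\bigr) \;=\; m_{F_t}\bigl(\{u\in F_t : g^{-1}u^{-1}\tilde{o}\in\pi^{-1}(v)\}\bigr),
\]
using the definitions of $\nu_{x,t}$ and of $\proj$ (namely $\proj(h\Gamma)=\pi(h^{-1}\tilde{o})$). The appearance of $u^{-1}$ here is the only slightly awkward feature; I would dispose of it by invoking the fact that $F_t$ is a compact group, so Haar measure is invariant under inversion, allowing me to rewrite the above as $m_{F_t}(\{u\in F_t : g^{-1}u\tilde{o}\in\pi^{-1}(v)\})$.

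Next, I would study the orbit map $\Phi:F_t\to g^{-1}F_t\tilde{o}$, $u\mapsto g^{-1}u\tilde{o}$. This map is surjective by construction, and for any fixed $w=g^{-1}u_0\tilde{o}$ in its image, the fiber $\Phi^{-1}(w)$ is precisely the coset $u_0(F_t\cap G_{\tilde{o}})$ of the open subgroup $F_t\cap G_{\tilde{o}}\leq F_t$. Hence all fibers have equal Haar measure, and there are $|g^{-1}F_t\tilde{o}|$ of them, so each fiber has measure $1/|g^{-1}F_t\tilde{o}|$. Therefore $\Phi_* m_{F_t}$ is the uniform probability measure on $g^{-1}F_t\tilde{o}$, which is by definition $\sigma_t$.

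Combining, for each $v\in VQ$,
\[
\proj_* \nu_{x,t}(v) \;=\; \Phi_* m_{F_t}\bigl(\pi^{-1}(v)\cap g^{-1}F_t\tilde{o}\bigr) \;=\; \sigma_t\bigl(\pi^{-1}(v)\bigr) \;=\; \pi_*\sigma_t(v),
\]
which yields the desired equality. There is no real obstacle here; the proof is an unpacking of definitions, and the only two substantive facts used are the inversion-invariance of Haar on the compact group $F_t$ and the standard fact that a transitive action of a compact group on a finite set pushes Haar to the uniform measure. I would mention the latter in passing rather than belabor it.
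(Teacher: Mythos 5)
Your proof is correct and follows the same route as the paper's: the paper sets up the commutative square relating the orbit map $O:u\mapsto ug\Gamma$ and the map $f:u\mapsto g^{-1}u^{-1}\tilde{o}$, then observes that $f_*m_{F_t}=\sigma_t$ is ``readily verified,'' and your argument is precisely that verification, unpacked into inversion-invariance of $m_{F_t}$ plus the standard fact that the orbit map of a compact group onto a finite orbit pushes Haar to the uniform measure (fibers being cosets of the open stabilizer $F_t\cap G_{\tilde{o}}$). The two write-ups differ only in presentation, not in substance.
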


\begin{proof}
Recall that $x\in G/\Gamma$ is fixed and  $g\in G$ is such that $x=g\Gamma$. Consider the map $f: G^0_\eta \to T$ given by $f(u)=g^{-1}u^{-1}\tilde{o}$. Denote by $O: u \mapsto ug\Gamma$ the orbit map. Then the following diagram clearly commutes:
\begin{figure}[H]
    \centering
\begin{tikzpicture}
  \matrix (m) [matrix of math nodes,row sep=3em,column sep=4em,minimum width=2em]
  {
     F_t  & g^{-1}F_t \tilde{o} \subset VT \\
     F_t g\Gamma \subset G/\Gamma
     & \Gamma g^{-1}F_t\tilde{o} \subset VQ \\};
  \path[-stealth]
  
    (m-1-1) 
             edge node [above] {$f$} (m-1-2)
             edge node [left] {$O$} (m-2-1)

     (m-1-2) edge node [right] {$\pi$} (m-2-2)
     (m-2-1) edge node [below] {$\proj$}    (m-2-2) ;
\end{tikzpicture}
\end{figure}
By definition, $O_* m_{F_t}=\nu_{x,t}$ and hence it is enough to see that $f_* m_{F_t}=\sigma_t$. This is readily verified and we are done. 
\end{proof}

\subsubsection{Further reduction to shadows and the Markov chain}

Above, we related the orbital measures $\nu_{x,t}$ to $\sigma_t$  - distributions on $VT$. The next lemmas will link $\sigma_t$ to the distributions of the Markov chain. 

For $v \in VT$ and $n \in \mathbb{N}$, denote by $S(v,n)$ the set of vertices of $T$ at distance $n \geq 0$ from $v$. For $w$ a neighbor of $v$, let $S_w(v,n)$ be the subset of $S(v,n)$ consisting of vertices $z \in VT$ such that $d(z,w)<d(z,v)$. Thinking of $v$ as a light source at the center the sphere, we call $S_w(v,n)$ the \textit{shadow} of $w$ (see Fig.~\ref{fig:shadow} for illustration). Denote by $\lambda_{(v,w),n}$ the uniform probability measure on the shadow $S_w(v,n)$.

\begin{figure}[H]
    \centering
    
\tikzset{every picture/.style={line width=0.75pt}} 

\begin{tikzpicture}[x=0.75pt,y=0.75pt,yscale=-1,xscale=1]

\draw  [dash pattern={on 0.84pt off 2.51pt}]  (178.5,200) -- (230.5,200) ;

\draw  [dash pattern={on 0.84pt off 2.51pt}]  (230.5,200) -- (261.5,161) ;

\draw  [dash pattern={on 0.84pt off 2.51pt}]  (230.5,200) -- (260.5,239) ;

\draw  [dash pattern={on 0.84pt off 2.51pt}]  (261.49,160.47) -- (309.74,141.09) ;
\draw [shift={(309.74,141.09)}, rotate = 338.11] [color={rgb, 255:red, 0; green, 0; blue, 0 }  ][fill={rgb, 255:red, 0; green, 0; blue, 0 }  ][line width=0.75]      (0, 0) circle [x radius= 3.35, y radius= 3.35]   ;

\draw  [dash pattern={on 0.84pt off 2.51pt}]  (307.5,177) -- (261.49,160.47) ;

\draw [shift={(307.5,177)}, rotate = 199.76] [color={rgb, 255:red, 0; green, 0; blue, 0 }  ][fill={rgb, 255:red, 0; green, 0; blue, 0 }  ][line width=0.75]      (0, 0) circle [x radius= 3.35, y radius= 3.35]   ;
\draw  [dash pattern={on 0.84pt off 2.51pt}]  (260.74,239.56) -- (309.93,256.4) ;
\draw [shift={(309.93,256.4)}, rotate = 18.89] [color={rgb, 255:red, 0; green, 0; blue, 0 }  ][fill={rgb, 255:red, 0; green, 0; blue, 0 }  ][line width=0.75]      (0, 0) circle [x radius= 3.35, y radius= 3.35]   ;

\draw  [dash pattern={on 0.84pt off 2.51pt}]  (260.5,239) -- (308.5,215) ;
\draw [shift={(308.5,215)}, rotate = 333.43] [color={rgb, 255:red, 0; green, 0; blue, 0 }  ][fill={rgb, 255:red, 0; green, 0; blue, 0 }  ][line width=0.75]      (0, 0) circle [x radius= 3.35, y radius= 3.35]   ;

\draw  [dash pattern={on 0.84pt off 2.51pt}]  (178.5,200) -- (147.75,239.2) ;

\draw  [dash pattern={on 0.84pt off 2.51pt}]  (177.93,199.4) -- (147.67,160.59) ;

\draw  [dash pattern={on 0.84pt off 2.51pt}]  (147.19,239.12) -- (103,271) ;

\draw  [dash pattern={on 0.84pt off 2.51pt}]  (103,233) -- (147.19,239.12) ;

\draw  [dash pattern={on 0.84pt off 2.51pt}]  (147.44,160.03) -- (103,140) ;

\draw  [dash pattern={on 0.84pt off 2.51pt}]  (147.67,160.59) -- (103,183) ;

\draw  [dash pattern={on 0.84pt off 2.51pt}]  (103,140.03) -- (55,124.66) ;
\draw [shift={(52,124)}, rotate = 196.26] [color={rgb, 255:red, 0; green, 0; blue, 0 }  ][line width=0.75]      (0, 0) circle [x radius= 3.35, y radius= 3.35]   ;

\draw  [dash pattern={on 0.84pt off 2.51pt}]  (103,140.59) -- (55,151.52) ;
\draw [shift={(52,152)}, rotate = 168.11] [color={rgb, 255:red, 0; green, 0; blue, 0 }  ][line width=0.75]      (0, 0) circle [x radius= 3.35, y radius= 3.35]   ;

\draw  [dash pattern={on 0.84pt off 2.51pt}]  (103,183) -- (55,167.23) ;
\draw [shift={(52,166.48)}, rotate = 198.52] [color={rgb, 255:red, 0; green, 0; blue, 0 }  ][line width=0.75]      (0, 0) circle [x radius= 3.35, y radius= 3.35]   ;

\draw  [dash pattern={on 0.84pt off 2.51pt}]  (103,183) -- (55,199.2) ;
\draw [shift={(52,200)}, rotate = 160.11] [color={rgb, 255:red, 0; green, 0; blue, 0 }  ][line width=0.75]      (0, 0) circle [x radius= 3.35, y radius= 3.35]   ;

\draw  [dash pattern={on 0.84pt off 2.51pt}]  (55,220.68) -- (103,233) ;

\draw [shift={(52,220)}, rotate = 16.82] [color={rgb, 255:red, 0; green, 0; blue, 0 }  ][line width=0.75]      (0, 0) circle [x radius= 3.35, y radius= 3.35]   ;
\draw  [dash pattern={on 0.84pt off 2.51pt}]  (55,264.3) -- (103,271) ;

\draw [shift={(52,264)}, rotate = 7.39] [color={rgb, 255:red, 0; green, 0; blue, 0 }  ][line width=0.75]      (0, 0) circle [x radius= 3.35, y radius= 3.35]   ;
\draw  [dash pattern={on 0.84pt off 2.51pt}]  (103,233) -- (55,248.2) ;
\draw [shift={(52,249)}, rotate = 160.02] [color={rgb, 255:red, 0; green, 0; blue, 0 }  ][line width=0.75]      (0, 0) circle [x radius= 3.35, y radius= 3.35]   ;

\draw  [dash pattern={on 0.84pt off 2.51pt}]  (103,271) -- (55,289.22) ;
\draw [shift={(52,290)}, rotate = 160.62] [color={rgb, 255:red, 0; green, 0; blue, 0 }  ][line width=0.75]      (0, 0) circle [x radius= 3.35, y radius= 3.35]   ;

\draw (183,188) node  [align=left] {$\displaystyle v$};
\draw (226,188) node  [align=left] {$\displaystyle w$};

\end{tikzpicture}

\begin{center}

\end{center}
\caption{Shadow of $w$: all nodes are vertices on the sphere $S(v,3)$, the shadow $S_w(v,3)$ consists of solid nodes.}
    \label{fig:shadow}
\end{figure}
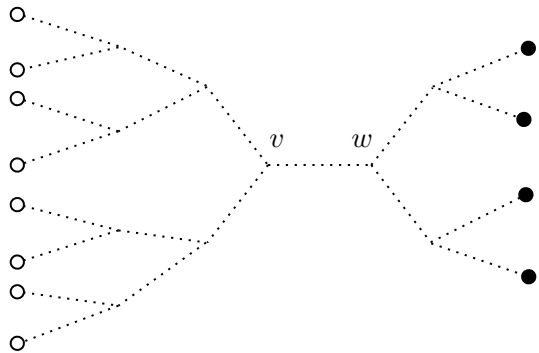


%
%
%


\begin{lemma}\label{lemma.side.sphere}
Let $G$ be a non-compact closed subgroup of $\Aut(T)$ that acts transitively on $\partial T$. For any $t \in \mathbb{N}^\ast$, we have

\begin{equation*}
     \sigma_t= \frac{1}{\deg(g^{-1}y_t)-1} \sum_{i_t=1}^{\deg(g^{-1}y_t)-1}  \lambda_{(g^{-1}y_t,\tilde{v}_{i_t}),t}=\lambda_{(g^{-1}y_{t+1},g^{-1}y_t),t+1} ,
\end{equation*}
where  $\{\tilde{v}_{i_t}\}$ is the collection of vertices in $T$ neighboring $g^{-1}y_t$ except $g^{-1}y_{t+1}$.
\end{lemma}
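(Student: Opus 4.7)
The lemma consists of two equalities; I plan to establish the second, $\sigma_t = \lambda_{(g^{-1}y_{t+1}, g^{-1}y_t), t+1}$, first, and then derive the first by a combinatorial decomposition of the shadow. By definition, $\sigma_t$ is the uniform probability measure on the finite set $g^{-1}F_t\tilde{o}$, and $\lambda_{(g^{-1}y_{t+1}, g^{-1}y_t), t+1}$ is the uniform probability measure on the shadow $S_{g^{-1}y_t}(g^{-1}y_{t+1}, t+1)$. Hence the second equality amounts to the set identity $g^{-1}F_t\tilde{o} = S_{g^{-1}y_t}(g^{-1}y_{t+1}, t+1)$, which, after applying the isometry $g$, reduces to $F_t y_0 = S_{y_t}(y_{t+1}, t+1)$. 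The inclusion $F_ty_0 \subseteq S_{y_t}(y_{t+1}, t+1)$ is immediate: any $u \in F_t \subseteq G_\eta^0$ fixes $y_t$ and, by the definition of $G_\eta^0$ together with the tree property, also the entire tail ray $(y_n)_{n \geq t}$; in particular $d(uy_0, y_t) = t$ and $d(uy_0, y_{t+1}) = t+1$, placing $uy_0$ in the shadow.

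For the reverse inclusion, fix $z \in S_{y_t}(y_{t+1}, t+1)$ and extend $y_0$ and $z$ to ends $\xi_0, \xi_z \in \partial T \setminus \{\eta\}$ so that the bi-infinite geodesics $\gamma_{\xi_0,\eta}$ and $\gamma_{\xi_z,\eta}$ both contain the tail $(y_n)_{n\geq t}$; such extensions exist because $d_1, d_2 \geq 3$. The key step is to produce $u \in G_\eta^0$ with $u\xi_0 = \xi_z$: by the 2-transitivity of $G$ on $\partial T$ (invoked in the preliminaries via \cite[Lemma 3.1.1]{burger-mozes.local}), there is $u_0 \in G_\eta$ with $u_0\xi_0 = \xi_z$, and the Busemann cocycle of $u_0$ relative to $\eta$ can be annihilated by left-multiplying by a suitable integer power of a hyperbolic element $a \in G$ with axis $\gamma_{\xi_z,\eta}$ attracting $\eta$ (such an $a$ is obtained by 2-transitively conjugating a hyperbolic element of $G$). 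The resulting element $u = a^k u_0$ lies in $G_\eta^0$ (zero cocycle) and still satisfies $u\xi_0 = \xi_z$ since $a$ fixes $\xi_z$. Being a tree isometry that fixes $y_n$ for all sufficiently large $n$, $u$ must map the geodesic from $y_0$ to $y_n$ (contained in $\gamma_{\xi_0,\eta}$) to a length-$n$ geodesic in $\gamma_{\xi_z,\eta}$ ending at $y_n$; comparing Busemann levels, this forces $uy_0 = z$ and $uy_t = y_t$. Hence $u \in F_t$ and $uy_0 = z$, completing the equality.

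For the first equality, I decompose the shadow: every vertex in $S_{g^{-1}y_t}(g^{-1}y_{t+1}, t+1)$ lies at distance $t$ from $g^{-1}y_t$ along a unique non-backtracking path whose first edge points to some neighbor $\tilde{v}_{i_t}$ of $g^{-1}y_t$ different from $g^{-1}y_{t+1}$. This yields the disjoint decomposition $S_{g^{-1}y_t}(g^{-1}y_{t+1}, t+1) = \bigsqcup_{i_t} S_{\tilde{v}_{i_t}}(g^{-1}y_t, t)$, and by biregularity of $T$ each piece has the same cardinality; therefore the uniform probability measure on the union equals the average of the uniform probability measures on the pieces, which is the first equality. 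The main obstacle in the plan is the reverse inclusion above, specifically the transition from an element of $G_\eta$ (provided by 2-transitivity) to one in $G_\eta^0$ that additionally fixes $y_t$. The correction via hyperbolic translations along $\gamma_{\xi_z,\eta}$ is the only delicate point and relies on the fact that, thanks to 2-transitivity, the image of the Busemann cocycle on $G_\eta$ is realized by translations along any prescribed axis through $\eta$, a structural feature of horospherical subgroups analyzed in \cite{CFS}.
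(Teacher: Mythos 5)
Your proof is correct and follows the same overall structure as the paper's: reduce the first equality to the set identity $g^{-1}F_t\tilde{o} = \bigsqcup_{w \neq g^{-1}y_{t+1}} S_w(g^{-1}y_t,t)$ (you phrase this through the second equality, $F_ty_0 = S_{y_t}(y_{t+1},t+1)$, which is the same set), reduce to $g=\id$, observe the forward inclusion is immediate, and use 2-transitivity and hyperbolic elements for the reverse inclusion; the first equality then follows from the fact that the shadows $S_{\tilde v_{i_t}}(g^{-1}y_t,t)$ partition the big shadow and have equal cardinality by biregularity.

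The one place you genuinely diverge is in the reverse inclusion. The paper takes a hyperbolic element $a$ with axis $(\xi_1,\eta)$ and a conjugate $b$ with axis $(\xi_2,\eta)$ and the \emph{same} translation length, and passes to the limit $h_n := b^{-n}a^n\xi_1 \to \xi_2$ with $h_n \in F_t$ automatically (the conjugacy makes the Busemann cocycles cancel without any bookkeeping). You instead construct a single element: take $u_0 \in G_\eta$ sending $\xi_0$ to $\xi_z$ by 2-transitivity and then kill its Busemann cocycle by multiplying with a power $a^k$ of a hyperbolic element along $(\xi_z,\eta)$. This avoids the limit and is slightly more direct, but it imports an extra structural fact that you only gesture at: that the image of the Busemann homomorphism $\beta\colon G_\eta \to \mathbb{Z}$ is generated by the translation length of some hyperbolic element, and that (via conjugation by $G_\eta$, using 2-transitivity again) such an element can be chosen with axis $(\xi_z,\eta)$. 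This is true, but you should make it explicit rather than offloading it to a vague citation of \cite{CFS}. The paper's $b^{-n}a^n$ trick sidesteps this entirely by making $\tau(a)=\tau(b)$ by construction; that is the one small efficiency it has over your version. Both routes are fine, and your Busemann-level argument for $uy_t = y_t$ and $uy_0 = z$ is correct.
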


\begin{proof}
The last equality directly results from the definition of the probability measure $\lambda_{(v,w),t}$, therefore we focus on the first equality. Since all the shadows involved have the same cardinality, using the definitions of $\sigma_t$ and $\lambda_{(v,w),t}$, the equality will follow if we show 
\begin{equation*}
g^{-1}F_t \tilde{o} =     \bigsqcup_{i_t=1}^{\deg(g^{-1}y_t)-1} S_{\tilde{v}_{i_t}}(g^{-1}y_t,t).
\end{equation*}
In other words, the set $g^{-1}F_t\tilde{o}$ is the set of vertices on the sphere of radius $t$ around $g^{-1}y_t$ except the shadow of $g^{-1}y_{t+1}$.

Since $g$ acts by isometry, it is enough to show 
this for $g=\id$.
We clearly have 
$$F_t \tilde{o} \quad  \subset \bigsqcup_{\substack{(y_t,w)\in ET \\ w\neq y_{t+1}}} S_{w}(y_t,t).$$

To show the other inclusion, let $\xi_1,\xi_2 \in \partial T$ be such that $(\xi_i, \eta) \cap [y_0, \eta) \supset [y_t, \eta)$ for $i=1,2$. It clearly suffices to show that there exist a sequence $h_n \in F_t$ with $h_n \xi_1 \to \xi_2$ as $n \to \infty$. To see this, note that since $G$ is non-compact, closed and transitive on $\partial T$, by \cite[Lemma 3.1.1]{burger-mozes.local} it acts doubly transitively on $\partial T$.
Furthermore, since it is non-compact, it contains a hyperbolic element $a$ that -thanks to double transitivity- we can suppose to have attracting point $\eta$ and repelling point $\xi_1$ on $\partial T$. Similarly up to conjugating $a$, let $b$ be a hyperbolic element with attracting fixed point $\eta$ and repelling fixed point $\xi_2$. The sequence $h_n=b^{-n}a^n$ does the job and this concludes the proof.
\end{proof}

Let
\begin{equation}\label{eq:def_D}
    D:= \frac{1}{\deg(\tilde{o})} \sum_{(\tilde{o},\tilde{w})\in ET} \delta_{(o,\pi(\tilde{w}))}.
\end{equation}  
where deg$(.)$ denotes the valency of the vertex $\tilde{o}$. Denote by $\rho_n$ the uniform measure on the sphere $S(\tilde{o},n)$. In the following lemma, we realize the probability measures $\pi_\ast \lambda_{(v,w),n}$ and $\pi_* \rho_n$ as the $n^{th}$-step distribution of our Markov chain with appropriate initial distributions. The fact that such a relation exists is not surprising as the Markov chain $M_n$ is obtained as a quotient of the simple random walk on the edges of the tree $T$.

\begin{lemma}\label{lemma.mc.spheres}
Let $\tilde{v} \in VT$ and $\tilde{w} \in VT$ be a neighbour of $\tilde{v}$. Denote $\pi(\tilde{v})=v$, $\pi(\tilde{w})=w$. Then for any $n\geq 0$
\begin{equation}\label{eq:mc.shadows}
    \pi_\ast \lambda_{(\tilde{v},\tilde{w}),n+1} = \partial_1{}_\ast (\delta_{(v,w)}P^n).
\end{equation}  
and 
\begin{equation}\label{eq:sphere-sum_of_pacman}
     \pi_* \rho_{n+1} = \partial_{1*} (D P^n).
\end{equation} \end{lemma}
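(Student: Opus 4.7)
The strategy is to realize the Markov chain $P$ on $EQ$ as the $\pi$-pushforward of a natural non-backtracking random walk on the directed edges $ET$ of the tree, and then compute the two distributions at the level of the tree using the biregular structure.

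First I would introduce the Markov kernel $\widetilde{P}$ on $ET$ defined by
\[
\widetilde{P}(\tilde{e}_1,\tilde{e}_2)= \frac{1}{\deg(\partial_1(\tilde{e}_1))-1}
\quad\text{if }\partial_0(\tilde{e}_2)=\partial_1(\tilde{e}_1)\ \text{and}\ \tilde{e}_2\neq \overline{\tilde{e}_1},
\]
and zero otherwise. The key compatibility to verify is that $\pi_\ast(\delta_{\tilde{e}_1}\widetilde{P})=\delta_{\pi(\tilde{e}_1)}P$ for every $\tilde{e}_1\in ET$. This reduces to the counting fact, standard in the theory of edge-indexed graphs (see \S\ref{subsec.edge.indexed}), that for a lift $\tilde{v}$ of $v\in VQ$ and $e\in EQ$ with $\partial_0(e)=v$, exactly $\ind(e)$ edges emanating from $\tilde{v}$ in $T$ project to $e$. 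Summing $\widetilde{P}(\tilde{e}_1,\cdot)$ over the preimages of $e_2\in EQ$ in the fibre $\pi^{-1}(e_2)$ then yields $\ind(e_2)/(\deg-1)$ when $e_2\neq \overline{e_1}$ and $(\ind(\overline{e_1})-1)/(\deg-1)$ when $e_2=\overline{e_1}$ (the lift $\overline{\tilde{e}_1}$ being excluded), matching the three cases in the definition of $P$. Iterating gives $\pi_\ast(\delta_{\tilde{e}_1}\widetilde{P}^n)=\delta_{\pi(\tilde{e}_1)}P^n$.

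Next I would establish, for the tree side, the identity $\partial_{1\ast}(\delta_{(\tilde{v},\tilde{w})}\widetilde{P}^n)=\lambda_{(\tilde{v},\tilde{w}),n+1}$. An $n$-step trajectory of $\widetilde{P}$ starting at $(\tilde{v},\tilde{w})$ is in bijection with a non-backtracking path $\tilde{v}=\tilde{v}_0,\tilde{w}=\tilde{v}_1,\tilde{v}_2,\ldots,\tilde{v}_{n+1}$ in $T$, which by the tree property is in bijection with its terminal vertex $\tilde{v}_{n+1}\in S_{\tilde{w}}(\tilde{v},n+1)$. Its $\widetilde{P}$-probability is $\prod_{i=1}^{n}(\deg(\tilde{v}_i)-1)^{-1}$; by biregularity of $T$, the degrees $\deg(\tilde{v}_i)$ alternate between $d_1$ and $d_2$ depending only on $i$ and the type of $\tilde{w}$, so this weight is the same for every such path. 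The resulting distribution of $\tilde{v}_{n+1}$ is therefore the uniform measure on $S_{\tilde{w}}(\tilde{v},n+1)$, i.e.\ $\lambda_{(\tilde{v},\tilde{w}),n+1}$.

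Equation \eqref{eq:mc.shadows} then follows by applying $\pi_\ast$ and using the obvious commutation $\pi\circ\partial_1=\partial_1\circ\pi$:
\[
\pi_\ast\lambda_{(\tilde{v},\tilde{w}),n+1}
=\pi_\ast\partial_{1\ast}(\delta_{(\tilde{v},\tilde{w})}\widetilde{P}^n)
=\partial_{1\ast}\pi_\ast(\delta_{(\tilde{v},\tilde{w})}\widetilde{P}^n)
=\partial_{1\ast}(\delta_{(v,w)}P^n).
\]
For \eqref{eq:sphere-sum_of_pacman}, I would set $\widetilde{D}:=\frac{1}{\deg(\tilde{o})}\sum_{(\tilde{o},\tilde{w})\in ET}\delta_{(\tilde{o},\tilde{w})}$, which by definition satisfies $\pi_\ast\widetilde{D}=D$. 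By linearity and the previous step, $\partial_{1\ast}(\widetilde{D}\widetilde{P}^n)=\frac{1}{\deg(\tilde{o})}\sum_{\tilde{w}}\lambda_{(\tilde{o},\tilde{w}),n+1}$. Biregularity ensures that $S(\tilde{o},n+1)$ is the disjoint union of the $\deg(\tilde{o})$ shadows $S_{\tilde{w}}(\tilde{o},n+1)$, all of equal cardinality, so this average equals $\rho_{n+1}$. Applying $\pi_\ast$ and the compatibility above yields \eqref{eq:sphere-sum_of_pacman}.

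The only delicate step is the first one, verifying that $\pi_\ast$ intertwines $\widetilde{P}$ with $P$; it is elementary but requires handling the ``backtracking'' case carefully. The remaining arguments are straightforward consequences of biregularity and the tree structure.
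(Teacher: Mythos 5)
Your proposal is correct and follows essentially the same route as the paper's proof: the paper also introduces the non-backtracking edge walk on $ET$ (denoted $L_n$ there, your $\widetilde{P}$), asserts the identities $\lambda_{(\tilde v,\tilde w),n+1}=\partial_{1*}(\delta_{(\tilde v,\tilde w)}L_n)$ and $\pi_*\delta_{(\tilde v,\tilde w)}L_n=\delta_{(v,w)}M_n$, and concludes by commutation of $\pi$ with $\partial_1$ and by averaging over the $\deg(\tilde o)$ initial edges. The only difference is that you spell out the counting that the paper treats as ``clear'', in particular the backtracking case $e_2=\overline{e_1}$ with the $\ind(\overline{e_1})-1$ count.
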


\begin{proof}
Define a Markov chain $L_n$ with state space $ET$ and transition probabilities
\[  l(e_1,e_2)=
\begin{cases}
\frac{1}{\deg(\partial_1(e_1)-1)} & \text{if } \partial_1(e_1)=\partial_0(e_2),\\
0 & \text{otherwise.}
\end{cases}
\]
This chain describes the unbiased non-backtracking random walk on the (directed) edges of the tree.  For all $n\geq 0$, one clearly has
\[   \lambda_{(\tilde{v},\tilde{w}),n+1}  = \partial_{1*} (\delta_{(\tilde{v},\tilde{w})} L_n). \]

On the other hand, observe that $\pi_* \delta_{(\tilde{v},\tilde{w})} L_n = \delta_{(v,w)}M_n$ but since $\pi$ commutes with $\partial_1$, \eqref{eq:mc.shadows} follows.

To see the second claim, note that by construction of the Markov chain $L_n$, we have
$$\rho_{n+1}= \frac{1}{\deg(\tilde{o})}\sum_{(\tilde{o},\tilde{w})\in ET} \delta_{(\tilde{o},\tilde{w})}L_n. $$
Applying $\pi_*$ to both sides yields \eqref{eq:sphere-sum_of_pacman}.
\end{proof}

\begin{remark}\label{rk.general}
We remark here that the statements of Lemmas \ref{lem:nu_and_sigma*}, \ref{lemma.side.sphere} and \ref{lemma.mc.spheres} hold more generally for any lattice $\Gamma$ of $\Aut(T)$. Indeed, the proofs do not make use of the particular structure of a geometrically finite lattice. 
\end{remark}

\begin{proposition}\label{more.precise}
Let  $x=g\Gamma$ with non-compact $G_\eta^0$-orbit. Then the set of weak-$*$ limit points of $  \pi_* \sigma_t$ is  \{$\partial_{1*} \mu_{\Omega_0},\partial_{1*} \mu_{\Omega_1}\}$, where $\Omega_i$'s are two cyclic classes of $P$ and for $i=0,1$, $\mu_{\Omega_i}$ is the unique $P^2$-stationary measures on $\Omega_i$ as before.
\end{proposition}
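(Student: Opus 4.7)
\textbf{Plan for the proof of Proposition \ref{more.precise}.} The strategy is to identify $\pi_*\sigma_t$ as the $\partial_1$-pushforward of the distribution at time $t$ of the Markov chain $M_n$ started at a moving edge, and then to invoke Lemma \ref{lemma:convergence} together with a small amount of parity bookkeeping to describe the accumulation points.

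First, I would combine Lemma \ref{lemma.side.sphere}---which identifies $\sigma_t$ with the shadow measure $\lambda_{(g^{-1}y_{t+1},\,g^{-1}y_t),\,t+1}$---with Lemma \ref{lemma.mc.spheres}---which realizes such shadow measures on $VQ$ as $\partial_{1*}(\delta_{e}P^n)$ for the Markov chain $M_n$ on $EQ$---to derive
\[
\pi_*\sigma_t \;=\; \partial_{1*}\bigl(\delta_{e(t)}\,P^t\bigr),\qquad e(t):=\pi\bigl((g^{-1}y_{t+1},\,g^{-1}y_t)\bigr)\in EQ.
\]
This reduces the question about weak-$*$ limits of $\pi_*\sigma_t$ to the asymptotic behaviour of $\delta_{e(t)}P^t$ with moving starting edge. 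The hypothesis that the $G^0_\eta$-orbit of $x=g\Gamma$ is non-compact enters through Proposition \ref{prop.irrational}, which provides that $t-d(\pi(g^{-1}y_t),F)\to\infty$. Since $\partial_1(e(t))=\pi(g^{-1}y_t)$ and therefore $|e(t)|=d(\pi(g^{-1}y_t),F)$, this is exactly the hypothesis $t-|e(t)|\to\infty$ required by Lemma \ref{lemma:convergence}.

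The remaining point is to deal with the period-$2$ structure granted by Lemma \ref{lemma.period}; let $\Omega_0,\Omega_1$ be the cyclic classes of $P$. Since $G\leq \Aut(T)$ acts without edge inversion, $g$ preserves the bipartition of $VT$, so the color of $\partial_1(e(t))=\pi(g^{-1}y_t)$ alternates with $t$; after a suitable labelling one has $e(t)\in \Omega_{t\bmod 2}$. I would then split the sequence by the parity of $t$. For $t=2n$ even, Lemma \ref{lemma:convergence} applied with $n(t)=n$ directly yields $\delta_{e(2n)}P^{2n}\to \mu_{\Omega_0}$ in total variation. For $t=2n+1$ odd, Lemma \ref{lemma:convergence} applied with $2n(t)=2n$ yields $\delta_{e(2n+1)}P^{2n}\to \mu_{\Omega_1}$; then one further application of $P$, combined with the relation $\mu_{\Omega_1}P=\mu_{\Omega_0}$ recalled in the preliminaries and the non-expansiveness of $P$ in total variation, gives $\delta_{e(2n+1)}P^{2n+1}\to \mu_{\Omega_0}$. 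Pushing forward by $\partial_1$ and using that total-variation convergence implies weak-$*$ convergence, every accumulation point of $\pi_*\sigma_t$ belongs to $\{\partial_{1*}\mu_{\Omega_0},\partial_{1*}\mu_{\Omega_1}\}$, which proves the proposition.

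The only conceptual step is the translation to the Markov chain through Lemmas \ref{lemma.side.sphere} and \ref{lemma.mc.spheres}; the rest is an application of the already-established Lemma \ref{lemma:convergence} plus parity bookkeeping on cyclic classes. The main thing to be careful about is consistently tracking which cyclic class $e(t)$ lies in versus which class $\delta_{e(t)}P^t$ ends up in, but no substantial technical obstacle is anticipated because Lemma \ref{lemma:convergence} already encapsulates the hard work of positive recurrence and control of hitting times.
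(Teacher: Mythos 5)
Your proof is correct and follows the paper's strategy almost exactly: reduce to the Markov chain via Lemmas \ref{lemma.side.sphere} and \ref{lemma.mc.spheres}, use Proposition \ref{prop.irrational} to get $t-|e(t)|\to\infty$, apply Lemma \ref{lemma:convergence}, and handle the period with a parity argument. The one (cosmetic) divergence is that you use the second equality of Lemma \ref{lemma.side.sphere}, obtaining a single starting edge $e(t)=\pi((g^{-1}y_{t+1},g^{-1}y_t))$ and the identity $\pi_\ast\sigma_t=\partial_{1\ast}(\delta_{e(t)}P^t)$, while the paper uses the first equality and writes $\pi_\ast\sigma_t$ as a uniform average of $\partial_{1\ast}(\delta_{(\pi(g^{-1}y_t),v_{i_t})}P^{t-1})$ over the neighbouring edges; your choice avoids that averaging and is a touch cleaner. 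One small remark on your conclusion: as your own parity bookkeeping shows, every limit point equals $\partial_{1\ast}\mu_{\Omega_0}$ (in your labelling), since $\partial_1(M_t)$ always has the same colour as $\tilde o$; thus what you have proved is inclusion of the limit set in $\{\partial_{1\ast}\mu_{\Omega_0},\partial_{1\ast}\mu_{\Omega_1}\}$, which is also what the paper's argument actually delivers and all that is needed for Theorem \ref{thm.dm.geofin}.
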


\begin{remark}\label{rk.limit.independent}
In this proposition, the measures $\pi_\ast \sigma_t$ depend on the point $x=g\Gamma$, but the set of limit points of $\pi_\ast \sigma_t$ does not.
\end{remark}

\begin{proof}




Combining Lemmas  \ref{lemma.side.sphere} and \ref{lemma.mc.spheres} and denoting $v_{i_t}:=\pi(\tilde{v}_{i_t})$, we have for any $t \in \mathbb{N}^\ast$
\begin{equation}\label{eq5}
\pi_* \sigma_t 
=\frac{1}{\deg(g^{-1}y_t)-1}\sum_{i_t=1}^{\deg(g^{-1}y_t)-1} \partial_1 {}_\ast (\delta_{(\pi(g^{-1}y_t),v_{i_t})}P^{t-1}).
\end{equation}

For a fixed $t \in \mathbb{N}$, the edges $(\pi(g^{-1}y_t),v_{i_t})$ belong to the same cyclic class, denote it by $\Omega_{j(t)}$. Up to passing to a subsequence (i.e.\ considering even or odd $t$'s), which we also denote by $t$, we may assume that $j(t)$ is constant. For each $t$, choose one vertex $v(t) \subset \{ v_{i_t} \}$ and denote the edge 
\begin{equation}\label{eq.defn.et}
e(t)= (\pi(g^{-1}y_t),v_t).
\end{equation}




Up to passing to a further subsequence of $t$'s, we may suppose that $\delta_{e(t)}P^{t-1}$ is supported in a single cyclic class.  Therefore, for some $r \in \{0,1\}$, every $t$ in this sequence writes as $t-1=2n(t)+r$, where $n(t) \in \mathbb{N}$. Thus we can write
\begin{equation}\label{eq7}
\delta_{e(t)}P^{t-1}=\delta_{e(t)}P^{2n(t)}P^r.
\end{equation}
Now, since by Proposition \ref{prop.irrational}, we have $t-|e(t)|\to \infty$, Lemma \ref{lemma:convergence} applies and we deduce that $\|\delta_{e(t)}P^{2n(t)} -\mu_{\Omega_j}\| \to 0$ as $t \to \infty$ for some $j \in \{0,1\}$. Therefore, we have
\[\|\delta_{e(t)}P^{t-1}-\mu_{\Omega_{i}}\| \to 0  \text{ as } t \to \infty. \]
where $i=j+r \; (\text{mod}\, 2)$. This finishes the proof. 
\end{proof}

\subsubsection{Proof of Theorem \ref{thm.dm.geofin}} 
With the notation of this section, we want to show that for any $\epsilon>0$, there exists a compact set $K\subset G/\Gamma$ such that for every $x\in X$ with non-compact $G^0_\eta$-orbit, there exists $N \in \mathbb{N}$ such that for all $t>N$
\begin{equation}\label{eq.thm.a}
\nu_{x,t}(K) \geq 1-\epsilon. 
\end{equation}

Let $L\subset EQ$ be a finite set such that $\mu_{\Omega_j}(L)>1-\varepsilon$ for $j\in \{0,1\}$. The set $K=\proj^{-1}(\partial_1(L))$ is compact, since the map $\proj: G/ \Gamma \to VQ$ has compact fibers. Using Lemma \ref{lem:nu_and_sigma*} and Proposition \ref{more.precise}, we have for any $x\in X$
     \[ \liminf_{t \to \infty} \nu_{x,t}(K) = \liminf_{t \to \infty} \pi_* \sigma_t(\partial_1(L))  \geq \min\{\mu_{\Omega_0}(L),\mu_{\Omega_1}(L)\}  > 1-\varepsilon,     \]
and the claim follows.




\section{Equidistribution}\label{sec.equidist}
This section is devoted to the proof of Theorem \ref{thm.equidist} which we deduce from Theorem \ref{thm.dm.geofin} and our previous work \cite{CFS}.

Fix a hyperbolic element $a \in G$ of translation length $2$ with attracting fixed point $\eta$. Denote by $\eta_- \in \partial T$ the repelling point of $a$ and set $M=G^0_\eta \cap G^0_{\eta_-}$.  Let $O$ be a $M$-invariant compact subset of $G_\eta^0$ with non-empty interior. Let $O_t=a^t O a^{-t}$ be the associated good F{\o}lner sequence for $G_\eta^0$. As before, for  $x \in X$, denote by $\nu_{x,t}$  the orbital measure $ m_{O_t}* \delta_x$.

Let $x\in X$ be such that $G^0_\eta$-orbit of $x$ is not compact. By Theorem \ref{thm.dm.geofin}, up to passing to a subsequence, we can suppose that
\begin{equation}\label{eq.m}
\nu_{x,t} \longrightarrow m 
\end{equation}
for the weak-$\ast$ topology and where $m$ is a Borel probability measure on $X$. Furthermore, since $O_t$ is a F{\o}lner sequence, $m$ is $G^0_\eta$-invariant. We need to show that $m=m_X$.

Recall that by \cite[Theorem 1.6]{CFS}, there exists countably many closed $G^0_\eta$-orbits in $X$. These are all compact and for each cusp of $\Gamma$, there exists precisely a discrete one parameter family of compact orbits. Denote by $k\in \mathbb{N}$ the number of cusps of $\Gamma$ and let $C_{i,j}$ be the collection of compact $G^0_\eta$-orbits, where $i=1,\ldots, k$ and $j \in \mathbb{Z}$. By the same result, we have $a C_{i,j}=C_{i,j+1}$ and $a^{-\ell}C_{i,j}$ escapes to infinity as $\ell \to \infty$ in the sense that for any compact set $K$, we have $K \cap a^{-\ell}C_{i,j}= \emptyset$ for every $\ell$ large enough (see e.g.\ proof of \cite[Lemma 6.2]{CFS}).

We first prove that $m(C_{i,j})=0$ for every $i=1,\ldots,k$ and $j \in \mathbb{Z}$. For a contradiction, suppose $m(C_{i_0,j_0})>0$ for some $i_0,j_0$. Denote $\frac{1}{2}m(C_{i_0,j_0})=:\epsilon>0$ and let $K=K(\epsilon)$ be the compact subset of $X$ given by Theorem \ref{thm.dm.geofin}. It follows by the latter result that we have
\begin{equation}\label{eq.choosing.K}
m(K) \geq 1-\epsilon.
\end{equation}

Choose an $\ell \in \mathbb{N}$ large enough so that $a^{-\ell}C_{i_0,j_0} \cap K =\emptyset$. Since $a^{-\ell}x$ does not lie on a compact $G^0_\eta$-orbit either, using Theorem \ref{thm.dm.geofin},  by passing to a further subsequence in \eqref{eq.m}, we can suppose that $\nu_{a^{-\ell}x,t}$ also converges to a $G^0_\eta$-invariant probability measure that we denote by $m^{a^{-\ell}}$. As in \eqref{eq.choosing.K}, by Theorem \ref{thm.dm.geofin}, we have $m^{a^{-\ell}}(K)\geq 1-\epsilon.$

Using the relation $a^{-\ell}O_t a^{\ell}=O_{t-\ell}$, one verifies by a simple calculation that we have $m^{a^{-\ell}}=a^{-\ell}_\ast m$. Using this, we deduce 
$$2\epsilon= m(C_{i_{0},j_0})=m^{a^{-\ell}}(a^{-\ell}C_{i_{0},j_0}) \leq m^{a^{-\ell}}(X \setminus K) \leq \frac{1}{2}m(C_{i_0,j_0})=\epsilon,$$
a contradiction.  Therefore, $m(C_{i,j})=0$ for all $i=1,\ldots,k$ and $j \in \mathbb{Z}$.

We mention that at this point, one could conclude the proof by appealing to the classification of ergodic $G_\eta^0$-invariant Borel probability measures \cite[Theorem 1.1]{CFS}. However, that result has extra hypotheses on $G$, namely Tits independence property and a certain transitivity condition. On the other hand, for a geometrically finite lattice $\Gamma$, it is possible to give a similar classification of ergodic $G_\eta^0$-invariant Borel probability measures on $G/\Gamma$ for a more general group $G$ as in Theorem \ref{thm.equidist}. We single this out in the next proposition which is essentially contained in \cite{CFS}.

\begin{proposition}\label{prop.geofin.measure}
Let $T$ be a $(d_1,d_2)$-biregular tree, with $d_1,d_2\geq 3$, and $G$ a non-compact, closed and topologically simple subgroup of $\Aut(T)$ acting transitively on $\partial T$. Let $\Gamma$ be a geometrically finite lattice in $G$ and $\eta \in \partial T$. Then, any $G^0_\eta$-invariant and ergodic Borel probability measure on $X=G/\Gamma$ is either $G^0_\eta$-homogeneous and compactly supported, or it is the Haar measure $m_X$.
\end{proposition}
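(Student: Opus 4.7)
The plan is to follow the strategy of \cite{CFS}, replacing the Tits independence and transitivity hypotheses used there by the geometric finiteness of $\Gamma$ together with Theorem \ref{thm.dm.geofin} and the orbit closure classification from \cite{CFS}. Let $\mu$ be a $G^0_\eta$-invariant ergodic Borel probability measure on $X$; the argument splits into two cases according to whether $\mu$ meets a compact $G^0_\eta$-orbit.

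The easy case is when $\mu$ is concentrated on the union of compact $G^0_\eta$-orbits. Since these form the countable family $\{C_{i,j}\}_{1\le i\le k,\, j\in\bbZ}$, ergodicity forces $\mu$ to be supported on a single $C_{i_0,j_0}$. Writing $C_{i_0,j_0}\cong G^0_\eta/L$ with $L$ cocompact, the orbit admits a unique $G^0_\eta$-invariant probability measure, so $\mu$ is $G^0_\eta$-homogeneous and compactly supported.

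The substantive case is when $\mu$ gives zero mass to $\bigcup_{i,j} C_{i,j}$. Then $\mu$-a.e. $x$ has non-compact, hence by \cite[Theorem 1.6]{CFS} dense, $G^0_\eta$-orbit, so $\supp\mu=X$. To conclude $\mu=m_X$ I would proceed as follows. First, by amenability of $G^0_\eta$ and Lindenstrauss's pointwise ergodic theorem applied to the tempered F\o lner sequence $(F_t)_t$, the orbital averages $\nu_{x,t}$ converge weak-$\ast$ to $\mu$ for $\mu$-a.e. $x$. Next, the conjugation identity $aF_ta^{-1}=F_{t+2}$ gives the change of variables $a_{\ast}\nu_{x,t}=\nu_{ax,t+2}$, which together with Theorem \ref{thm.dm.geofin} (preventing escape of mass when we pass to the weak-$\ast$ limit) promotes $G^0_\eta$-invariance of $\mu$ to invariance under the group generated by $G^0_\eta$ and $a$, which contains the parabolic subgroup $P_\eta=MAG^0_\eta$ stabilizing $\eta$. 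At this stage, the Howe--Moore property for $G$ on $(X,m_X)$ established in \cite{burger-mozes.product}, combined with the topological simplicity of $G$ via a Mautner-type argument, forces $\mu$ to be $G$-invariant. Since $G$ acts transitively on $X=G/\Gamma$, the unique $G$-invariant probability measure is $m_X$, so $\mu=m_X$.

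The main obstacle lies in the step from $G^0_\eta$-invariance to $a$-invariance of $\mu$: a naive weak-$\ast$ continuity argument applied to $a_{\ast}\nu_{x,t}=\nu_{ax,t+2}$ only yields that $a_{\ast}\mu$ is another $G^0_\eta$-ergodic probability measure with the same qualitative properties as $\mu$ (full support, zero mass on compact orbits), not equality. To close the gap one must compare the $G^0_\eta$-ergodic decomposition of $\mu$ and that of $a_{\ast}\mu$ and rule out that they are mutually singular, leveraging the fact that both measures have full topological support $X$ together with the tightness provided by Theorem \ref{thm.dm.geofin}. Once $a$-invariance has been secured, the extension to full $G$-invariance is by a standard Howe--Moore/Mautner argument exploiting that $G$ is topologically simple, so no proper closed subgroup can contain both $a$ and $G^0_\eta$ and still be normal in $G$.
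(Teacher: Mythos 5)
The case split and the first (compact-orbit) case match the paper, as does the use of Lindenstrauss' pointwise ergodic theorem to produce a generic point $y$ and the appeal to \cite[Theorem 1.6]{CFS} to ensure density. However, from that point on the proposal diverges, and the divergence contains a gap that you yourself flag but do not close.

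The paper never proves $a$-invariance of $\mu$, and this is precisely the step that cannot be obtained the way you suggest. The identity $a_\ast\nu_{x,t}=\nu_{ax,t+2}$ is true, but to conclude $a_\ast\mu=\mu$ one would need that $ax$ is also generic for $\mu$, which is not automatic: the set of $\mu$-generic points is $\mu$-conull, but there is no reason for it to be $a$-invariant (the $a$-action a priori has no compatibility with $\mu$). Theorem~\ref{thm.dm.geofin} only controls escape of mass of the $\nu_{x,t}$, not the location of the limit, so it does not identify the limit of $\nu_{ax,t}$ with $\mu$. Your proposed repair -- comparing the $G^0_\eta$-ergodic decompositions of $\mu$ and $a_\ast\mu$ and ruling out mutual singularity on the grounds of full support and tightness -- does not work either: full topological support and tightness do not preclude two singular ergodic measures (compare the classical situation of a mixing measure and an atomic one both with full support). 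Moreover, even if one granted $a$-invariance, the subsequent passage from $\langle a, G^0_\eta\rangle$-invariance to $G$-invariance via a ``Mautner-type argument'' is not standard: the Howe--Moore property gives mixing of $a$ on $(X,m_X)$, not on $(X,\mu)$, so it cannot be applied directly to $\mu$.

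What the paper actually does is a Margulis-type ``orbit thickening'' argument that bypasses $a$-invariance of $\mu$ altogether. Using \cite[Lemma 6.2]{CFS}, the generic point $y$ satisfies $a^{-n_k}y\in K'$ for a compact set $K'$ and some $n_k\to\infty$. Mixing of the action of $a$ with respect to $m_X$ (via Howe--Moore and topological simplicity of $G$) together with \cite[Lemma 6.3]{CFS} then shows that the averages of the $M$-averaged test function $\hat\theta$ over $O_{t_0+n_k}$ at the point $y$ converge to $\int\hat\theta\,dm_X$. By genericity the same averages converge to $\int\hat\theta\,dm_0$. Since both $m_0$ and $m_X$ are $G^0_\eta$-invariant and $M\leq G^0_\eta$, Fubini removes the $M$-averaging, giving $\int\theta\,dm_0=\int\theta\,dm_X$ and hence $m_0=m_X$. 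If you want to follow your outline you would need to replace the ``promote to $a$-invariance'' step by this recurrence-plus-mixing argument.
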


To finish the proof of Theorem \ref{thm.equidist}, consider an ergodic decomposition of the $G^0_\eta$-invariant probability measure $m$. Since there are countably many closed $G^0_\eta$-orbits and each of them has zero measure with respect to $m$, the same holds for almost every ergodic component of $m$. Therefore by  Proposition \ref{prop.geofin.measure} almost every ergodic component of $m$ is the Haar measure $m_X$, hence $m=m_X$ completing the proof of Theorem \ref{thm.equidist}. \qed

\begin{proof}[Proof of Proposition \ref{prop.geofin.measure}]
We use the same notation introduced in the beginning of the proof of Theorem \ref{thm.equidist}, namely $a$ is a hyperbolic element with attracting fixed point $\eta \in \partial T$, the group $M$ and the good F\o lner sequence $O_t$ are as defined there. Let $m_0$ be a $G^0_\eta$-invariant and ergodic probability measure on $X$. If $m_0$ gives positive mass to a compact $G^0_\eta$-orbit, then by ergodicity, it must be the homogeneous measure supported on that orbit.
So let us suppose that $m_0$ gives zero mass to each compact $G^0_\eta$-orbit. By pointwise ergodic theorem for amenable groups (\cite[Theorem 1.2]{lindenstrauss}),  there exists a point $y \in X$ that is generic with respect to $m_0$ and the tempered F{\o}lner sequence $O_t$ (see \cite[\S 2.3]{CFS}). By \cite[Theorem 1.6]{CFS}, $G^0_\eta$-orbit of $y$ is dense in $X$. Then, by \cite[Lemma 6.2]{CFS}, there exists a compact set $K'$ in $X$, a sequence of integers $n_k \to \infty$ such that $a^{-n_k}y \in K'$ for every $k \in \mathbb{N}$. For a function $\theta \in C_c(X)$, denote  $\hat{\theta}(z):=\int \theta(mz)dm_M(z)$ where $m_M$ is the Haar probability measure on $M$. The function $\hat{\theta}$ is clearly $M$-invariant. Since $G$ is closed, transitive and topologically simple, it has the Howe--Moore property \cite[Proposition 4.2]{burger-mozes.product} and in particular the action of the hyperbolic element $a$ on $X$ is mixing. Therefore we can apply \cite[Lemma 6.3]{CFS}, where we can take $O^+$ to be $O_{t_0}$ for some $t_0 \in \mathbb{Z}$ small enough, for every $\theta \in C_c(X)$, we have

\begin{equation}\label{eq.howe.moore}
\frac{1}{m_{G^0_\eta}(O_{t_0+n_k})} \int_{O_{t_0+n_k}} \hat{\theta}(uy) dm_{G^0_\eta}(u) \underset{k \to \infty}{\longrightarrow} \int \hat{\theta}(z) dm_X(z).
\end{equation}

On the other hand by choice of $y \in X$, the left-hand-side above also converges to $\int \hat{\theta}(z) dm_0(z)$. It follows 

$$
\int \int \theta(mz) dm_M(m) dm_0(z)=\int \int \theta(mz) dm_M(m) dm_X(z).
$$

But since $m_0$ and $m_X$ are $G^0_\eta$-invariant, by Fubini's theorem, it follows that

$$
\int \theta(z) dm_0(z)=\int \theta(z)  dm_X(z)
$$
in other words, $m_0=m_X$ as required.
\end{proof}



\section{Escape of mass phenomenon}\label{sec.escape}

This section contains the proofs of Theorems \ref{thm.escape} and \ref{thm.escape.and.equidist}.

We start by proving an escape of mass result that implies Theorem \ref{thm.escape}. Regarding the construction of a lattice $\Gamma<\Aut(T)$ that figures in the following result, we note that by \cite[\S 4.11, Example 1]{bass-lubotzky}, for every $q \geq 2$, there exists a lattice $\Gamma \leq G = \Aut(T_{2q+2})$ whose associated edge-indexed graph is as in Fig.~\ref{fig:2q-ray}. Clearly, this $\Gamma$ is not geometrically finite. 

\begin{figure}[H]
    \centering

\tikzset{every picture/.style={line width=0.75pt}} 

\begin{tikzpicture}[x=0.75pt,y=0.75pt,yscale=-1,xscale=1]

\draw    (31.85,130.02) -- (128.15,130.98) ;
\draw [shift={(130.5,131)}, rotate = 0.57] [color={rgb, 255:red, 0; green, 0; blue, 0 }  ][line width=0.75]      (0, 0) circle [x radius= 3.35, y radius= 3.35]   ;
\draw [shift={(29.5,130)}, rotate = 0.57] [color={rgb, 255:red, 0; green, 0; blue, 0 }  ][line width=0.75]      (0, 0) circle [x radius= 3.35, y radius= 3.35]   ;
\draw    (131.75,128.65) .. controls (152.57,91.58) and (209.19,91.8) .. (230.55,130.22) ;
\draw [shift={(231.5,132)}, rotate = 62.88] [color={rgb, 255:red, 0; green, 0; blue, 0 }  ][line width=0.75]      (0, 0) circle [x radius= 3.35, y radius= 3.35]   ;
\draw [shift={(130.5,131)}, rotate = 296.57] [color={rgb, 255:red, 0; green, 0; blue, 0 }  ][line width=0.75]      (0, 0) circle [x radius= 3.35, y radius= 3.35]   ;
\draw    (131.69,133.29) .. controls (151.58,169.43) and (208.21,169.25) .. (230.51,133.65) ;
\draw [shift={(231.5,132)}, rotate = 300.07] [color={rgb, 255:red, 0; green, 0; blue, 0 }  ][line width=0.75]      (0, 0) circle [x radius= 3.35, y radius= 3.35]   ;
\draw [shift={(130.5,131)}, rotate = 64.03] [color={rgb, 255:red, 0; green, 0; blue, 0 }  ][line width=0.75]      (0, 0) circle [x radius= 3.35, y radius= 3.35]   ;
\draw    (232.75,129.65) .. controls (253.57,92.58) and (310.19,92.8) .. (331.55,131.22) ;
\draw [shift={(332.5,133)}, rotate = 62.88] [color={rgb, 255:red, 0; green, 0; blue, 0 }  ][line width=0.75]      (0, 0) circle [x radius= 3.35, y radius= 3.35]   ;
\draw [shift={(231.5,132)}, rotate = 296.57] [color={rgb, 255:red, 0; green, 0; blue, 0 }  ][line width=0.75]      (0, 0) circle [x radius= 3.35, y radius= 3.35]   ;
\draw    (232.69,134.29) .. controls (252.58,170.43) and (309.21,170.25) .. (331.51,134.65) ;
\draw [shift={(332.5,133)}, rotate = 300.07] [color={rgb, 255:red, 0; green, 0; blue, 0 }  ][line width=0.75]      (0, 0) circle [x radius= 3.35, y radius= 3.35]   ;
\draw [shift={(231.5,132)}, rotate = 64.03] [color={rgb, 255:red, 0; green, 0; blue, 0 }  ][line width=0.75]      (0, 0) circle [x radius= 3.35, y radius= 3.35]   ;
\draw    (333.75,130.65) .. controls (354.57,93.58) and (411.19,93.8) .. (432.55,132.22) ;
\draw [shift={(433.5,134)}, rotate = 62.88] [color={rgb, 255:red, 0; green, 0; blue, 0 }  ][line width=0.75]      (0, 0) circle [x radius= 3.35, y radius= 3.35]   ;
\draw [shift={(332.5,133)}, rotate = 296.57] [color={rgb, 255:red, 0; green, 0; blue, 0 }  ][line width=0.75]      (0, 0) circle [x radius= 3.35, y radius= 3.35]   ;
\draw    (333.69,135.29) .. controls (353.58,171.43) and (410.21,171.25) .. (432.51,135.65) ;
\draw [shift={(433.5,134)}, rotate = 300.07] [color={rgb, 255:red, 0; green, 0; blue, 0 }  ][line width=0.75]      (0, 0) circle [x radius= 3.35, y radius= 3.35]   ;
\draw [shift={(332.5,133)}, rotate = 64.03] [color={rgb, 255:red, 0; green, 0; blue, 0 }  ][line width=0.75]      (0, 0) circle [x radius= 3.35, y radius= 3.35]   ;

\draw (479.35,131) node [scale=1.25] [align=left] {. . .};
\draw (47,140) node [scale=0.7] [align=left] {$\displaystyle 2q+2$};

\draw (117,140) node [scale=0.7] [align=left] {$\displaystyle 2q$};
\draw (225.5,151) node [scale=0.7] [align=left] {$\displaystyle q$};
\draw (137,151) node [scale=0.7] [align=left] {$\displaystyle 1$};
\draw (137,110) node [scale=0.7] [align=left] {$\displaystyle 1$};
\draw (225.5,110) node [scale=0.7] [align=left] {$\displaystyle q$};
\draw (326.5,151) node [scale=0.7] [align=left] {$\displaystyle q$};
\draw (236.5,151) node [scale=0.7] [align=left] {$\displaystyle 1$};
\draw (236.5,110) node [scale=0.7] [align=left] {$\displaystyle 1$};
\draw (326.5,110) node [scale=0.7] [align=left] {$\displaystyle q$};
\draw (427.5,151) node [scale=0.7] [align=left] {$\displaystyle q$};
\draw (336.5,151) node [scale=0.7] [align=left] {$\displaystyle 1$};
\draw (336.5,110) node [scale=0.7] [align=left] {$\displaystyle 1$};
\draw (427.5,110) node [scale=0.7] [align=left] {$\displaystyle q$};
\draw (150,129) node  [align=left] {$\displaystyle x_{1}$};
\draw (252,129) node  [align=left] {$\displaystyle x_{2}$};
\draw (351,129) node  [align=left] {$\displaystyle x_{3}$};
\draw (448,129) node  [align=left] {$\displaystyle x_{4}$};
\draw (18,127) node  [align=left] {$\displaystyle x_0$};

\end{tikzpicture}

\caption{An edge-indexed graph $(Q,\ind)$ of a non-geometrically finite lattice $\Gamma \leq \Aut(T_{2q+2})$.} 
\label{fig:2q-ray}
\end{figure}
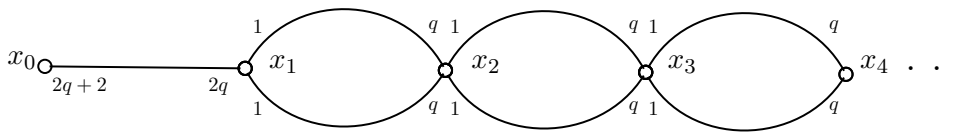

\begin{theorem}\label{thm:escape_of_mass}
Let $\Gamma$ be a tree lattice with associated edge-indexed graph $(Q, \ind)$ as in Fig.~\ref{fig:2q-ray}. Let $x=e\Gamma \in X = G/\Gamma$ be the trivial coset. 
Let $\xi \in \partial T$ be the end corresponding to the sequence $\{\tilde{x}_i\}_{i \in \bbN}$ for some lifts of $x_i$ and $G_\xi^0$ be the corresponding horospherical subgroup. Then for any compact $K\subset X$ 
\[ \lim_{t\to \infty} \nu_{x,t}(K) = 0  . \]
\end{theorem}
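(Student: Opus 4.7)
The plan is to exploit the reduction developed in \S\ref{sec.rec} of the orbital measures $\nu_{x,t}$ to the distributions of the associated Markov chain $M_n$ on $EQ$, and combine it with a one-dimensional martingale concentration argument. By \eqref{eq.folners1}, it suffices to treat the F\o lner sequence $F_t$. Take $\tilde{o} := \tilde{x}_0$ and note $g = e$; Remark~\ref{rk.general} ensures that Lemmas~\ref{lem:nu_and_sigma*}, \ref{lemma.side.sphere} and \ref{lemma.mc.spheres} apply to give
\[
\proj_{*}\nu_{x,t} \;=\; \pi_{*}\sigma_{t} \;=\; \partial_{1*}\!\bigl(\delta_{e(t)}P^{t}\bigr),
\]
where $e(t) \in EQ$ is the projection of the directed edge $(\tilde{x}_{t+1},\tilde{x}_{t})$ in $T$, a ``back-pointing'' edge whose $\partial_{1}$ is $x_{t}$. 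Since $\proj \colon X \to VQ$ has compact fibers and $VQ$ is discrete, any compact $K \subset X$ is contained in $\proj^{-1}(\{x_{0},\ldots,x_{N}\})$ for some $N \in \mathbb{N}$. Setting $P_{n} := d(\partial_{1}(M_{n}), x_{0})$, it therefore suffices to show that $\mathbb{P}_{e(t)}(P_{t} \leq N) \to 0$ as $t \to \infty$, for every fixed $N$.

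A direct inspection of the transition probabilities on the graph of Fig.~\ref{fig:2q-ray} shows that $|P_{n+1} - P_{n}| = 1$ almost surely, and the uniform lower bound
\[
\mathbb{P}(P_{n+1} = P_{n}+1 \mid M_{n}) \;\geq\; \frac{1}{2q+1}
\]
holds for every state: from a back-pointing edge ending at $x_{i}$ with $i \geq 1$ the upward probability equals $1/(2q+1)$; from a forward-pointing edge it equals $2/(2q+1)$; and from an edge ending at $x_{0}$ the chain jumps deterministically to $x_{1}$. Setting $U_{n} := \mathbbm{1}[P_{n+1} = P_{n}+1]$, the identity $P_{t} - P_{0} = 2\sum_{n<t} U_{n} - t$ together with $P_{0} = t$ gives $P_{t} = 2\sum_{n<t} U_{n}$. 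Since $(U_{n} - \mathbb{E}[U_{n} \mid \mathcal{F}_{n}])_{n}$ is a martingale difference sequence with increments bounded by $1$, the Azuma--Hoeffding inequality yields
\[
\mathbb{P}\!\left(\sum_{n<t} U_{n} \,\leq\, \tfrac{t}{2q+1} - t^{3/4}\right) \;\leq\; \exp(-\sqrt{t}/2).
\]
Hence $P_{t} \geq \tfrac{2t}{2q+1} - 2t^{3/4}$ with probability at least $1 - \exp(-\sqrt{t}/2)$, and in particular $\mathbb{P}_{e(t)}(P_{t} \leq N) \to 0$ for every fixed $N$, which proves the theorem.

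The principal subtlety lies in identifying the structural feature of the edge-indexed graph in Fig.~\ref{fig:2q-ray} that produces the uniform positive lower bound on the upward transition probability, namely the presence of two parallel edges between consecutive vertices of the ray: the one parallel edge that is not the backtracking edge contributes probability $1/(2q+1)$ even from a back-pointing state. By contrast, for a single-edge Nagao ray, the chain started on a back-pointing edge advances deterministically toward the origin, no escape of mass occurs, and this is consistent with the geometric finiteness of such lattices. It is also noteworthy that the chain $M_n$ on our graph is in fact positive recurrent (its stationary measure has mass governed by $\sum_{i}q^{-i}$); nevertheless its mixing time from $e(t)$ is $\Omega(t)$, so within $t$ steps it cannot approach the stationary distribution, and this mismatch between the starting position and the available time is precisely what drives the escape of mass.
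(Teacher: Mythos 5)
Your proof is correct, and it takes a genuinely different route from the paper's. The reduction step (passing from $\nu_{x,t}$ to the Markov chain $M_n$ via Lemmas~\ref{lem:nu_and_sigma*}, \ref{lemma.side.sphere}, \ref{lemma.mc.spheres}, with $e(t)$ the projection of $(\tilde{x}_{t+1},\tilde{x}_t)$) is the same. Where the arguments diverge is in how the decay of $\pi_*\sigma_t(x_l)$ is established. The paper counts paths directly: it identifies $F_t\tilde{o}$ with non-backtracking length-$t$ paths in $T$ starting at $\tilde{x}_t$ avoiding $\tilde{x}_{t+1}$, observes that any such path projecting to a terminal vertex $x_l$ must take $t-l/2$ steps left and $l/2$ steps right, and then bounds the number of lifts by $\binom{t}{l/2}(2q)^{t-l/2}2^{l/2}$; dividing by $|F_t\tilde{o}|=(2q+1)^t$ gives a bound of the form $(2q/(2q+1))^t\,g(t)$ for a polynomial $g$, which tends to zero. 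Your argument instead isolates the probabilistic mechanism: the process $P_n = d(\partial_1(M_n),x_0)$ is a nearest-neighbor walk on $\bbN$ whose upward transition probability is uniformly bounded below by $1/(2q+1)$ at every state (you correctly identify the second parallel edge as the source of this lower bound), and Azuma--Hoeffding then gives linear growth of $P_t$ with subgaussian concentration, from which $\mathbb{P}_{e(t)}(P_t\leq N)\to 0$ for every fixed $N$. The two approaches buy different things: the paper's counting is completely elementary and self-contained but tied to the specific combinatorial structure of Fig.~\ref{fig:2q-ray}; your martingale bound is slightly heavier machinery but cleanly generalizes to any edge-indexed graph supported on a ray with a uniform lower bound on the ``outward'' transition probability, and it also makes transparent the contrast with the Nagao ray (where the outward probability drops to zero from back-pointing edges) and the mismatch you note between the chain's mixing time and the distance $|e(t)|=t$ of the moving initial state, a point the paper addresses more globally through Lemma~\ref{lemma:convergence} and the remark at the end of \S\ref{subsec.effective}.
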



\begin{proof}
By \eqref{eq.folners1}, it clearly suffices to prove the statement for the orbital measures $\nu_{x,t}$ associated to the F\o lner sequence $F_t$. Let $\tilde{o}$ be a lift of the left-most vertex $x_0$ to $T=T_{2q+2}$. By Lemma \ref{lem:nu_and_sigma*} and the fact that $\proj$ has compact fibers, it is enough to show that if $\sigma_t$ is the uniform measure on $F_t \tilde{o}\subset VT$, then for any $x_l\in VQ$, we have $\pi_*\sigma_t (x_l) \to 0$. 

The set $F_t\tilde{o}$ can be identified with all non-backtracking paths in $T$ of length $t$ that start at $\tilde{x}_{t}$ and do not contain $\tilde{x}_{t+1}$. A path from $\tilde{x}_t$ to a vertex $y\in F_t\tilde{o}$ with $\pi(y)=x_l$ projects to a path in $Q$ between $x_t$ and $x_l$. Note that the projection of such paths to $Q$ can only contain $x_0$ as endpoint. These will allow us to bound the number of such paths.

Without loss of generality, assume that $t$ is even. For $l$ an even non-negative integer, we claim that the number of vertices in $F_t\tilde{o}$ that project to $x_l$ is bounded above by
\[  {t \choose l/2}  \cdot (2q)^{t-l/2} \cdot 2^{l/2}.  \]
Indeed, any such path from $x_t$ to $x_l$ must take $t-l/2$ steps to the left and $l/2$ steps to the right in Fig.~\ref{fig:2q-ray}. The binomial coefficient counts the number of choices when to take the right step. Since the projection of the paths that we consider can only contain $x_0$ as endpoint, for any choice of such path, each edge taken to the right has at most $2$ lifts to $T$, while each edge taken to the left has at most $2q$ lifts. Therefore, for any even $l\geq 0$
\[   \pi_* \sigma_t(x_l)  \leq \left(\frac{2q}{2q+1}\right)^t g(t) \underset{t \to \infty}{\longrightarrow} 0, \]
where $g(t)$ is a polynomial in $t$ of degree $\leq l/2$. \end{proof}


The rest of this section is devoted to the proof of Theorem \ref{thm.escape.and.equidist}. Its proof consists of four parts. In the first part, we construct an uncountable family of lattices $\Gamma_\alpha$ in $\Aut(T_6)$. In the second part, thanks to an auxiliary Markov chain that we introduce, we obtain subgaussian concentration estimates on the Markov chain associated with the lattice $\Gamma_\alpha$ (see \S \ref{subsec:mc-def}). In the third part, we show that the space $\Aut(T_6)/\Gamma_\alpha$ contains points $x$ which exhibit escape of mass along some subsequence of horospherical orbital averages and along some other subsequences equidistribute to the Haar measure, as we show in the fourth part. 

\begin{proof}[Proof of Theorem \ref{thm.escape.and.equidist}]

\textit{First part: Construction of $\Gamma_\alpha$.} For each $\alpha \in (1,2)$, we will construct  an edge-indexed graph $(Q_\alpha,\ind)$ of finite volume, which will yield a lattice $\Gamma_\alpha \leq \Aut(T_6)$.
First, the underlying graph is a ray, with vertices $\{x_i\}_{i=0}^\infty$ and edges $e_i = (x_i,x_{i+1})$.

Let $\alpha \in (1,2)$ and for $i \geq 1$, let $n_i = \lfloor \alpha^i \rfloor$. We divide the vertices $(x_i)_{i \geq 1}$ of the ray into two types: $x_j$ is \textbf{black} if $j=i+n_1+ \cdots +n_i$ for some $i \geq 1$ and \textbf{white} otherwise. In other words, there are blocks of $n_i$ consecutive white vertices that are separated by single appearances of black vertices. A white vertex $x_j$ is said to belong to $i-$th \textbf{block} if
\[  i+n_1 + \cdots n_i < j < i +1 + n_1 + \cdots +n_{i+1}. \]
We say that an edge $e$ belongs to the $i$-th block if both $\partial_0 e$ and $\partial_1 e$ do.

We define the index map on $EQ_\alpha$:  for $i>0$, let $\ind(e_i)=2$ and $\ind(\bar{e}_{i-1})=4$ if $x_i$ is black and $\ind(e_i)=\ind(\bar{e}_{i-1})=3$ if $x_i$ is white. Set $\ind(e_0)=6$. See Fig.~\ref{fig:gamma_alpha} for illustration.

\tikzset{every picture/.style={line width=0.75pt}} 
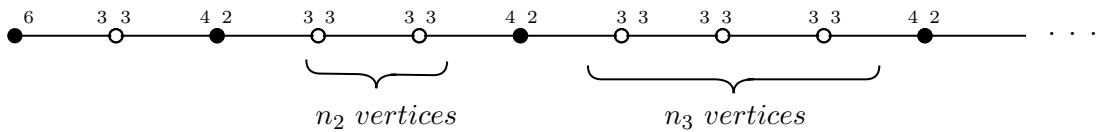
\begin{figure}[H]
    \centering

\begin{tikzpicture}[x=0.75pt,y=0.75pt,yscale=-1,xscale=1]

\draw    (29.5,130) -- (78.15,130) ;
\draw [shift={(80.5,130)}, rotate = 0] [color={rgb, 255:red, 0; green, 0; blue, 0 }  ][line width=0.75]      (0, 0) circle [x radius= 3.35, y radius= 3.35]   ;
\draw [shift={(29.5,130)}, rotate = 0] [color={rgb, 255:red, 0; green, 0; blue, 0 }  ][fill={rgb, 255:red, 0; green, 0; blue, 0 }  ][line width=0.75]      (0, 0) circle [x radius= 3.35, y radius= 3.35]   ;
\draw    (82.85,130) -- (129.15,130) ;
\draw [shift={(131.5,130)}, rotate = 0] [color={rgb, 255:red, 0; green, 0; blue, 0 }  ][line width=0.75]      (0, 0) circle [x radius= 3.35, y radius= 3.35]   ;
\draw [shift={(80.5,130)}, rotate = 0] [color={rgb, 255:red, 0; green, 0; blue, 0 }  ][line width=0.75]      (0, 0) circle [x radius= 3.35, y radius= 3.35]   ;
\draw    (131.5,130) -- (180.15,130) ;
\draw [shift={(182.5,130)}, rotate = 0] [color={rgb, 255:red, 0; green, 0; blue, 0 }  ][line width=0.75]      (0, 0) circle [x radius= 3.35, y radius= 3.35]   ;
\draw [shift={(131.5,130)}, rotate = 0] [color={rgb, 255:red, 0; green, 0; blue, 0 }  ][fill={rgb, 255:red, 0; green, 0; blue, 0 }  ][line width=0.75]      (0, 0) circle [x radius= 3.35, y radius= 3.35]   ;
\draw    (184.85,130) -- (231.15,130) ;
\draw [shift={(233.5,130)}, rotate = 0] [color={rgb, 255:red, 0; green, 0; blue, 0 }  ][line width=0.75]      (0, 0) circle [x radius= 3.35, y radius= 3.35]   ;
\draw [shift={(182.5,130)}, rotate = 0] [color={rgb, 255:red, 0; green, 0; blue, 0 }  ][line width=0.75]      (0, 0) circle [x radius= 3.35, y radius= 3.35]   ;
\draw    (235.85,130) -- (284.5,130) ;
\draw [shift={(284.5,130)}, rotate = 0] [color={rgb, 255:red, 0; green, 0; blue, 0 }  ][fill={rgb, 255:red, 0; green, 0; blue, 0 }  ][line width=0.75]      (0, 0) circle [x radius= 3.35, y radius= 3.35]   ;
\draw [shift={(233.5,130)}, rotate = 0] [color={rgb, 255:red, 0; green, 0; blue, 0 }  ][line width=0.75]      (0, 0) circle [x radius= 3.35, y radius= 3.35]   ;
\draw    (286.85,130) -- (333.15,130) ;
\draw [shift={(335.5,130)}, rotate = 0] [color={rgb, 255:red, 0; green, 0; blue, 0 }  ][line width=0.75]      (0, 0) circle [x radius= 3.35, y radius= 3.35]   ;
\draw [shift={(284.5,130)}, rotate = 0] [color={rgb, 255:red, 0; green, 0; blue, 0 }  ][line width=0.75]      (0, 0) circle [x radius= 3.35, y radius= 3.35]   ;
\draw    (337.85,130) -- (384.15,130) ;
\draw [shift={(386.5,130)}, rotate = 0] [color={rgb, 255:red, 0; green, 0; blue, 0 }  ][line width=0.75]      (0, 0) circle [x radius= 3.35, y radius= 3.35]   ;
\draw [shift={(335.5,130)}, rotate = 0] [color={rgb, 255:red, 0; green, 0; blue, 0 }  ][line width=0.75]      (0, 0) circle [x radius= 3.35, y radius= 3.35]   ;
\draw    (388.85,130) -- (435.15,130) ;
\draw [shift={(437.5,130)}, rotate = 0] [color={rgb, 255:red, 0; green, 0; blue, 0 }  ][line width=0.75]      (0, 0) circle [x radius= 3.35, y radius= 3.35]   ;
\draw [shift={(386.5,130)}, rotate = 0] [color={rgb, 255:red, 0; green, 0; blue, 0 }  ][line width=0.75]      (0, 0) circle [x radius= 3.35, y radius= 3.35]   ;
\draw    (439.85,130) -- (486.15,130) ;
\draw [shift={(488.5,130)}, rotate = 0] [color={rgb, 255:red, 0; green, 0; blue, 0 }  ][line width=0.75]      (0, 0) circle [x radius= 3.35, y radius= 3.35]   ;
\draw [shift={(437.5,130)}, rotate = 0] [color={rgb, 255:red, 0; green, 0; blue, 0 }  ][line width=0.75]      (0, 0) circle [x radius= 3.35, y radius= 3.35]   ;
\draw    (488.5,130) -- (539.5,130) ;

\draw [shift={(488.5,130)}, rotate = 0] [color={rgb, 255:red, 0; green, 0; blue, 0 }  ][fill={rgb, 255:red, 0; green, 0; blue, 0 }  ][line width=0.75]      (0, 0) circle [x radius= 3.35, y radius= 3.35]   ;
\draw   (176.5,142) .. controls (176.43,146.67) and (178.73,149.03) .. (183.4,149.1) -- (201.9,149.36) .. controls (208.57,149.45) and (211.87,151.83) .. (211.8,156.5) .. controls (211.87,151.83) and (215.23,149.55) .. (221.9,149.64)(218.9,149.6) -- (240.4,149.9) .. controls (245.07,149.97) and (247.43,147.67) .. (247.5,143) ;
\draw   (318.5,145) .. controls (318.5,149.67) and (320.83,152) .. (325.5,152) -- (382,152) .. controls (388.67,152) and (392,154.33) .. (392,159) .. controls (392,154.33) and (395.33,152) .. (402,152)(399,152) -- (458.5,152) .. controls (463.17,152) and (465.5,149.67) .. (465.5,145) ;

\draw (36.5,120.39) node  [font=\tiny] [align=left] {$\displaystyle 6$};
\draw (136.5,120.39) node  [font=\tiny] [align=left] {$\displaystyle 2$};
\draw (125.5,120.39) node  [font=\tiny] [align=left] {$\displaystyle 4$};
\draw (73.5,120.39) node  [font=\tiny] [align=left] {$\displaystyle 3$};
\draw (85.5,120.39) node  [font=\tiny] [align=left] {$\displaystyle 3$};
\draw (177.5,120.39) node  [font=\tiny] [align=left] {$\displaystyle 3$};
\draw (188.5,120.39) node  [font=\tiny] [align=left] {$\displaystyle 3$};
\draw (228.5,120.39) node  [font=\tiny] [align=left] {$\displaystyle 3$};
\draw (239.5,120.39) node  [font=\tiny] [align=left] {$\displaystyle 3$};
\draw (290.5,120.39) node  [font=\tiny] [align=left] {$\displaystyle 2$};
\draw (279.5,120.39) node  [font=\tiny] [align=left] {$\displaystyle 4$};
\draw (432.5,120.39) node  [font=\tiny] [align=left] {$\displaystyle 3$};
\draw (443.5,120.39) node  [font=\tiny] [align=left] {$\displaystyle 3$};
\draw (335.5,120.39) node  [font=\tiny] [align=left] {$\displaystyle 3$};
\draw (346.5,120.39) node  [font=\tiny] [align=left] {$\displaystyle 3$};
\draw (377.5,120.39) node  [font=\tiny] [align=left] {$\displaystyle 3$};
\draw (388.5,120.39) node  [font=\tiny] [align=left] {$\displaystyle 3$};
\draw (493.5,120.39) node  [font=\tiny] [align=left] {$\displaystyle 2$};
\draw (482.5,120.39) node  [font=\tiny] [align=left] {$\displaystyle 4$};
\draw (217,171) node   [align=left] {$\displaystyle n_2 \ vertices$};
\draw (393,171) node   [align=left] {$\displaystyle n_3 \ vertices$};
\draw (563,129) node   [align=left] {. . .};

\end{tikzpicture}
\caption{Edge-indexed graph $(Q_\alpha, \ind)$. Second and third blocks are marked.}
\label{fig:gamma_alpha}
\end{figure}

One easily checks that $(Q_\alpha,\ind)$ has bounded denominators (\cite[page 23]{bass-lubotzky}) and, hence, has a faithful finite grouping which yields a discrete subgroup $\Gamma_\alpha$ in $\Aut(T_{6})$. Moreover, $\vol(Q_\alpha) \leq 1+ 4 \sum_{i \geq 1} (n_i +1)\frac{1}{2^{i-1}}  < \infty$ (see \eqref{eq:volume-formula}), thus $\Gamma_\alpha$ is a lattice.\\

\textit{Second part: An auxiliary chain and subgaussian  estimates.} Consider the edge-indexed graph $Q_\alpha$ and the Markov chain $M_n$ on $EQ$ as in $\S \ref{subsec:mc-def}$.  
For an edge $e_j$ that belongs to some block $i$, the transition probabilities of $M_n$ are given by
\begin{equation}\label{eq.b.transitions}
P(e_j,e_{j+1})=P(\bar{e}_j,\bar{e}_{j-1})=\frac{3}{5}, \qquad
P(e_j,\bar{e}_j)=P(\bar{e}_j,e_j)=\frac{2}{5}.
\end{equation}

In view of the reductions in Section \ref{sec.rec}, we are interested in understanding the distribution of $\partial_0 {}_\ast \delta_{e_j} P^m$. For this, consider an auxiliary Markov kernel $\bar{P}$ on a state space consisting of two elements $\{e,\bar{e}\}$ with transition probabilities
\begin{equation}\label{eq.micky.transitions}
\bar{P}(e,e)=\bar{P}(\bar{e},\bar{e})=\frac{3}{5}, \qquad \bar{P}(e,\bar{e})=\bar{P}(\bar{e},e)=\frac{2}{5}. 
\end{equation}
This auxiliary Markov Chain records the behavior of $M_n$ along the edges within some block. It remembers the probabilities of an edge to turn around or to continue further in the same direction (see e.g.\ Fig.~\ref{fig:nagaochain}).

Denote by $V_n$ the Markov chain associated to the kernel $\bar{P}$. Given a word $\mathrm{u} \in \{e,\bar{e}\}^n$ of length $n \geq 2$, for $ s \in \{e,\bar{e}\}^2$ denote by $N_s(\mathrm{u})$ the number of occurrences of the word $s$ as a subword of $\mathrm{u}$. For $n \geq 2$, define the function $f_n$ on $\{ e,\bar{e} \}^n$ by $f_n(\mathrm{u})=N_{ee}(\mathrm{u})+N_{e\bar{e}}(\mathrm{u})-N_{\bar{e}e}(\mathrm{u})-N_{\bar{e}\bar{e}}(\mathrm{u})$. Denote by $Y_n$ the integer valued random variable $f_n(V_0,V_1,\ldots,V_{n-1})$. 

Now, it is readily observed that for every $i \in \mathbb{N}$ large enough so that $n_i\geq 16$ and every integer $j,m \geq 2$ with 
\begin{equation}\label{eq.interval.constraints}
\begin{aligned}
i+\sum_{k=1}^{i-1}n_k + \frac{n_i}{4}  \leq j \leq i+\sum_{k=1}^{i}n_k -  \frac{n_i}{4} \qquad \text{and} \qquad  m  \leq \frac{n_i}{8},
\end{aligned}
\end{equation}  we have
\begin{equation}\label{eq.express.with.micky}
\partial_0 {}_\ast \delta_{e_j}P^m = \partial_0 {}_\ast e_{j+Y_m}  \quad \text{in distribution.}
\end{equation}

Indeed, the inequalities \eqref{eq.interval.constraints} make sure that the starting edge $e_j$ in the $i$-th block is $n_i/4$ away from the boundary of the $i$-th block, so application of $P^m$ keeps the support of the distribution within $i$-th block and, therefore, transition probabilities at each step are given by \eqref{eq.b.transitions}. 
The relation with the auxiliary chain \eqref{eq.micky.transitions} is straightforward.

We now wish to use subgaussian concentration inequalities  for the Markov chain $V_m$ (e.g.~as discussed in \cite{dedecker-gouezel}). To this end, we note that being an aperiodic irreducible Markov chain with finite state space $\{e,\overline{e}\}$, $V_m$ is geometrically ergodic and the state space is a small set in the sense of \cite[Definition 0.1]{dedecker-gouezel}. Moreover, one checks by a simple calculation that $|\mathbb{E}_{s}[Y_m]|$ is bounded by a constant $C$ independently of $s \in \{e,\bar{e}\}$ and $m \in \mathbb{N}$. Finally, each function $f_m$ is clearly separately bounded in the sense of \cite[(0.1)]{dedecker-gouezel}. Now, it follows by \cite[Theorem 0.2, (0.7)]{dedecker-gouezel}) that there exists a constant $c'_0>0$ such that for all $m \in \mathbb{N}$, $s \in \{e,\overline{e}\}$ and $r \in \mathbb{N}$, we have
\begin{equation*}
\mathbb{P}_{s}(|Y_m| \leq C+r) \geq 1-2 e^{-c'_0\frac{r^2}{m}}.
\end{equation*}
Now using the relation \eqref{eq.express.with.micky} and slightly decreasing the constant $c_0'$ to $c_0$ (depending only on $C>0$), we deduce that for every $i \in \mathbb{N}$ large enough, $j,m \in \mathbb{N}$ as in \eqref{eq.interval.constraints} and $r \leq m$,  we have
\begin{equation}\label{eq.subgaussian.estimate}
\mathbb{P}_{e_{j}}(M_m \in \{e_{j-r},\ldots,e_{j+r},\bar{e}_{j-r},\ldots,\bar{e}_{j+r}\}) >1- 2 e^{-c_0\frac{r^2}{m}}.
\end{equation}

It follows immediately that if an initial distribution $\mu$ is supported on a set $S$ of edges $\{e_j\}$ which satisfy \eqref{eq.interval.constraints} for some $i$ large enough, then for $m$ as in \eqref{eq.interval.constraints} and $r\leq m$ we have
\begin{equation}\label{eq.subgaussian.support}
\mathbb{P}_{\mu}(M_m \text{ is in }r\text{-neighborhood of }S) >1- 2 e^{-c_0\frac{r^2}{m}}.    
\end{equation}


\textit{Third part: Showing the escape of mass.}
Now we construct points $x = g\Gamma_\alpha \in G/\Gamma_\alpha$ who, under horospherical group action, exhibit the dynamical behavior as described in the statement of Theorem \ref{thm.escape.and.equidist}.
For an edge $e\in EQ_\alpha$, denote by $|e| = d(e,x_0)$ the graph distance in $Q_\alpha$. In particular, $|e_j|=j$. 

Choose a sequence of edges $e(t) \subset EQ_\alpha$, such that for some $\beta \in (\frac{2}{3},1)$ we have
\begin{enumerate}
\item $e(0)=e_0$.
    \item $\partial_0 e_{t+1}= \partial_1 e(t)$.
    \item $\liminf_{t\to \infty} \frac{t^\beta}{|e(t)|} =  0.    $
    \item $|e(t)|=0$ for infinitely many $t\in \bbN$.
\end{enumerate}

The path $e(t)$ as above comes back infinitely often to $x_0$, but makes some longer and longer visits towards the cusp.
Now, choose a lift of $e(t)$ in $T_6$, starting at some basepoint $\tilde{o} \in VT$, which is the lift of $x_0$. Let $\tilde{o}=y_0, y_1, y_2,...$ be consecutive vertices converging to $\eta \in \partial T$. Let $g\in \Aut(T_6)$ be an automorphism such that $g^{-1}$ maps the edge $(y_i,y_{i+1})$ to the lift of the edge $e(i)$, and $x=g\Gamma_\alpha$.

Let $i_k$ and $t_{i_k}$ be increasing sequences of $\bbN$, such that $e(t_{i_k})$ is an edge, pointing toward the cusp, that is exactly in the middle of $i_k$-th block, namely
\[d_{i_k}:= |e(t_{i_k})| = i + n_1 + ... +n_{i_k-1} + \lfloor \frac{n_{i_k}}{2} \rfloor ,\]
and $t_{i_k}^\beta < c_1 |e(t_{i_k})|$ for some $c_1>0$. Such infinite subsequences exist by property (3) in the choice of $e(t)$.
In the notation above, $e_{d_{i_k}}=|e(t_{i_k})|$.


We shall now show that the sequence of measures given by the orbital averages $\nu_{x,t_{i_k}}$ converges weakly to $0$ as $k\to \infty$. 

By Lemma \ref{lem:nu_and_sigma*} and \eqref{eq5}, it suffices to show that the sequence $\delta_{e(t_{i_k})}P^{t_{i_k}}$ of distributions of the Markov chain $M_n$ converges weakly to zero. 
To do this, we would like to apply \eqref{eq.subgaussian.estimate} to show that after $t_{i_k}$-iteration most of the mass of the Markov chain stays in $i_k$-th block, which moves to the cusp in $Q_\alpha$ as $k \to \infty$. However, constraints $(\ref{eq.interval.constraints})$ are not satisfied since $t_{i_k}\geq d_{i_k} > n_{i_k}/8$. We remind that $n_{i_k} = \lfloor \alpha^{i_k} \rfloor$, $d_{i_k} \sim c_2\alpha^{i_k}$ and, thus  $t_{i_k} \leq  c_3 \left( \alpha^{i_k}\right)^{1/\beta} $ for some positive constants $c_2,c_3$.

To overcome this problem, we apply \eqref{eq.subgaussian.support} several times for a small number of allowed iterations, each time dismissing an exponentially small proportion of trajectories that move more than a distance $r_k$ to be chosen below.

Let $m_k =\lfloor \frac{n_{i_k}}{8} \rfloor \sim \frac{\alpha^{i_k}}{8}$. The number of times we wish to apply \eqref{eq.subgaussian.support} is bounded above by
\[ N_k =\lceil 8c_3 \alpha^{i_k(1/\beta -1)} \rceil \geq  \frac{t_{i_k} }{m_k}   \]
Set $r_k  = \lceil \frac{n_{i_k}/4}{N_k}  \rceil \sim (32c_3)^{-1} \alpha^{i_k(2-1/\beta)}$. The Markov property and choices of $m_k$ and $r_k$ allow us to repeatedly apply \eqref{eq.subgaussian.support} $N_k$ times (with $m=m_k$ and $r=r_k$), each time conditioning on trajectories that do not move more than $r_k$ in each $m_k$-iterate. We get that proportion of trajectories starting at $e(t_{i_k})$ that move at most $N_k \cdot r_k \leq n_{i_k}/4$ (in particular, do not leave $i_k$-th block) is at least
\[   \left( 1-2e^{-c_4\alpha^{i_k(3-2/\beta)}} \right)^{8c_2 \alpha^{i_k(1/\beta -1)}},  \]
for some constant $c_4>0$. The above tends to $1$ as $k\to \infty$, implying the escape of mass for $\nu_{x,t}$'s when the underlying F\o lner sequence is $F_t$. By \eqref{eq.folners1}, this also implies the escape of mass for $\nu_{x,t}$'s associated to any good F\o lner sequence.\\

\textit{Fourth part: Equidistribution.} Recall that by our choice of $x \in X$, there exists an increasing sequence $t_k \in \mathbb{N}$ such that $|e(t_k)|=0$ for every $k \in \mathbb{N}$. The equidistribution statement follows from the following technical but more general result. This completes the proof of Theorem \ref{thm.escape.and.equidist}. \end{proof}

\begin{proposition}
Let $G$ be a non-compact, closed, topologically simple subgroup of $\Aut(T)$ that acts transitively on $\partial T$. Let $\Gamma$ be a lattice in $G$, $\eta \in \partial T$ and $O_t$ a good F\o lner sequence in $G^0_\eta$. Let $g \in G$ and denote by $(\hat{e}(t))_{t \in \mathbb{N}}$ a sequence of consecutive edges in $T$ on a geodesic segment towards $g^{-1} \eta$. Assume that there exist a finite subset $F$ of $EQ$ and an increasing subsequence $t_k$ such that $\pi (\hat{e}(t_k))=:e(t_k) \in F$ for every $k \in \mathbb{N}$. Then for $x=g \Gamma \in X$, the orbital measures $\nu_{x,t_k}$ converge towards the Haar measure $m_X$ on $X$.
\end{proposition}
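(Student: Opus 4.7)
The plan is to adapt the proof of Theorem \ref{thm.equidist} to this more general, not necessarily geometrically finite, setting. The strategy consists of two parts: first, establish non-escape of mass for the sequence $\nu_{x,t_k}$ by exploiting the recurrence hypothesis $\pi(\hat{e}(t_k))\in F$ together with the Markov chain machinery of Section \ref{sec.rec}; then identify any weak-$\ast$ limit as $m_X$ via a Howe--Moore mixing argument modeled on the proof of Proposition \ref{prop.geofin.measure}.

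For the first part, by Remark \ref{rk.general} the reductions of Section \ref{sec.rec} hold for arbitrary tree lattices, so $\proj_*\nu_{x,t_k}=\pi_*\sigma_{t_k}$ can be written via \eqref{eq5} as a convex combination of push-forwards $\partial_{1*}(\delta_{e(t_k)}P^{t_k-1})$, where each edge $e(t_k)$ has $\pi(g^{-1}y_{t_k})$ as its initial vertex. The hypothesis $\pi(\hat{e}(t_k))\in F$ forces this initial vertex into the finite vertex set of $F$, so the edges $e(t_k)$ range over a finite subset of $EQ$. Combining positive recurrence of $M_n$ for general tree lattices (Proposition \ref{prop.pos.rec}) with irreducibility (Lemma \ref{lem:MCirred}) and aperiodicity of $P^2$ on each cyclic class (Lemma \ref{lemma.period}), after passing to a subsequence where the parity of $t_k$ is constant and the various edges $e(t_k)$ appearing in \eqref{eq5} are themselves constant, $\delta_{e(t_k)}P^{t_k-1}\to \mu_\Omega$ for the appropriate cyclic class $\Omega$. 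Hence $\pi_*\sigma_{t_k}\to \partial_{1*}\mu_\Omega$, a probability measure on $VQ$; compactness of $\proj$-fibers then yields non-escape of mass for $\nu_{x,t_k}$, and up to a further subsequence $\nu_{x,t_k}$ converges weakly to a $G_\eta^0$-invariant probability measure $m$ on $X$.

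For the second part, reduce to the F\o lner sequence $(F_t)$ via \eqref{eq.folners}--\eqref{eq.folners1} and set $s_k:=\lfloor t_k/2\rfloor$, $x_k:=a^{-s_k}x$. A direct computation gives $\proj(x_k)=\pi(g^{-1}a^{s_k}\tilde{o})$, and since the tree distance $d_T(a^{s_k}\tilde{o},y_{t_k})$ is bounded independently of $k$, $\proj(x_k)$ stays within a bounded distance in $Q$ of the finite vertex set of $F$. Local finiteness of $Q$ then forces $\{x_k\}$ to lie in a compact set $K_0\subset X$. Topological simplicity of $G$ yields the Howe--Moore property by \cite{burger-mozes.product}, and \cite[Lemma 6.3]{CFS} applied at the fixed base point $x$ along the sequence $s_k$ (for which $a^{-s_k}x\in K_0$) gives, for every $f\in C_c(X)$,
\[
\int_{F_{t_k}}\hat{f}(ux)\,dm_{F_{t_k}}(u)\longrightarrow \int_X \hat{f}\,dm_X,
\]
where $\hat{f}(z):=\int_M f(mz)\,dm_M(m)$. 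Combined with the $G_\eta^0$-invariance (hence $M$-invariance) of both $m$ and $m_X$, which implies $\int f\,dm=\int\hat{f}\,dm$ and likewise for $m_X$, this yields $\int f\,dm=\int f\,dm_X$ for every $f\in C_c(X)$, so $m=m_X$. The most delicate step is confirming that the hypotheses of \cite[Lemma 6.3]{CFS} apply with the specific sequence $s_k$ tied to the given $(t_k)$, but this is exactly the compact-return structure provided by $\{x_k\}\subset K_0$, so the argument should go through essentially verbatim from Proposition \ref{prop.geofin.measure}.
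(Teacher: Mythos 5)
The proposal is correct and follows essentially the same two-step strategy as the paper's proof: non-escape of mass via positive recurrence of the Markov chain (Proposition~\ref{prop.pos.rec}) together with the correspondences from Section~\ref{sec.rec} (Remark~\ref{rk.general}), then identification of the limit via the Howe--Moore property and \cite[Lemma 6.3]{CFS} using the compact-return times $a^{-n_k}x$ produced by the hypothesis $e(t_k)\in F$. The only superficial differences are cosmetic: you use $s_k=\lfloor t_k/2\rfloor$ while the paper uses $n_k=\lfloor t_k/\tau(a)\rfloor$ with a translation-length-$2$ element (the same thing), and you invoke bounded tree distance plus local finiteness of $Q$ where the paper phrases the same observation via $(g^{-1}a^ng)g^{-1}\tilde{o}$ tracking the geodesic toward $g^{-1}\eta$.
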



\begin{proof}
As before, fix a distinguished vertex $\tilde{o}$ in $T$ with respect to which the $\proj: G/\Gamma \to VQ$ map given by $\proj(h\Gamma)=\pi(h^{-1} \tilde{o})$ is defined. Let $g \in G$ be as in the statement. Since the Markov chain $M_n$ associated with the lattice $\Gamma$ is positive recurrent (Proposition \ref{prop.pos.rec}), it follows by \eqref{eq.folners1} and the correspondance established in Lemmas \ref{lem:nu_and_sigma*}, \ref{lemma.side.sphere} and \ref{lemma.mc.spheres} (see also Remark \ref{rk.general}) that $\proj_\ast \nu_{x,t_k}$ converges to a probability measure $\bar{m}$ on $EQ$. Since the map $\proj$ is proper, this implies that the sequence $\nu_{x,t_k}$ is tight so that any subsequence of $(\nu_{x,t_k})_{k \in \mathbb{N}}$ has a limit point and any limit point is a probability measure on $X$. Let $m$ be such a limit point along a subsequence that we also denote by $t_k$. Since $\nu_{x,t_k}$'s are orbital measures associated to a F\o lner sequence in $G^0_\eta$, the limit probability measure $m$ is $G^0_\eta$-invariant.

Now, fix a hyperbolic element $a \in G$ with attracting point $\eta \in \partial T$ and such that the translation axis of $a$ contains $\tilde{o}$. Let $n_k=\lfloor \frac{t_k}{\tau(a)} \rfloor$ where $\tau(a) \in \mathbb{N}$ denotes the translation length of $a$, so that $|\tau(a^{n_k})-t_k|$ is bounded. For every $k \in \mathbb{N}$, we have $\proj(a^{-n_k}g\Gamma)=\pi(g^{-1}a^{n_k} \tilde{o})=\pi((g^{-1}a^{n_k} g)g^{-1}\tilde{o})$. As $n \in \mathbb{N}$ varies, $(g^{-1}a^n g)g^{-1}\tilde{o}$ describes  vertices on the geodesic ray between $g^{-1} \tilde{o}$ and $g^{-1} \eta$. Therefore it follows by the hypothesis $e(t_k) \in F$ that for some larger finite set $F' \subset EQ$, we have $\proj(a^{-n_k}g\Gamma) \in F'$ for every $k \in \mathbb{N}$. Since the map $\proj$ has compact fibres, this entails that there exists a compact set $K \subset G/\Gamma$ such that 
\begin{equation}\label{eq.in.K}
a^{-n_k}g \Gamma \in K,    
\end{equation}
for every $k \in \mathbb{N}$. 
Furthermore, since such a group $G$ as in the statement enjoys the Howe--Moore property \cite{burger-mozes.product} (see also \cite{lubotzky-mozes}), the action of $a$ on $(X,m_X)$ is mixing so that we are in a position to apply \cite[Lemma 6.3]{CFS} as in the proof of Proposition \ref{prop.geofin.measure}. Now repeating the same argument as in the end of the proof of Proposition \ref{prop.geofin.measure} (i.e.\ \eqref{eq.howe.moore} and thereafter), one deduces that $m=m_X$ and this proves the proposition.
\end{proof}

\section{Limiting distributions of spheres in quotient graphs and lattice point counting}\label{sec.spheres}

This section is devoted to the proof of Theorems \ref{thm.dist.spheres} and \ref{thm.counting}   . Recall from  \S \ref{subsec:mc-def}, the irreducible Markov chain $M_n$ associated with a tree lattice $\Gamma$. We proved in Lemma \ref{lem:recur-geomfin} that it is positive recurrent when $\Gamma$ is a geometrically finite lattice. However, in Theorem \ref{thm.dist.spheres} general lattices are considered. Here we will prove that, more generally, $M_n$ is positive recurrent for all lattices. In order to do this we introduce another Markov chain that will serve as a tool to analyse further the chain $M_n$.

\subsection{Auxiliary chain}
Consider the Markov chain $\hat{M}_n$ on the state space $VQ$, given by the transition kernel $\hat{P}(v,w)=\frac{\ind((v,w))}{\deg(v)}$ if $(v,w) \in EQ$ and $0$ otherwise. Since the graph $Q$ is connected, $\hat{M}_n$ is irreducible.


Recall that $\tilde{o}\in VT$ is a fixed basepoint. Let $o=\pi(\tilde{o})\in VQ$. Consider the positive function $\mu:VQ \to \bbR_+$, given by $\mu(w)=\deg(w)N_o(w)^{-1} $, where $N_o(.)$ is as defined in \S \ref{subsec.edge.indexed}.

\begin{lemma}\label{lem:aux-recu}
The positive function $\mu$ defines a finite stationary measure on $VQ$ for $\hat{M}_n$. In particular, the Markov chain $\hat{M}_n$ is positively recurrent.
\end{lemma}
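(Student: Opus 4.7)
The plan is to verify stationarity of $\mu$ by a direct computation based on the cocycle property of the function $N_o$, then deduce finiteness from the volume formula \eqref{eq:volume-formula} together with the biregularity of $T$, and finally invoke a standard fact in Markov chain theory to conclude positive recurrence.

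For stationarity, I would unwind the definitions to write
\[ (\mu \hat P)(w) = \sum_{v : (v,w)\in EQ} \deg(v) N_o(v)^{-1}\, \frac{\ind((v,w))}{\deg(v)} = \sum_{v : (v,w)\in EQ} N_o(v)^{-1}\, \ind((v,w)), \]
and the goal is to identify this sum with $\deg(w) N_o(w)^{-1}$. The key algebraic input is the cocycle identity $N_o(w) = N_o(v) \Delta((v,w)) = N_o(v)\, \ind((w,v))/\ind((v,w))$ for any edge $(v,w)\in EQ$, which is immediate from the definition \eqref{eq.defn.N} by concatenating a path from $o$ to $v$ with the edge $(v,w)$; note that this is well-defined precisely because $(Q,\ind)$ comes from a discrete subgroup. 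Rearranging gives the reversibility-type relation
\[ N_o(v)^{-1}\, \ind((v,w)) = N_o(w)^{-1}\, \ind((w,v)), \]
and summing over neighbors $v$ of $w$ together with \eqref{eq:valency-gen} produces $\deg(w) N_o(w)^{-1} = \mu(w)$.

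For finiteness, I would use the fact that $\Gamma$ is a tree lattice in $\Aut(T)$, so $\deg(w) \in \{d_1, d_2\}$ for every $w \in VQ$ (as the degree in $Q$ equals the valency of any lift in the biregular tree $T$). Hence
\[ \mu(VQ) \leq \max(d_1, d_2)\, \sum_{w \in VQ} N_o(w)^{-1} = \max(d_1, d_2)\, \vol_o(Q, \ind) < \infty \]
by \eqref{eq:volume-formula}. Normalizing $\mu$ then yields a stationary probability measure for $\hat P$, and since $\hat M_n$ is irreducible (the graph $Q$ is connected and $\hat P(v,w) > 0$ whenever $(v,w) \in EQ$), standard Markov chain theory implies that $\hat M_n$ is positive recurrent and that the normalized $\mu$ is the unique stationary probability measure.

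There is no serious obstacle here; the only point requiring care is identifying the cocycle identity satisfied by $N_o$ and using it to rewrite the stationarity equation in a symmetric form, after which finiteness and positive recurrence are immediate.
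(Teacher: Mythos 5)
Your proposal is correct and follows essentially the same route as the paper: the key algebraic identity $N_o(v)^{-1}\,\ind((v,w)) = N_o(w)^{-1}\,\ind((w,v))$ that you derive from the cocycle property of $N_o$ is exactly the detailed balance (reversibility) relation $\mu(v)\hat{P}(v,w)=\mu(w)\hat{P}(w,v)$ that the paper verifies, and both use the volume formula \eqref{eq:volume-formula} together with the bound $\deg(w)\le\max\{d_1,d_2\}$ for finiteness. The only presentational difference is that the paper names and invokes reversibility up front, while you prove stationarity directly and encounter the same identity in the middle of the computation.
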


\begin{proof}
It suffices to check that $\mu$ has finite $l_1$-norm and is reversible, i.e.\ satisfies $\mu(w_1)\hat{P}(w_1,w_2)=\mu(w_2)\hat{P}(w_2,w_1)$ for all $w_1,w_2 \in VQ$. It is enough to consider pairs of neighbors $w_1,w_2 \in VQ$. Indeed, for such we have
$$\frac{\hat{P}(w_1,w_2)}{\hat{P}(w_2,w_1)}=\frac{\ind(w_1,w_2)}{\ind(w_2,w_1)}\frac{\deg(w_2)}{\deg(w_1)}=\frac{N_o(w_1)}{N_o(w_2)}\frac{\deg(w_2)}{\deg(w_1)}=\frac{\mu(w_2)}{\mu(w_1)}.$$ This shows that $\mu$ is a reversible measure on $VQ$. The fact that $\mu$ has finite $l_1$-norm is a direct consequence of the volume formula \eqref{eq:volume-formula}: 
$$
\sum_{w \in VQ} \mu(w)= \sum_w \deg(w)N_o(w)^{-1} \leq \max\{d_1,d_2\} \sum_w N_o(w)^{-1} < \infty.
$$
\end{proof}

\begin{remark}\label{rem:identify-mu}
Recall from \S \ref{subsec.edge.indexed} that $G$ is transitive on the set of vertices of $VT$ at even distance from $\tilde{o}$. The image of $\proj: G/\Gamma \to \Gamma \setminus T$ is the set of vertices at even distance from $o$. Moreover, from \eqref{eq:measure-of-po-fiber} it is clear that  $\proj_*m_X$ is proportional to the restriction of $\mu$ to the image of $\proj$. Hence, the measure $\mu$ can be thought of as the projection of Haar measure on $G/\Gamma$. 
\end{remark}

\subsection{Positive recurrence of the Markov chain $M_n$}\label{subsec:posrec}

First, we wish to relate the Markov chains $\hat{M}_n$ and $M_n$. Denote by $R_n$ the $n^{\text{th}}$-step of the  nearest-neighbor simple random walk on the vertices of the tree $T$ and by $\delta_{\tilde{o}} (R_n)$ its distribution when the initial vertex is $\tilde{o} \in VT$, i.e.\ a.s.\ $R_0=\tilde{o}$. Note that since $T$ is biregular, the restriction of $\delta_{\tilde{o}}(R_n)$ to the spheres $S(\tilde{o},m)$, for $m \leq n$, is a multiple of the uniform measure on $S(\tilde{o},m)$ which is denoted by $\rho_m$ as before. Let $D_{\tilde{o}}$ be the distribution on $EQ$ given by



\begin{equation}\label{eq.def.D}
    D_{\tilde{o}}:= \frac{1}{\deg(\tilde{o})} \sum_{(\tilde{o},\tilde{w})\in ET} \delta_{(o,\pi(\tilde{w}))}. 
\end{equation}  




\begin{lemma}\label{lemma.aux.main}
For any $n\geq 1$ we have
\begin{equation*}
\begin{aligned}
\delta_o \hat{P}^n&=\pi_\ast  \delta_{\tilde{o}}(R_n)\\
&=\mathbb{P}_{\tilde{o}}(R_n=\tilde{o})\delta_o+ \sum_{k=1}^n \mathbb{P}_{\tilde{o}}(d(R_n,\tilde{o})=k)
\partial_{1*}(D_{\tilde{o}} P^{k-1}).
\end{aligned} 
\end{equation*}
\end{lemma}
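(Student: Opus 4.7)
The proof will have two steps, one for each equality.

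For the first equality $\delta_o \hat{P}^n = \pi_*\delta_{\tilde{o}}(R_n)$, the plan is to observe that the pushforward under $\pi$ of the simple random walk $(R_n)$ on $VT$ is itself a Markov chain on $VQ$ with kernel $\hat{P}$. Indeed, for any $\tilde{v}\in VT$ with $\pi(\tilde{v})=v$, by the very definition of the index map, the number of neighbors of $\tilde{v}$ that project onto a given neighbor $w$ of $v$ in $Q$ equals $\ind((v,w))$, independently of the chosen lift $\tilde{v}$. Since $R_n$ chooses among $\deg(\tilde{v})=\deg(v)$ neighbors uniformly, we obtain
\[
\mathbb{P}(\pi(R_{n+1})=w\mid R_n=\tilde{v})=\frac{\ind((v,w))}{\deg(v)}=\hat{P}(v,w),
\]
which depends only on $v$. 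So $(\pi(R_n))$ is Markov with kernel $\hat{P}$ starting at $o$, and the first equality follows by iteration.

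For the second equality, I would decompose $\delta_{\tilde{o}}(R_n)$ according to the distance from the basepoint. The key observation is that, since $T$ is biregular, the stabilizer $\Aut(T)_{\tilde{o}}$ acts transitively on each sphere $S(\tilde{o},k)$, and the simple random walk is invariant under this action (it only sees degrees). Therefore the restriction of $\delta_{\tilde{o}}(R_n)$ to $S(\tilde{o},k)$ is proportional to the uniform measure $\rho_k$, so that
\[
\delta_{\tilde{o}}(R_n)=\sum_{k=0}^{n}\mathbb{P}_{\tilde{o}}(d(R_n,\tilde{o})=k)\,\rho_k.
\]
Applying $\pi_*$ and isolating the $k=0$ term (which simply yields $\mathbb{P}_{\tilde{o}}(R_n=\tilde{o})\,\delta_o$), it remains to identify $\pi_*\rho_k$ for $k\geq 1$ with $\partial_{1*}(D_{\tilde{o}}P^{k-1})$. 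But this is exactly formula \eqref{eq:sphere-sum_of_pacman} in Lemma \ref{lemma.mc.spheres} (with $n=k-1$), so the result follows.

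There is no serious obstacle; the only point that needs a little care is the claim that $\delta_{\tilde{o}}(R_n)$ restricted to each sphere is uniform. This could alternatively be argued directly by noting that, thanks to biregularity, the probability $\prod_{i=0}^{n-1}\deg(v_i)^{-1}$ of any fixed trajectory $\tilde{o}=v_0,v_1,\ldots,v_n$ depends only on the sequence of distances $(d(v_i,\tilde{o}))_{i}$, so two endpoints at the same distance from $\tilde{o}$ receive equal mass. Once this and the already-proved Lemma \ref{lemma.mc.spheres} are in hand, the computation reduces to a straightforward combination.
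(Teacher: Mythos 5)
Your proof is correct and follows essentially the same route as the paper's. For the first equality, the paper's one-line observation $\hat{P}(\pi(\tilde{x}),\pi(\tilde{y}))=R(\tilde{x},\pi^{-1}(\tilde{y}))$ is exactly the lumpability criterion you spell out; for the second, the paper invokes the same spherical decomposition of $\delta_{\tilde{o}}(R_n)$ (justified in the preamble by the biregularity observation you reprove) and then applies $\pi_*$ together with Lemma \ref{lemma.mc.spheres}, just as you do.
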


\begin{proof}
Let $R$ be the transition kernel for simple random walk on the tree.  For the first equality, one simply notes that $\hat{P}(\pi(\tilde{x}),\pi(\tilde{y}))=R(\tilde{x},\pi^{-1}(\tilde{y}))$.

The second equality follows from the fact that the distribution of $n^{th}$-step of nearest-neighborhood simple random walk on the tree is given by
\begin{equation}
    \delta_{\tilde{o}}(R_n)=  \mathbb{P}_{\tilde{o}}(R_n=\tilde{o}) \delta_{\tilde{o}} + \sum_{k=1}^n \mathbb{P}_{\tilde{o}}(d(R_n,\tilde{o})=k) \rho_k.
\end{equation}  
The statement follows after applying $\pi_*$ and Lemma \ref{lemma.mc.spheres}. 
\end{proof}

In other words, the distribution of the chain $\hat{M}_n$ starting from $v \in VQ$ is given by weighted average of distributions given by $M_k$ with $k\leq n$. We will use this relation to deduce the positive recurrence of $M_n$ from the positive recurrence of $\hat{M}_n$.

\begin{proposition}\label{prop.pos.rec}
The Markov chain $M_n$ is positive recurrent.  
\end{proposition}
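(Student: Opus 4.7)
Since $M_n$ is irreducible by Lemma \ref{lem:MCirred}, it suffices to exhibit a finite $P$-invariant measure on $EQ$; normalizing gives a stationary probability distribution and therefore positive recurrence. I will write one down explicitly, motivated by the auxiliary chain $\hat{M}_n$ and its stationary measure $\mu$ from Lemma \ref{lem:aux-recu}: namely, the ``edge-resolved'' version
$$
\nu(e) := \frac{\ind(e)}{N_o(\partial_0 e)} = \mu(\partial_0 e)\,\hat{P}(\partial_0 e, \partial_1 e), \qquad e \in EQ,
$$
obtained by sampling a vertex according to $\mu$ and then picking an outgoing edge according to $\hat{P}$. Finiteness is immediate: grouping by initial vertex gives $\sum_e \nu(e) = \sum_{v \in VQ} \deg(v)/N_o(v) \leq \max\{d_1,d_2\}\, \vol_o(Q,\ind)$, which is finite by \eqref{eq:volume-formula}.

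To verify stationarity, fix $f = (w,z) \in EQ$. The edges $e$ with $P(e,f) > 0$ are precisely those of the form $e = (v,w)$. Splitting the sum into the backtracking case $v = z$ (where $P(e,f) = (\ind(f)-1)/(\deg(w)-1)$) and the non-backtracking case (where $P(e,f) = \ind(f)/(\deg(w)-1)$), I get
$$
(\nu P)(f) = \frac{\ind(f)}{\deg(w)-1}\sum_{v:\,(v,w)\in EQ} \nu((v,w)) \;-\; \frac{\nu((z,w))}{\deg(w)-1}.
$$
The identity $\ind(v,w)/N_o(v) = \ind(w,v)/N_o(w)$, a direct consequence of the definition of $\Delta$ and $N_o$ in \eqref{eq.defn.N}, rewrites $\nu((v,w))$ as $\ind(w,v)/N_o(w)$; summing over $v$ yields $\sum_v \nu((v,w)) = \deg(w)/N_o(w)$, while the subtracted term becomes $\nu((z,w)) = \ind(f)/N_o(w)$. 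Substituting, the right-hand side collapses to $\ind(f)/N_o(w) = \nu(f)$, as required.

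The main obstacle is really only keeping this algebraic bookkeeping straight: the invariance of $\nu$ is essentially a detailed-balance-like consequence of the reversibility relation for $\mu$ already exploited in Lemma \ref{lem:aux-recu}, and the finiteness is precisely the volume condition that defines a tree lattice. A natural alternative, more in line with the stated plan of ``using the auxiliary chain to analyse $M_n$'', is a contradiction argument built on Lemma \ref{lemma.aux.main}: if $M_n$ were not positive recurrent, then $P^n(e,f) \to 0$ for all $e,f$, which combined with transience of the simple random walk $R_n$ on $T$ (ensured by $d_1,d_2 \geq 3$) would force $\delta_o \hat{P}^n(v) \to 0$ for every $v \in VQ$, contradicting Lemma \ref{lem:aux-recu}.
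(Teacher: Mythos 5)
Your main argument is correct. The measure $\nu(e)=\ind(e)/N_o(\partial_0 e)$ is indeed finite and $P$-stationary: the finiteness is the volume condition, and the stationarity computation goes through exactly as you wrote it, the key algebraic input being the relation $\ind(v,w)/N_o(v)=\ind(w,v)/N_o(w)$ (which is just the defining property of $N_o$ via $\Delta$). Note that $\nu$ is \emph{not} reversible for $P$ — a non-backtracking walk never is — but it is stationary, which is all you need.

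Your route differs from the paper's main proof in an instructive way. The paper does \emph{not} write down a stationary measure. Instead it exploits Lemma \ref{lemma.aux.main} together with the fact that the simple random walk on $T$ has positive drift $r>0$ (Kingman): because the distance $d(\tilde{o},R_k)$ concentrates near $kr$, the decomposition of $\delta_o\hat{P}^n$ as a mixture of distributions $\partial_{1*}(D_{\tilde{o}}P^{k-1})$ must, at a suitable sequence of times $n_k$, put mass $>1-2\epsilon$ inside a fixed finite set of edges, and this forces positive recurrence of $M_n$ given that of $\hat{M}_n$. Your construction is much closer in spirit to the paper's own Remark following the proposition, which produces the $P$-stationary measure on $EQ$ by descending a $G$-invariant $\tilde{P}$-stationary measure from $ET$ (using unimodularity of $\Aut(T)$); your $\nu$ is, up to normalization, precisely that measure, $\nu(e)\propto 1/|\Gamma_{\tilde{e}}|$, but you verify stationarity by a direct two-line computation instead of appealing to unimodularity. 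The explicit formula has the bonus of making the exponential-convergence discussion in \S\ref{subsec.effective} and the identification of the limit measures $\mu_j$ in Theorem \ref{thm.dist.spheres} more concrete.

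Your sketched ``contradiction via transience'' alternative also works in outline and is closer to the paper's own mechanism, though it needs the limit-interchange made careful (you must truncate the sum in Lemma \ref{lemma.aux.main} and use transience of $R_n$ for the small-$k$ tail and $\|\delta_e P^{k}\|\to 0$ uniformly on finite sets for the large-$k$ tail), and it quietly uses the standard fact that for an irreducible chain that is transient or null-recurrent one has $P^n(e,f)\to 0$ regardless of periodicity. You may as well keep only the direct stationary-measure argument, which is cleaner.
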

\begin{proof}

By Kingman's subadditive ergodic theorem, there exists $r\in \bbR$ such that $\frac{1}{k} d(\tilde{o},R_k ) \longrightarrow r$, $\mathbb{P}_{\tilde{o}}$-almost surely, hence also in measure,  as $k\to \infty$ (the value $r$ is called the drift of random walk $R_k$). 
Since $ \max \{d_1,d_2\} \geq 3$, it is easily seen that $r>0$.  Let $\varepsilon >0$. Then for all $k \in \mathbb{N}$ large enough, we have
\begin{equation}\label{eq1}
\mathbb{P}_{\tilde{o}}\left( | d(\tilde{o},R_k)-kr|>k\varepsilon\right) < \varepsilon
\end{equation}

By positive recurrence of the auxiliary chain $\hat{M}_n$ (Lemma \ref{lem:aux-recu}), there exists a finite subset $K_1$ of $VQ$ such that for every $n$ large enough, $\mathbb{P}_o (\hat{M}_n \in K_1)>1-\epsilon$. 

In view of Lemma \ref{lemma.aux.main} and $(\ref{eq1})$, we deduce that there exists a sequence $n_k \in \mathbb{N}$ with $|n_k-kr|\leq k\varepsilon$ such that for every $k$ large enough  
\begin{equation}\label{eq2}
\mathbb{P}(M_{n_k} \in \partial_1^{-1}K_1)>1-2\epsilon,
\end{equation}
which implies that the irreducible chain $M_n$ is positive recurrent.
\end{proof}

An alternative and more conceptual proof of Proposition \ref{prop.pos.rec} was kindly suggested to us by an anonymous referee. We discuss it in the following remark. As our proof above, it relies on the fact that the Markov chain $M_n$ can be seen as a quotient of the simple random walk on $ET$, the set of edges of the tree $T$.

\begin{remark}[Alternative proof of Proposition \ref{prop.pos.rec}]
Let $\tilde{P}$ be the Markov operator associated to the simple random walk on $ET$. Considering two successive edges $x$ and $y$ in $ET$, we have $ET=Gx \cup Gy \simeq G/G_x \cup G/G_y$, where $G_x$ and $G_y$ denote the respective stabilizers and $G=\Aut(T)$. Using this and the fact that $G$ is unimodular \cite[Proposition 6]{Amann}, one sees that $ET$ carries a $\tilde{P}$-stationary and $G$-invariant measure $\tilde{\nu}$. The restriction of $\tilde{\nu}$ to $Gx$ (respectively $Gy$) corresponds to the $G$-invariant measure on $G/G_x$ (respectively $G/G_y$). On the other hand, the Markov operator $P$ of the Markov chain $M_n$ on $EQ \simeq \Gamma \setminus ET$ can be seen as the restriction of $\tilde{P}$ to $\Gamma$-invariant functions on $ET$ and the associated quotient measure $\nu$ of $\tilde{\nu}$ gives a $P$-stationary measure on $EQ$. But since $\Gamma<G$ is a lattice and $\nu$ is given by the quotient measure on $\Gamma \setminus G/G_x \cup \Gamma \setminus G/G_y$, we have that $\nu$ is finite, as required.
\end{remark}

\subsection{Proof of Theorem \ref{thm.dist.spheres}}\label{subsec.proof.of.C}
Here we prove parts 1.\ and 2.\ of Theorem \ref{thm.dist.spheres}. Its third part about exponential equidistribution will be proven in \S \ref{subsec.effective}.

\bigskip

Given $\tilde{v} \in VT$, let $D_{\tilde{v}}$ the distribution on $VQ$ defined as in \eqref{eq.def.D}. By Lemma \ref{eq:sphere-sum_of_pacman}, $\pi_* \rho_n =\partial_{1*}(D_{\tilde{v}}P^{n-1})$. Hence, by Proposition \ref{prop.pos.rec}, there is no escape of mass for the sequence $\pi_\ast \rho_n$. This proves $(1)$ of Theorem \ref{thm.dist.spheres}.

If the irreducible and positive recurrent Markov chain $M_n$ has period $p \in \mathbb{N}$, then the sequence of distributions $D_{\tilde{v}}P^n$ have finitely many limit points $\{\mu_j\}_{j=0}^{p'-1}$, corresponding to all possible convex combinations with coefficients $1/\deg(\tilde{v})$ of the unique stationary probability measures of $M_n$ on each one of its cyclic classes (corresponding to the classes of Dirac measures constituting $D_{\tilde{v}}$). This implies the convergence along subsequences $pn+j$ and hence $(2)$ of Theorem \ref{thm.dist.spheres}.
\qed



\subsection{Exponential equidistribution of spheres in quotients by geometrically finite lattices}\label{subsec.effective}

Previously, we established positive recurrence of $M_n$, which is sufficient to prove the existence of limiting distributions of spheres in quotients of trees by action of tree lattices. However, in some cases, our Markov chain possesses a stronger property, namely that of geometric ergodicity. In these situations, the speed of convergence to the limiting distribution can be shown to be exponential and the exponential rate can even be made effective.

We begin by stating a version of Geometric Ergodic Theorem for Markov chains. Out of the equivalent definitions of geometric ergodicity, we conveniently choose one that uses the (Foster--Lyapunov) drift criteria. We then prove geometric ergodicity of the Markov chain $M_n$ associated to geometrically finite tree lattices  and discuss the application for exponential equidistribution of spheres.  We refer the reader to \cite{meyn-tweedie,DMPS} for more on geometric ergodicity.

Let $M_n$ be an irreducible, aperiodic and  positive recurrent Markov chain on a countable state space $S$ 
with the stationary probability measure $\mu$. Denote by $P$ the corresponding Markov operator. We call $M_n$ \textit{geometrically ergodic} if
there exists $r>1$ such that for all $x\in S$, we have
\begin{equation}\label{eq:geom-erg}
 \sum_{n \geq 0} r^n \|\delta_x P^n - \mu \| < \infty.  
\end{equation}
where $\|\cdot \|$ denotes the total variation norm. In particular, for a geometrically ergodic chain $M_n$, we have $\|\delta_x P^n - \mu \| = o(r^{-n})$ for every $x \in S$.
 
\begin{theorem} (Geometric Ergodic Theorem)
Let $M_n$ be an irreducible aperiodic Markov chain on a  countable state space $S$. Assume that there exist a finite set $K \subset S$, $b\in \bbR, \beta < 1$ and a function $V\geq 1$, which is finite at some $x_0\in S$ satisfying the \textit{drift criteria}:
    \begin{equation}\label{eq:driftcrit}
    PV(x) \leq \beta V(x) + b \mathbbm{1}_K(x), \quad \text{ for any } x\in X.
    \end{equation}
Then $M_n$ is geometrically ergodic.
 \end{theorem}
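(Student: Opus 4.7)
The plan is to follow the classical Foster--Lyapunov approach (see e.g.~\cite[Chapter 15]{meyn-tweedie}), whose skeleton consists of two core ingredients: exponentially integrable return times to the finite set $K$, and the small-set property of $K$. Combining these via a coupling/regeneration argument then yields geometric convergence in total variation.

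For the first ingredient, I would set $\tau_K := \inf\{n \geq 1 : M_n \in K\}$ and exploit the fact that outside $K$ the drift inequality reads $PV \leq \beta V$, so that $\beta^{-(n \wedge \tau_K)} V(M_{n \wedge \tau_K})$ is a non-negative supermartingale under $\mathbb{P}_x$ for $x \notin K$. A standard optional stopping and Fatou argument then delivers an estimate of the form $\mathbb{E}_x[\beta^{-\tau_K}] \leq C V(x) + C'$, where $C, C'$ depend only on $\beta$ and $b$; irreducibility together with the drift inequality propagates finiteness of $V$ from $x_0$ to the whole state space. For the second ingredient, the finiteness of $K$ combined with irreducibility and aperiodicity yields some $n_0 \in \bbN$, $\epsilon > 0$, and $y_0 \in S$ satisfying $P^{n_0}(x, \cdot) \geq \epsilon \delta_{y_0}(\cdot)$ for every $x \in K$, i.e.\ $K$ is a small set. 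Standard arguments (e.g.\ Kac's formula applied to $\tau_K$, whose $\mathbb{P}_x$-expectation is finite by Step 1) then guarantee the existence of a stationary probability measure $\mu$.

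Next, I would construct a coupling $(M_n, M_n')$ of two copies of the chain started from $\delta_x$ and $\mu$ respectively, letting them evolve independently until both lie in $K$, at which point they attempt to coalesce via the minorization $\delta_{y_0}$ supplied by Step 2. A geometric trials argument combined with the exponential tail of $\tau_K$ from Step 1 shows that the coupling time $T$ has a finite exponential moment, and the coupling inequality $\|\delta_x P^n - \mu\| \leq 2\, \mathbb{P}_x(T > n)$ then yields $\|\delta_x P^n - \mu\| = O(r^{-n})$ for some $r>1$, from which \eqref{eq:geom-erg} follows by summing the geometric series.

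The main obstacle is the rigorous bookkeeping of the coupling: the two chains must be driven jointly so that the minorization is applied consistently without destroying the marginal dynamics, and the regeneration structure has to be combined with the exponential return-time estimate in a quantitative way. The cleanest route is Nummelin's splitting technique, which enlarges the state space so as to create an accessible atom and reduces the problem to the elementary renewal-theoretic fact that an atomic chain with exponentially integrable return times to its atom is geometrically ergodic; alternatively, once both core ingredients are in place, one may invoke directly \cite[Theorem 15.0.1]{meyn-tweedie}.
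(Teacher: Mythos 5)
The paper does not prove this statement; it is quoted as a standard black-box fact, with \cite{meyn-tweedie,DMPS} cited for the theory and \cite{baxendale} for the explicit rate. There is therefore no proof in the paper to compare against, but your sketch is a correct rendition of the standard Foster--Lyapunov argument found in those references: the supermartingale $\beta^{-(n\wedge\tau_K)}V(M_{n\wedge\tau_K})$ does give $\mathbb{E}_x[\beta^{-\tau_K}]\leq V(x)$ for $x\notin K$, finiteness of $V$ propagates by iterating the drift bound along the irreducible skeleton, a finite set in an irreducible aperiodic countable chain is automatically $\nu_{n_0}$-small, and Nummelin's split chain (or a direct coupling) converts the exponential moment of $\tau_K$ into geometric total-variation decay. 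One small bookkeeping remark: your supermartingale argument is set up for $x\notin K$, so a one-step conditioning $\mathbb{E}_x[\beta^{-\tau_K}]=\beta^{-1}\sum_y P(x,y)\mathbb{E}_y[\beta^{-\tau_K}]$ is still needed to cover $x\in K$, where one uses $PV(x)\leq\beta V(x)+b$; this is routine and does not affect the conclusion. Given that the paper itself delegates this proof, there was no need to supply one, but what you wrote is sound and matches Meyn--Tweedie's Chapter 15--16 treatment.
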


Let us remark that the rate $r$ can be made explicit in terms of $\beta, K$; see \cite{baxendale} for the treatment of the constant $r$. Finally, aperiodicity hypothesis is only required to have a simple expression as in \eqref{eq:geom-erg}; if the Markov chain is not aperiodic, we shall still speak of geometric ergodicity if its restriction to its cyclic classes are.

\begin{lemma}\label{lemma:geom-erg}
Let $T$ be $(d_1,d_2)$-biregular tree with $d_1,d_2\geq 3$ and $\Gamma $ a geometrically finite tree lattice. Then the associated Markov chain $M_n$ is geometrically ergodic.
\end{lemma}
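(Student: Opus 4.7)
The plan is to verify the Foster--Lyapunov drift criterion \eqref{eq:driftcrit} and apply the Geometric Ergodic Theorem. Since $M_n$ has period $2$ by Lemma \ref{lemma.period} with cyclic classes $\Omega_0,\Omega_1$, it suffices to verify the criterion for $P^2$ restricted to each $\Omega_j$. A standard one-step iteration shows that if $PV\leq \beta V+b\,\mathbbm{1}_K$ holds on $EQ$ with $\beta<1$, $b>0$ and $K\subset EQ$ finite, then $P^2 V\leq \beta^2 V+b'\mathbbm{1}_{K'}$ holds on each $\Omega_j$ for some finite $K'$, so I focus on building $V\geq 1$ satisfying a multiplicative drift for $P$ itself.

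The construction is local on each Nagao ray attached to the finite part $F$. The key observation, already implicit in the proof of Lemma \ref{lem:recur-geomfin}, is that the chain acts deterministically on finite-oriented edges (walking one step closer to $F$) while on cusp-oriented edges it is a biased Bernoulli trial whose bias is governed by the $T$-degree of $\partial_1(e)$. When $d_1,d_2\geq 4$, the exponential $V(e)=(3/2)^{|e|}$ used in Lemma \ref{lem:recur-geomfin} already works: a direct calculation on each Nagao ray gives $PV(e)=\tfrac{2}{3}V(e)$ on finite-oriented edges and $PV(e)=\tfrac{4q+5}{6q}V(e)\leq \tfrac{17}{18}V(e)$ on cusp-oriented edges whose endpoint has $T$-degree $q+1\geq 4$. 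Outside $F$ this yields $PV\leq \tfrac{17}{18}V$; inside $F$, $PV$ is bounded, giving the required $b\,\mathbbm{1}_K$ term.

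When $\min(d_1,d_2)=3$ the naive exponential fails (the ratio $\tfrac{4q+5}{6q}$ exceeds $1$ at $q=2$), and I plan to use instead a piecewise-exponential $V(e)=C(e)\,r^{|e|}$ with $r>1$ and positive constants $C(e)$ depending on the orientation of $e$ and, for cusp-oriented edges, on the parity of $\partial_1(e)$ along the Nagao ray. Requiring the drift ratio $PV(e)/V(e)$ to equal $1/r$ uniformly on the two parities of cusp-oriented edges reduces to the linear system
\begin{equation*}
q_i C_i - r^2 C_{3-i}=q_i-1,\qquad i=1,2,
\end{equation*}
where $q_i=d_i-1$. Its solution is positive whenever $r^4<q_1q_2$, so since $q_1,q_2\geq 2$ one can find $r>1$ with $C_1,C_2>0$; for such $r$ sufficiently close to $1$ one also has $C_1,C_2\geq 1$, so $V\geq 1$. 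On finite-oriented edges the deterministic dynamics automatically give the same ratio $PV=V/r$. Near $F$, $PV$ is bounded and the extra $b\,\mathbbm{1}_K$ term absorbs the discrepancy.

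Combining the two regimes, the Geometric Ergodic Theorem applies to $P^2$ on each cyclic class, delivering the conclusion. The main difficulty is the choice of Lyapunov function in the low-degree case; once the piecewise-exponential ansatz is identified, the verification of the multiplicative drift reduces to the elementary algebraic condition $r^4<q_1q_2$.
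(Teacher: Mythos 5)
Your proof is correct, and it follows the same Foster--Lyapunov drift strategy as the paper: build a function $V\geq 1$ satisfying the multiplicative drift criterion outside a finite set, and invoke the Geometric Ergodic Theorem (passing to $P^2$ on cyclic classes to handle the period-$2$ issue, which you make explicit and which the paper leaves implicit via the remark following its statement of the theorem). The difference is in the choice of Lyapunov function, and here your version is actually more robust. The paper fixes $V(e)=2^{-|e|}q^{0.9|e|}$ on cusp-oriented edges and $V(e)=q^{0.1|e|}$ on finite-oriented edges (assuming $d_1=d_2$ with Nagao-ray bias $q$), and the key inequality reduces to $2^{|e|+1}\leq q^{0.8|e|}$; this holds for $q\geq 3$ but fails outright for $q=2$ (i.e.\ $d_1=d_2=3$), where the proposed $V$ actually \emph{decays} to zero along the ray. (The paper's written threshold ``$q\geq 4$'' combined with the defining typo ``$q=d_1+1$'' obscures this, but $q$ is the Nagao-ray bias $d_1-1$, so $q=2$ is precisely the boundary case the lemma is supposed to cover.) Your ansatz $V(e)=C(e)\,r^{|e|}$ with orientation- and parity-dependent constants, together with the requirement that $PV/V=1/r$ exactly along the ray, reduces the existence of a valid $V$ to the transparent algebraic condition $r^4<q_1q_2$, which is satisfiable for some $r>1$ whenever $q_1,q_2\geq 2$; your observation that $C_1(1)=C_2(1)=1$ with $C_i'(1)>0$ then guarantees $C_1,C_2\geq 1$ for $r$ close to $1$. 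This handles all $d_1,d_2\geq 3$ uniformly and transparently, so your construction is not just an alternative but actually closes a gap in the published argument.
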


\begin{proof}
Let $F$ be the compact part of $Q$. For convenience of notation, we will assume $d_1 = d_2$. Let $q=d_1+1$. 

Recall that for $e\in EQ$, $|e|=d(\partial_1(e), F)$. The edge $e$ is said to point toward the finite part if $d(\partial_1(e),F)>d(\partial_0(e),F)$.

We define the  function $V:EQ \to [1,\infty)$  by 

\[ 
V(e) = 
\begin{cases}
1 & \text{if } e\in EF, \\
q^{0.1 |e|} & \text{if } e \notin EF, \text{and points toward the finite part} \\
\frac{1}{2^{|e|}}q^{0.9 |e|}  & \text{otherwise}.
\end{cases}
\]

We claim that 
$V$ satisfies the drift criteria \eqref{eq:driftcrit} with $\beta = q^{-0.1}$ and $b=q^5$.

Recall that we have positive transition probabilities only among neighboring edges in $EQ$. For $e\in EQ \setminus EF$,  the edge $e$ belongs to a Nagao ray. 
If $e$ is oriented toward the finite part and $|e|>5$, $PV(e) = V(f)$, where $|f| = |e|-1$. Hence, 
\[  PV(e) = q^{0.1 (|e|-1)} \leq q^{-0.1} V(e). \]

If $e$ is oriented toward the cusp, the transition probabilities are $1/q$ to jump one step further away from $EF$ to edge pointing toward the cusp and $q-1/q$ to get one step closer and point toward the finite part (see Example \ref{ex:chain.on.nagao}). In other words, for each edge $e$ with $|e|>5$  we have

\begin{equation*}
\begin{aligned}
 PV(e) &= \frac{1}{q} \cdot \frac{ q^{0.9(|e|+1)}}{2^{|e|+1}}  + \frac{q-1}{q} \cdot q^{0.1(|e|-1)} \\
 &   \leq  \frac{1}{2} \cdot q^{-0.1} \cdot  \frac{q^{0.9|e|}}{2^{|e|}} + q^{0.1(|e|-1)} \\
& \leq q^{-0.1} \cdot  \frac{q^{0.9|e|}}{2^{|e|}}  =  q^{-0.1} V(e).
\end{aligned}
\end{equation*}

The last inequality holds since for any $q\geq 4$ and $|e|>5$
\[  q^{0.1(|e|-1)} \leq \frac{1}{2} \cdot q^{-0.1} \cdot  \frac{q^{0.9|e|}}{2^{|e|}}. \]

The lemma follows by letting $K$ be the finite set of  edges with $|e|\leq 5$ (this also contains $EF$).
\end{proof}

Finally, the description of limit measures $\mu_j$'s in the paragraph following the statement of Theorem \ref{thm.dist.spheres} follows from the proof \S \ref{subsec.proof.of.C} and Lemma \ref{lemma.period} which says that the period of the Markov chain $M_n$ is always two so that the Dirac masses constituting each distribution $D_{\tilde{v}}$ all belong to a single cyclic class.


\begin{remark}\label{rk.equidist.in.X}
In the context of homogeneous dynamics, inequalities of type \eqref{eq:driftcrit} are often referred to as Margulis inequalities. They were first used in the work of Eskin--Margulis--Mozes \cite{eskin-margulis-mozes} and Eskin--Margulis \cite{eskin-margulis}. After we completed the first version of this article, for horospherical averages on lattice quotients of real semisimple groups, using linear representations, Katz \cite{katz} proved Margulis inequalities to establish quantitative non-divergence of horospherical averages (as in Lemma \ref{lemma:geom-erg}). Combining this with a spectral gap, he also deduced an equidistribution result (as Theorem \ref{thm.equidist} but) with rate depending, among others, on certain diophantine parameters of the starting point $x \in G/\Gamma$ (cf.~Remark \ref{rk.intro.dioph}). For $\PSL_2(\mathbb{R})$-quotients, more precise estimates  were obtained earlier by Flaminio--Forni \cite{flaminio-forni} and  Str\"{o}mbergsson \cite{strombergsson}, exploiting, among others, (unitary) representation theory of $\PSL_2(\mathbb{R})$.
\end{remark}

\begin{remark}
We remark that the family of lattices for which the associated Markov chain is geometrically ergodic and, consequently, for which part 3.\ of Theorem \ref{thm.dist.spheres} holds, contains many non-geometrically finite lattices. For example, the lattice associated with the edge-indexed graph from Fig.~\ref{fig:2q-ray} is such an example, with similar Foster--Lyapunov function $V(x)$ to the one in proof of Lemma \ref{lemma:geom-erg}. 
\end{remark}

\subsection{Proof of Theorem \ref{thm.counting}}
Let $\Gamma$ be a geometrically finite lattice in $\Aut(T)=:G$, denote by $m$ a Haar measure on $G$ and let $m_X$ be the induced $G$-invariant finite measure on $G/\Gamma$ by choice of a Borel fundamental domain in $G$. Denote by $S_T(R)$ the cardinality of the sphere of radius $R$ around $\tilde{o}$ in $T$ and $o= \pi(\tilde{o})$. As before, $\pi$ is the natural projection $VT \to VQ$ and $\rho_n$ denotes the normalized probability measure on the sphere of radius $n$ on $VQ$ with center $o$. Recall that $G$ has precisely two orbits on $VT$  and it acts transitively on the set of vertices of $T$ that are of even distance to each other, so that for every $\gamma \in \Gamma$, $2| d(\gamma \tilde{o},\tilde{o})$. For every $R \in \mathbb{N}$, we have
\begin{equation}\label{eq.rewrite.number}
 N(2R) = \sum_{n\leq R} S_T(2n) \cdot \pi_* \rho_{2n}(o)  \cdot | \Gamma \cap G_{\tilde{o}}| .    
\end{equation}

Thanks to $(3)$ of Theorem \ref{thm.dist.spheres} (see also the paragraph following that theorem), for some constant $r>1$, we have

\begin{equation}\label{eq.sphere.to.m}
|\pi_\ast \rho_{2n}(o)-\frac{1}{m_X(X)} \proj_\ast m_X(o)|=o(r^{-n}).
\end{equation}

On the other hand, in \eqref{eq.sphere.to.m}, the term $\proj_\ast m_X(o)$ can be rewritten as:
\begin{equation}\label{eq.rhs}
\proj_\ast m_X(o)=m_X(\proj^{-1}(o))=m_X(G_{\tilde{o}}\Gamma)=\frac{1}{|G_{\tilde{o}} \cap \Gamma|} m(G_{\tilde{o}}).
\end{equation}
Plugging \eqref{eq.rhs} and \eqref{eq.sphere.to.m} in \eqref{eq.rewrite.number} yields the desired statement.

To see the alternative expression of the main term $\frac{m(G_{\tilde{o}})}{m_X(X)}$ as expressed after the statement of Theorem \ref{thm.counting}, observe first that it follows by unimodularity of $\Aut(T)$ that for any two vertices $\tilde{v},\tilde{w} \in VT$ with $2 | d(\tilde{v},\tilde{w})$, we have $m(G_{\tilde{v}})=m(G_{\tilde{w}})$. Now fixing a lift $\tilde{v}$ for every $v \in VQ$ with $2 | d(o,v)$ and an element $g_v$ such that $g_v \tilde{v}= \tilde{o}$, we have

\begin{equation}\label{eq.one.is}
\begin{aligned}
m_X(X)& = \sum_{\underset{2| d(v,o)}{v \in VQ}} m_X(\proj^{-1}(v))=\sum_{\underset{2| d(v,o)}{v \in VQ}}m_X(G_{\tilde{o}}g_v \Gamma)=\sum_{\underset{2| d(v,o)}{v \in VQ}}m_X(G_{\tilde{v}}\Gamma)\\ &= \sum_{\underset{2| d(v,o)}{v \in VQ}} \frac{1}{|\Gamma \cap G_{\tilde{v}}|}m(G_{\tilde{v}})=m(G_{\tilde{o}})\sum_{\underset{2| d(v,o)}{v \in VQ}} \frac{1}{|\Gamma \cap G_{\tilde{v}}|}
\end{aligned}
\end{equation}
and get that the main term $\frac{m(G_{\tilde{o}})}{m_X(X)}$ is equal to $(\sum_{\underset{2| d(v,o)}{v \in VQ}} \frac{1}{|\Gamma \cap G_{\tilde{v}}|})^{-1}$.

\end{document}